\documentclass[preprint,11pt]{elsarticle}

\usepackage[utf8]{inputenc}      % source file encoding
\usepackage[T1]{fontenc}         % correct hyphenation & accents
\usepackage[english]{babel}

\usepackage{microtype}           % subtle kerning & protrusion

\usepackage{amsmath,amssymb,amsfonts}
\usepackage{stmaryrd}            % extra brackets (⟦ ⟧)
\usepackage{yfonts}              % ℳ, �� …
\usepackage{mathrsfs}
\usepackage{esint}
\usepackage{algorithm}      % floating environment
\usepackage{algpseudocode}
\usepackage{booktabs}
\usepackage{adjustbox}  
\usepackage{soul}
\usepackage{tikz-cd}
\usepackage{hyperref}
\usepackage{amsthm}

\theoremstyle{definition}

\theoremstyle{remark}

\usepackage{graphicx,epstopdf}
\usepackage{pgfplots}
\pgfplotsset{compat=newest}

\usepackage{booktabs,multirow,array,diagbox}
\usepackage{tabularx}   % <- needed for the X & Y column types
\usepackage{colortbl}
\usepackage{caption}
\usepackage[margin=3cm]{geometry}
\usepackage{pdflscape}

\usepackage{enumitem}
\setlist[itemize]{label=–,leftmargin=2em}

\usepackage[dvipsnames]{xcolor}
\usepackage[colorinlistoftodos]{todonotes}
\usepackage[most]{tcolorbox}

\usepackage{url} 
\usepackage[nameinlink,capitalise]{cleveref}
\crefname{equation}{equation}{equations}
\crefname{theorem}{theorem}{theorems}
\crefname{lemma}{lemma}{lemmas}
\crefname{proposition}{proposition}{propositions}
\crefname{corollary}{corollary}{corollaries}
\crefname{definition}{definition}{definitions}
\crefname{example}{example}{examples}
\crefname{remark}{remark}{remarks}

\usepackage[displaymath,mathlines]{lineno}
\usepackage{standalone}
\usepackage{subfiles} 
\usepackage{comment}

\RequirePackage{bm}        % better \boldsymbol for Greek etc.
\RequirePackage{tabularx}  % gives the X column used below

\newcommand{\vertiii}[1]{%
  \left\vert\kern-0.25ex\left\vert\kern-0.25ex\left\vert
  #1
  \right\vert\kern-0.25ex\right\vert\kern-0.25ex\right\vert}

\newcolumntype{Y}{>{\centering\arraybackslash}X}

\newtheorem{prop}{Proposition}[]
\newtheorem{asm}{Assumption}[]
\journal{CMAME}

\begin{document}

\begin{frontmatter}

%% Title, authors and addresses

%% use the tnoteref command within \title for footnotes;
%% use the tnotetext command for theassociated footnote;
%% use the fnref command within \author or \affiliation for footnotes;
%% use the fntext command for theassociated footnote;
%% use the corref command within \author for corresponding author footnotes;
%% use the cortext command for theassociated footnote;
%% use the ead command for the email address,
%% and the form \ead[url] for the home page:
%% \title{Title\tnoteref{label1}}
%% \tnotetext[label1]{}
%% \author{Name\corref{cor1}\fnref{label2}}
%% \ead{email address}
%% \ead[url]{home page}
%% \fntext[label2]{}
%% \cortext[cor1]{}
%% \affiliation{organization={},
%%             addressline={},
%%             city={},
%%             postcode={},
%%             state={},
%%             country={}}
%% \fntext[label3]{}

\title{Multiscale Mixed-Dimensional Simulation via Domain Decomposition and Non-Intrusive Neural Model Order Reduction}

%% use optional labels to link authors explicitly to addresses:
%% \author[label1,label2]{}
%% \affiliation[label1]{organization={},
%%             addressline={},
%%             city={},
%%             postcode={},
%%             state={},
%%             country={}}
%%
%% \affiliation[label2]{organization={},
%%             addressline={},
%%             city={},
%%             postcode={},
%%             state={},
%%             country={}}

\author[label1]{Nunzio Dimola}
\author[label1]{Piermario Vitullo}
\author[label1]{Paolo Zunino} %% Author name

%% Author affiliation
\affiliation[label1]{organization={MOX, Department of Mathematics, Politecnico di Milano},%Department and Organization
            %addressline={}, 
            city={Milan},
            %postcode={}, 
            %state={},
            country={Italy}
            }

%% Abstract
\begin{abstract}
%% Text of abstract
Many computational models arising in science and engineering exhibit a multiscale structure that makes the assembly or direct solution of the global problem computationally prohibitive. Domain Decomposition (DD) methods overcome this limitation by replacing the global problem with a sequence of coupled local problems, whose iterative solution reconstructs the global response. This work introduces a method in the family of Domain Decomposition Reduced Order Models (DD-ROMs), based on the observation that DD naturally localizes not only the solution operator but also its geometric and parametric dependence. The central idea is that DD transforms a globally intractable solution map into a family of locally representable operators learnable from affordable local data after identification with a common reference configuration, a concept that we formalize through the notion of \textit{local representability}.

Non-intrusive neural surrogates are then trained to approximate the fine-scale local operations and embedded into the iterative solver. The training algorithm is based on a cascaded strategy designed to match the distributions encountered by the deployed surrogate iteration. We interpret the resulting DD method as a perturbed fixed-point iteration and establish that the global error remains bounded by the surrogate approximation error. The framework is instantiated for mixed-dimensional elliptic problems coupling three-dimensional bulk domains with embedded one-dimensional inclusions, using a two-level non-overlapping Robin–Robin method. Numerical experiments show that the resulting DD-ROM is stable, achieves accurate approximation on unseen microscale geometries and features good scalability properties with respect to the number of subdomains, scaling to large size global problems while avoiding fine-scale operator assembly and local high-fidelity solvers in the online stage.
\end{abstract}

%%Graphical abstract
%\begin{graphicalabstract}
%\includegraphics{grabs}
%\end{graphicalabstract}

%%Research highlights
%\begin{highlights}
%\item Research highlight 1
%\item Research highlight 2
%\end{highlights}

%% Keywords
%\begin{keyword}
%% keywords here, in the form: keyword \sep keyword

%% PACS codes here, in the form: \PACS code \sep code

%% MSC codes here, in the form: \MSC code \sep code
%% or \MSC[2008] code \sep code (2000 is the default)

%\end{keyword}

\end{frontmatter}

%% Add \usepackage{lineno} before \begin{document} and uncomment 
%% following line to enable line numbers
%% \linenumbers

%% main text
%%

\section{Introduction}
\label{sec:introduction}

Many problems in science and engineering involve localized fine-scale structures interacting through a much larger surrounding domain. Examples include fractured porous media \cite{Peaceman1978183,jaffre2012modeling}, plant root systems \cite{koch2018new,Schroder2012}, vascularized biological tissues \cite{Possenti2019,Possenti20213356,vidotto2019hybrid,Kremheller2019}, and composite materials \cite{Khristenko2021,Firmbach2023,EDELVIK2024116593}. Their numerical simulation leads to extremely large multiscale systems whose repeated solution is computationally prohibitive. In many cases, neither globally resolved training simulations nor the assembly of the global fine-scale operator are computationally affordable.

Several complementary methodologies have been developed to address this challenge. Multiscale discretization techniques, including homogenization \cite{bensoussan2011asymptotic}, multiscale finite element methods \cite{efendiev2009multiscale}, heterogeneous multiscale methods \cite{cms/1118150402}, localized orthogonal decomposition \cite{Henning2014A1609}, and numerical upscaling strategies \cite{EDELVIK2024116593}, derive effective coarse-scale models while retaining the influence of unresolved fine structures. Projection-based model order reduction \cite{benner2015survey,quarteroni2015reduced} instead exploits the low-dimensional structure of solution manifolds to accelerate repeated simulations. More recently, data-driven and operator-learning approaches have demonstrated the ability to approximate solution operators directly from data \cite{doi:10.1142/S0218202525500125,Berner,kovachki2024operator,boulle2024mathematical}. Despite their differences, these approaches generally rely on either problem-specific multiscale constructions or on globally resolved high-fidelity datasets generated during an expensive offline stage. For very large multiscale systems, this offline cost may itself become the dominant computational bottleneck.

Domain decomposition provides a natural mechanism to shift the reduction task from the global system to a family of local problems \cite{quarteroni2017domain,Toselli2005}. This idea is well established in localized and component-based model reduction. Classical approaches construct reduced spaces for subdomains or reusable archetype components and couple them through Lagrange multipliers, static condensation, reduced interfaces, discontinuous formulations, optimization principles, or Schwarz-type iterations. A recent survey \cite{RuanClassRozza2026} organizes these methods, generally called DD-ROMs into intrusive projection-based and non-intrusive data-driven classes and emphasizes the central roles of spatial localization, local parameterization, local basis construction, and interface coupling. Examples of these ideas are for instance provided in \cite{DiscacciatiHesthaven2023,TADDEI2024113038,FarcasGundeviaMunipalliWillcox2024}, showing that the combination of domain decomposition and local model reduction is an active research direction.
More generally, the rise of scientific machine learning is fostering an increasingly close interaction between domain-decomposition techniques and data-driven modeling, leading to hybrid reduced-order frameworks that combine algorithmic structure with learned local representations \cite{heinlein2021combining}.

The contribution of the present work lies in an operator-level realization of DD-ROM designed for multiscale mixed-dimensional problems. We decompose the fine-scale operations entering a two-level non-overlapping Robin--Robin iteration and approximate them by introducing three local surrogates. These are the transmission map \(T_{\rm rom}\), the local solution map \(S_{\rm rom}\), and the local contribution \(C_{\rm rom}\) to the coarse residual.
%The local solution surrogate \(S_{\rm rom}\) combines a proper-orthogonal-decomposition space with a mesh-informed nonlinear closure driven by the local microscale descriptor. The resulting approximation can be interpreted as a multiscale enrichment of a common POD space. 
%Their combined use replaces all fine-scale operator evaluations required by the online domain decomposition iteration. 
%We refer to this particular two-level, operator-surrogate construction as the multiscale DD-ROM framework (MS-DD-ROM).
%MS-DD-ROM exhibits both a non-intrusive nature and scalability property. 
%A surrogate for the local subproblem alone would still require high-fidelity evaluations of interface transmission quantities and coarse-residual contributions during the online phase. By learning these additional local algorithmic maps, the proposed method removes the remaining dependence on fine-scale local operators. At the same time, retaining an explicit coarse correction preserves a global communication mechanism whose role becomes increasingly important as the number of subdomains grows. 
Their combined use replaces all fine-scale operator evaluations required by the online domain-decomposition iteration. A surrogate for the local subproblem alone would still require high-fidelity evaluations of interface transmission quantities and coarse-residual contributions during the online phase. By learning these additional local algorithmic maps, the proposed method removes the remaining dependence on fine-scale local operators. At the same time, retaining an explicit coarse correction preserves a global communication mechanism, whose role becomes increasingly important as the number of subdomains grows. The resulting approach is both non-intrusive and scalable. 

We refer to this two-level operator-surrogate construction as the multiscale DD-ROM framework, or MS-DD-ROM.

Localization also changes the nature of the parameter space. Each local problem depends on physical and geometric information, including the embedded one-dimensional microstructure, as well as on interface data generated by the iterative solver. The localized parameter is therefore partly physical and partly algorithmic, and its distribution evolves when exact local operators are progressively replaced by learned ones. This motivates the \textit{local representability} viewpoint developed in this paper: after mapping all subdomains to a common reference configuration, the corresponding local solution maps are assumed to belong to a common parametrized family. It also motivates a cascaded training strategy in which the surrogates are introduced sequentially and each successive model is trained on states generated by the already deployed approximations.

Replacing the exact local operators by neural surrogates perturbs the fixed-point iteration underlying the domain decomposition method. We address this issue by interpreting the MS-DD-ROM as an inexact contraction and by relating the asymptotic global error to the contraction factor of the exact method and to the local surrogate errors. This analysis provides the conceptual link between local approximation quality, the training distribution, and the stagnation level of the global iteration.

The framework is instantiated for a class of mixed-dimensional elliptic problems coupling a three-dimensional continuum with an embedded one-dimensional network. The three-dimensional problem is solved by a two-level non-overlapping Robin--Robin method, while the one-dimensional structure enters the local parameterization and drives the microstructure-informed surrogate models. Although this setting is motivated by oxygen transport and perfusion in vascularized tissue \cite{Possenti20213356}, the methodology is formulated at an abstract level and is applicable more broadly to parametrized multiscale PDEs admitting reusable local decompositions.

\section{A localized learning framework for multiscale parametrized PDEs}\label{sec:MultiScaleAbstract}
%\section{\textit{Global} multiscale simulation via \textit{local} ROMs}

\medskip\noindent
We consider a particular family of problems governed by parametrized PDEs whose coefficients and source terms exhibit a \emph{small-scale structure (or microscale)}, namely fine heterogeneities that induce the coexistence of multiple characteristic length scales in the solution. 
Let $\Omega \subset \mathbb{R}^d$ denote the \emph{global domain (or macroscale)}, with a characteristic size $L = \operatorname{diam}(\Omega)$, encompassing a structure $\Lambda$ characterized by a \emph{local feature}, that is, a geometric variation occurring on a smaller scale $\ell \ll L$.

Our aim is to develop an efficient solution strategy capable of accurately capturing both the global and local features of the solution in a parametrized setting. We define a parametrized problem $\mathfrak{p}^\mu$ posed in the two-scale configuration $(\Omega, \Lambda)$, where the parameter $\mu \in \mathcal{P}$ encodes the variability induced by the underlying structures $\Lambda$, represented by a parameterization associated with the parameter space $\mathcal{P}$. We focus on steady elliptic problems of the form: 
\begin{equation}\label{eq:split-param}
\textrm{find $u^\mu \in \mathcal{V}(\Omega)$ such that} \
\mathfrak{p}^\mu(u^\mu) = 0.
\end{equation}
At fixed \(\mu\in\mathcal P\), the corresponding solution map is defined as
\(\mathfrak s:\mathcal P\to \mathcal V(\Omega),
\ \mathfrak s(\mu)=u^\mu\).

The global discretization of~\eqref{eq:split-param} generally requires resolving the fine-scale structures throughout $\Omega$, making both the assembly of the discrete operator and the repeated solution of the resulting high-fidelity system prohibitively expensive. The purpose of this section is therefore to formulate an abstract framework in which the global solution operator is replaced by a collection of localized operators that can be learned from affordable local simulations.

\subsection{Localized operator decomposition}
\label{sec:localized_operator_decomposition}

Domain decomposition replaces the direct evaluation of the global solution map by an iterative composition of local parametrized operators coupled through interface conditions. Let
\(
\Omega=\cup_{i\in\mathcal I}\Omega_i,
\
\mathcal I:=\{1,\ldots,N_{\rm sub}\},
\)
be a partition of the computational domain into non-overlapping subdomains. At an abstract level, the corresponding domain decomposition iteration is written as
\(
u^{(k+1)}
=
\mathcal{DD}^{\mu}\!\left(u^{(k)}\right),
\
k=0,1,\ldots,
\)
where
\(
\mathcal{DD}^{\mu}:\mathcal V(\Omega)\rightarrow\mathcal V(\Omega)
\)
denotes the iteration operator associated with the global problem~$\mathfrak p^\mu$. Under standard assumptions on the transmission conditions, the fixed point of $\mathcal{DD}^{\mu}$ coincides with the solution
\(
u^\mu=\mathfrak s(\mu).
\)

The action of $\mathcal{DD}^{\mu}$ is completely determined by a collection of local solution operators. Denoting by
\(
u_i^{(k)}:=u^{(k)}|_{\Omega_i}
\)
the local state on $\Omega_i$, the update is obtained by solving a localized problem depending on a local parameter
\(
\widehat\mu_i^{(k)},
\)
which contains both the restriction of the physical data and the interface information generated by the current DD iteration. The corresponding exact local solution operator is written as
\(
u_i^{(k+1)}
=
\mathfrak s_i\!(\widehat\mu_i^{(k)}),
\ 
i\in\mathcal I,
\)
and we denote the associated localized problem by
\(
\mathfrak p_i^{\widehat\mu}.
\)

The central idea of this work is to replace these exact local operators by surrogate models,
\(
\widetilde u_i^{(k+1)}
=
\widetilde{\mathfrak s}_i
\!(\widehat\mu_i^{(k)}),
\)
while retaining the global DD iteration as the computational framework,
\(
\widetilde u^{(k+1)}
=
\widetilde{\mathcal{DD}}^\mu
\!(\widetilde u^{(k)}),
\)
where $\widetilde{\mathcal{DD}}^\mu$ is obtained by replacing the exact local operators
$\mathfrak s_i$
with their surrogate counterparts
$\widetilde{\mathfrak s}_i$.
Domain decomposition therefore plays a dual role: it provides the localization required to generate affordable high-fidelity training data, and it supplies the iterative computational framework in which the learned operators are subsequently deployed.

\begin{figure}[h!]
    \centering
    \includegraphics[width=0.9\linewidth]{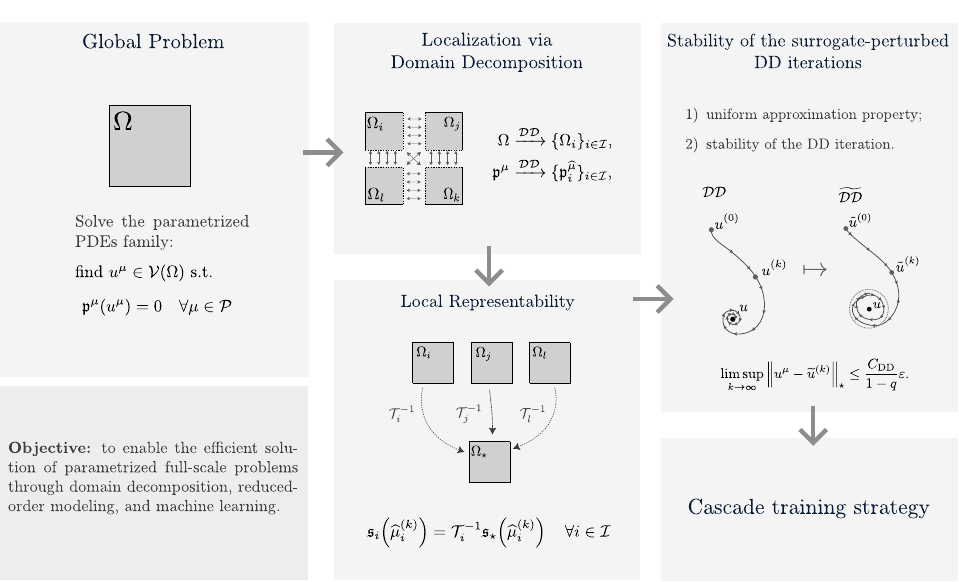}
    \caption{Schematic representation of the proposed framework. Domain decomposition localizes the global multiscale problem into reusable local operators, which are learned on a common reference configuration and subsequently embedded into the global iterative solver.}
    \label{fig:placeholder}
\end{figure}

This construction raises two fundamental questions addressed in the remainder of this section. First, replacing exact local operators by surrogate models perturbs the underlying DD iteration, requiring an analysis of the resulting convergence and stability properties. Second, although localization makes the generation of training data computationally affordable, the corresponding local parameter space must represent not only the variability of the physical configurations but also the interface states generated during the DD iteration. This motivates the introduction of an enlarged localized parameter space $\widehat{\mathcal P}$ and the notion of \emph{local representability}, which constitute the foundation of the proposed MS-DD-ROM framework.

\subsection{Representability on a reference local problem}
\label{sec:local_representability}

The localization of the solution operator naturally induces a localization of its parameter dependence. Unlike the global problem, however, a local subproblem is not completely characterized by the restriction of the global parameter $\mu\in\mathcal P$. It also depends on the interface information generated during the DD iteration. Consequently, each local problem is identified by a \emph{localized parameter}
\(
\widehat\mu_i^{(k)}
=
(\lambda_i,\nu_i^{(k)})
\in
\widehat{\mathcal P},
\)
where $\lambda_i$ describes the local physical configuration (geometry, coefficients, source terms and fine-scale structure), while $\nu_i^{(k)}$ collects the algorithmic quantities exchanged with neighboring subdomains at iteration $k$. The localized parameter space $\widehat{\mathcal P}$ therefore extends the original physical parameter space $\mathcal P$ by incorporating the algorithmic states generated by the DD iteration.

A second consequence of localization is that all local problems become geometrically similar. We therefore introduce a reference domain
\(\Omega_\star=(0,1)^d\),
and assume that each subdomain $\Omega_i$ is obtained from $\Omega_\star$ through a bijective mapping
\(
\Phi_i:\Omega_\star\rightarrow\Omega_i.
\)
 The corresponding pullback
\(
\mathcal T_i:\mathcal V(\Omega_i)\rightarrow\mathcal V(\Omega_\star),
\
\mathcal T_i v=v\circ\Phi_i,
\)
transports every local solution to the reference domain.

This leads to the following notion.

\begin{asm}[Local representability]
\label{asm:local_representability}
A family of parametrized problems
\(
\{\mathfrak p^\mu:\mu\in\mathcal P\}
\)
is said to satisfy the \emph{local representability property} if there exist:
\textit{(i)} a reference domain $\Omega_\star$;
\textit{(ii)} a localized parameter space $\widehat{\mathcal P}$;
\textit{(iii)} a reference family of local problems
\(
\{\mathfrak p_\star^{\widehat\mu}:
\widehat\mu\in\widehat{\mathcal P}\}
\),
with associated solution operator
\(
\mathfrak s_\star:
\widehat{\mathcal P}
\rightarrow
\mathcal V(\Omega_\star),
\)
such that, for every subdomain $i$, every DD iteration $k$, and every global parameter $\mu\in\mathcal P$, there exists a localized parameter
\(
\widehat\mu_i^{(k)}
\in
\widehat{\mathcal P}
\)
satisfying
\[
\mathcal T_i
\mathfrak s_i\!\left(\widehat\mu_i^{(k)}\right)
=
\mathfrak s_\star\!\left(\widehat\mu_i^{(k)}\right)
\ \textrm{or equivalently} \
\mathfrak s_i\!\left(\widehat\mu_i^{(k)}\right)
=
\mathcal T_i^{-1}
\mathfrak s_\star\!\left(\widehat\mu_i^{(k)}\right).
\]
\end{asm}

The local representability property states that every local problem arising during the DD iteration can be interpreted as one realization of a single reference parametrized problem posed on $\Omega_\star$. Consequently, instead of approximating a different solution operator on each subdomain, one can learn a unique surrogate
\[
\widetilde{\mathfrak s}_\star:
\widehat{\mathcal P}
\rightarrow
\mathcal V(\Omega_\star),
\]
which is subsequently reused throughout the DD iteration after pullback to the physical subdomains. The effectiveness of the proposed MS-DD-ROM framework therefore relies on constructing a localized parameter space $\widehat{\mathcal P}$ sufficiently rich to represent both the physical variability of the local configurations and the algorithmic states generated during the DD iterations.

\subsection{Stability of the surrogate-perturbed DD iterations}\label{sec:local_perturbation_theory}
Replacement of exact local solution maps \(\mathfrak{s}_i\) by surrogate approximations \(\widetilde{\mathfrak{s}}_i\) introduces an approximation error directly within the domain decomposition iteration. We therefore interpret the surrogate-based DD algorithm as a perturbed fixed-point iteration and study the propagation of such perturbations at an abstract level. We assume that the operator \(\mathcal{DD}^{\mu}\) is contractive in a norm
\(\|\cdot\|_{\star}\), namely that there exists \(q\in[0,1)\) such that
\[
\left\|
\mathcal{DD}^{\mu}(v)-\mathcal{DD}^{\mu}(w)
\right\|_{\star}
\leq
q\|v-w\|_{\star},
\qquad
\forall v,w\in\mathcal V(\Omega).
\]
The exact solution \(u^\mu=\mathfrak{s}(\mu)\) is assumed to be the unique fixed point \(u^\mu=\mathcal{DD}^{\mu}(u^\mu)\).

Let \(\widetilde{\mathcal{DD}}^{\mu}\) denote the perturbed iteration operator obtained by replacing one or more exact local solution maps \(\mathfrak{s}_i\) with their surrogate counterparts \(\widetilde{\mathfrak{s}}_i\). The corresponding iteration reads
\(\widetilde{u}^{(k+1)} = \widetilde{\mathcal{DD}}^{\mu} (\widetilde{u}^{(k)})\).
Equivalently, we write
\[
\widetilde{u}^{(k+1)}
=
\mathcal{DD}^{\mu}
\left(\widetilde{u}^{(k)}\right)
+
\delta^{(k)},
\
\mathrm{where}
\
\delta^{(k)}
:=
\widetilde{\mathcal{DD}}^{\mu}
\left(\widetilde{u}^{(k)}\right)
-
\mathcal{DD}^{\mu}
\left(\widetilde{u}^{(k)}\right).
\]
Note that \(\delta^{(k)}\) is the perturbation induced at iteration \(k\) by the surrogate local solvers and can be made explicit using the
representation introduced in Section~\ref{sec:local_representability}. 
For each subdomain \(i\in\mathcal I\), we define the local surrogate error
\[
\mathfrak{e}_i(\widehat{\mu})
:=
\mathfrak{s}_i(\widehat{\mu})
-
\widetilde{\mathfrak{s}}_i(\widehat{\mu})
\in\mathcal V(\Omega_i).
\]
By local representability, the same error can be expressed on the reference domain as
\[
\mathfrak{e}_{\star}(\widehat{\mu})
:=
\mathfrak{s}_{\star}(\widehat{\mu})
-
\widetilde{\mathfrak{s}}_{\star}(\widehat{\mu})
\in\mathcal V(\Omega_{\star}).
\]

The stability of the DD algorithm under the perturbations induced by the surrogate models is based on the following assumptions. We first assume a uniform approximation bound on the reference surrogate.

\begin{asm}[Uniform approximation property]
    The reference surrogate satisfies
\[
\sup_{\widehat{\mu}\in\widehat{\mathcal P}}
\left\|
\mathfrak{e}_{\star}(\widehat{\mu})
\right\|_{\mathcal V(\Omega_{\star})}
\leq \varepsilon.
\]
\end{asm}

Moreover, we assume that the composition of the local solution maps with the interface exchange and global assembly operations is stable with respect to local perturbations. 

\begin{asm}[Stability of the DD iterations]
\label{asm:cdd}
There exists a constant
\(C_{\mathrm{DD}}>0\) such that
\[
\left\|\delta^{(k)}\right\|_{\star}
\leq
C_{\mathrm{DD}}
\max_{i\in\mathcal I}
\left\|
\mathfrak{e}_{\star}
\left(\widehat{\mu}_i^{(k)}\right)
\right\|_{\mathcal V(\Omega_{\star})}.
% \implies
% \left\|\delta^{(k)}\right\|_{\star}
% \leq
% C_{\mathrm{DD}}\varepsilon,
% \ k\geq0.
\]
\end{asm}
The assumption \ref{asm:cdd} is a Lipschitz continuity assumption of the DD operator with respect to perturbations of the local solvers. In optimized Schwarz methods \cite{gander2012best}, global iterations are obtained through the composition of a bounded trace, Robin transmission, restriction, prolongation, and coarse-assembly operators. Consequently, perturbations of local solution maps propagate continuously to the global iteration, giving an estimate as in the assumption \ref{asm:cdd}, where \(C_{\mathrm{DD}}\) depends only on the stability constants of the underlying domain decomposition operators.

We can now quantify the effect of the surrogate perturbations on the DD
iteration.
\begin{prop}[Stability under surrogate perturbations]
\label{prop:perturbed_dd}
Let \(\mathcal{DD}^{\mu}\) be a contraction with constant \(q<1\) in the norm
\(\|\cdot\|_{\star}\), and assume that the perturbations induced by the
surrogate local solvers satisfy Assumption \ref{asm:cdd}.
Then, the surrogate-perturbed DD iterates satisfy
\[
\limsup_{k\to\infty}
\left\|
u^\mu-\widetilde{u}^{(k)}
\right\|_{\star}
\leq
\frac{C_{\mathrm{DD}}}{1-q}\varepsilon.
\]
\end{prop}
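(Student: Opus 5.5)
The argument is the standard estimate for an inexact contraction. We combine the contractivity of \(\mathcal{DD}^{\mu}\), the fixed-point identity \(u^\mu=\mathcal{DD}^{\mu}(u^\mu)\), and a uniform bound on the perturbations \(\delta^{(k)}\). The perturbation bound comes from Assumption~\ref{asm:cdd} together with the uniform approximation property.

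\textbf{Step 1: uniform bound on the perturbation.} Since each localized parameter \(\widehat\mu_i^{(k)}\) belongs to \(\widehat{\mathcal P}\) by local representability (Assumption~\ref{asm:local_representability}), the supremum in the uniform approximation property covers every \(\widehat\mu_i^{(k)}\) actually generated by the surrogate iteration, for every \(i\) and every \(k\). Assumption~\ref{asm:cdd} then gives
\[
\|\delta^{(k)}\|_\star\le C_{\mathrm{DD}}\max_{i\in\mathcal I}\|\mathfrak e_\star(\widehat\mu_i^{(k)})\|_{\mathcal V(\Omega_\star)}\le C_{\mathrm{DD}}\,\varepsilon
\qquad\text{for all } k\ge0 .
\]
This is the only delicate point. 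The bound must hold along the \emph{perturbed} trajectory, whose localized parameters may drift away from the exact-DD states. That is precisely why the supremum is taken over the whole enlarged space \(\widehat{\mathcal P}\) rather than over the states visited by the exact iteration. I would state this explicitly, noting that it is where the cascaded training distribution matters in practice.

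\textbf{Step 2: one-step recursion.} Set \(e^{(k)}:=u^\mu-\widetilde u^{(k)}\). Using \(u^\mu=\mathcal{DD}^{\mu}(u^\mu)\) and \(\widetilde u^{(k+1)}=\mathcal{DD}^{\mu}(\widetilde u^{(k)})+\delta^{(k)}\), we have \(e^{(k+1)}=\mathcal{DD}^{\mu}(u^\mu)-\mathcal{DD}^{\mu}(\widetilde u^{(k)})-\delta^{(k)}\). The triangle inequality and contractivity then yield
\[
\|e^{(k+1)}\|_\star\le q\|e^{(k)}\|_\star+C_{\mathrm{DD}}\varepsilon .
\]

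\textbf{Step 3: unroll and pass to the limit.} By induction,
\[
\|e^{(k)}\|_\star\le q^k\|e^{(0)}\|_\star+C_{\mathrm{DD}}\varepsilon\sum_{j=0}^{k-1}q^j\le q^k\|e^{(0)}\|_\star+\frac{C_{\mathrm{DD}}\varepsilon}{1-q}.
\]
Since \(q<1\), the first term vanishes as \(k\to\infty\), and taking the \(\limsup\) gives the claim. As a by-product, the argument also shows that the iterates enter the ball of radius \(C_{\mathrm{DD}}\varepsilon/(1-q)\) around \(u^\mu\) geometrically fast, independently of the initial guess.
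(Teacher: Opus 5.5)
Your proof is correct and follows the paper's argument: the appendix proof of Proposition~\ref{prop:appendix_perturbed_contraction} is exactly your Steps 2--3 (the recursion $\|e^{(k+1)}\|_\star\le q\|e^{(k)}\|_\star+C_{\mathrm{DD}}\varepsilon$, the geometric sum, and the $\limsup$), except that it takes $\|\delta^{(k)}\|_\star\le C_{\mathrm{DD}}\varepsilon$ directly as a hypothesis; your Step 1 correctly derives this from Assumption~\ref{asm:cdd} and the uniform approximation property, provided the perturbed-trajectory parameters lie in $\widehat{\mathcal P}$. One caveat on your closing remark: the estimate shows that the distance to the ball of radius $C_{\mathrm{DD}}\varepsilon/(1-q)$ decays like $q^k$, not that the iterates enter that ball after finitely many steps.
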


The proof of this property is reported  in the Appendix, where a more formal version of Proposition \ref{prop:appendix_perturbed_contraction} is formulated.
Proposition~\ref{prop:perturbed_dd} establishes a direct quantitative relation
between the approximation accuracy of the local surrogate and the global
accuracy of the resulting DD iteration. The exact contraction mechanism is
preserved up to an error floor proportional to the surrogate approximation
error. The amplification factor depends on two distinct properties of the
algorithm: the contraction factor \(q\), which determines how perturbations
accumulate across iterations, and the stability constant \(C_{\mathrm{DD}}\),
which measures how local surrogate errors are transferred through the
interface exchange and global assembly operations.

This observation has a direct implication for the design of the training
strategy. Under the assumptions above, uniform accuracy of the reference surrogate over the localized
parameter space \(\widehat{\mathcal P}\) is sufficient to control the
perturbation of the global iterative solver. Hence, the training set must
represent not only the variability of the local physical configurations, but
also the algorithmic states \(\widehat{\mu}_i^{(k)}\) generated along the DD
trajectories. This requirement motivates the cascaded training strategy
introduced in Section~4.2.

\section{A domain decomposition framework for mixed-dimensional problems}
\label{sec:3D1D_problem_localization}

We now instantiate the abstract framework of Section~\ref{sec:MultiScaleAbstract} on a mixed-dimensional elliptic problem coupling a three-dimensional bulk domain with an embedded one-dimensional structure.
We address a coupled problem defined by an exterior domain $\Omega\subset\mathbb{R}^3$ and an interior domain $\Sigma\subset\Omega$. We assume $\Sigma$ to be a generalized cylinder with a centerline $\Lambda$, the latter being a 1D domain with arc length parameter $s\in(0,S)$. An analogous formulation holds when the low-dimensional structure is given by the union of multiple substructures $\Sigma=\cup_i\Sigma_i$ and $\Lambda=\cup_i\Lambda_i$, each being a generalized cylinder. In this case, $\Lambda$ becomes a 1D graph embedded within a 3D domain. We assume that each cylinder has a constant radius $\varepsilon>0$, which is small compared to the diameter of $\Omega$. 

The dimensionality reduction process consists of replacing $\Sigma$ with $\Lambda$ while keeping some information about its original 3D structure and its interaction with the external domain $\Omega$. Let $\Upsilon:=\partial\Sigma$ be the interface of the coupled problem in the full 3D representation and let $\mathcal{T}_\Lambda$ be the restriction operator from $\Omega$ to $\Lambda$. In \cite{Laurino20192047,Kuchta2021a,Heltai20232425} it is defined as the composition of the trace operator on \(\Upsilon\) combined with a projection operator from \(\Upsilon\) to \(\Lambda\) based on cross-sectional averages.
We note that the averaging procedure plays a regularizing role: the ill-posed trace of a 3D function onto a 1D manifold (which is not defined in the usual Sobolev spaces) is replaced by the well-defined operator $\mathcal{T}_\Lambda$.
Then, the continuous 3D-1D coupled problem can be formulated as follows,
% \begin{equation}\label{eq:3D-1D-problem}
% \begin{cases}
% \nabla \cdot \left(-k_\Omega\nabla u\right)+2\pi\epsilon\left(\mathcal{T}_{\Lambda} u-u_{\Lambda}\right)\delta_{\Lambda}=0, & \text{on}\; \Omega,\\
% d_s \left(-\pi \epsilon^2 k_{\Lambda}d_s u_{\Lambda}\right)+2\pi\epsilon\left(u_{\Lambda}-\mathcal{T}_{\Lambda} u \right)=0, & \text{on}\; \Lambda, \\ 
% -k_{\partial \Omega}\nabla u \cdot \mathbf{n}+\beta(u-g_{\partial\Omega})=0, & \text{on}\; \partial \Omega, \\
% u_{\Lambda}=g_{\partial\Lambda}, & \text{on} \; \partial \Lambda_D.
% \end{cases}
% \end{equation}  
\begin{equation}\label{eq:general-problem}
\begin{cases}
K^\mu_\Omega(u) + M^\mu_{\Omega\Omega}(u) - M^\mu_{\Omega\Lambda}(u_\Lambda) = 0 & \text{in } \Omega, \\
K^\mu_\Lambda(u_\Lambda) + M^\mu_{\Lambda\Lambda}(u_\Lambda) - M^\mu_{\Lambda\Omega}(u) = 0 & \text{in } \Lambda, \\
B^\mu_\Omega(u) = g^\mu_\Omega & \text{on } \partial \Omega, \\
B^\mu_\Lambda(\mu)(u_\Lambda) = g^\mu_\Lambda & \text{on } \partial \Lambda,
\end{cases}
\end{equation}
where:
\begin{align*}
    & K^\mu_\Omega(u):=\nabla \cdot \left(-k_\Omega\nabla u\right); \ 
    M^\mu_{\Omega\Omega}(u):=2\pi\epsilon\left(\mathcal{T}_{\Lambda} u\right)\delta_{\Lambda}, \
    M^\mu_{\Omega\Lambda}(u_\Lambda):=2\pi\epsilon\left(u_{\Lambda}\right)\delta_{\Lambda}; 
    \\ 
    & K^\mu_\Lambda(u_\Lambda):=d_s \left(-\pi \epsilon^2 k_{\Lambda}d_s u_{\Lambda}\right); \ 
    M^\mu_{\Lambda\Lambda}(u_\Lambda):=2\pi\epsilon\left(u_{\Lambda} \right); \
    M^\mu_{\Lambda\Omega}(u):=2\pi\epsilon\left(\mathcal{T}_{\Lambda} u \right); 
    \\ 
    & B^\mu_\Omega(u) :=-k_{\partial \Omega}\nabla u \cdot \mathbf{n}+\beta u; \ 
    B^\mu_\Lambda(\mu)(u_\Lambda):=u_{\Lambda}|_{\partial\Lambda}.
\end{align*}
For notational convenience and for coherence with the abstract problem formulation \eqref{eq:split-param}, we denote the parametrization of the 1D graph $\Lambda$ as $\mu$.
We note that problem \eqref{eq:general-problem} contains some abuse of notation as $K^\mu_\Omega(u)$ does not actually depend on the parameter $\mu$, while the notation $K^\mu_\Lambda(u_\Lambda)$ is redundant since $\mu$ and $\Lambda$ both refer to the 1D graph.

The functions $u, u_\Lambda$ are coupled unknowns in the 3D and 1D domains, respectively, and $k_\Omega, \sigma_\Omega , k_\Lambda$ represent the coefficients of the elliptic operators.  
We note that the first equation of \eqref{eq:general-problem} is only formally defined in its strong form, as the symbol $\delta_\Lambda$ denotes the Dirac distribution supported on the one-dimensional manifold $\Lambda \subset \Omega$. The coupling term $2\pi\epsilon\left(\mathcal{T}_{\Lambda} u-u_{\Lambda}\right)\delta_{\Lambda}$ is obtained by averaging on $\Upsilon$ the coupling condition $- k_\Omega \nabla u \cdot \mathbf{n} = u - u_\Lambda $, similar to the one adopted in \cite{Laurino20192047}. In fact, the coefficients $2\pi\epsilon$ and $\pi\epsilon^2$ represent the cross-sectional measures of $\Upsilon$ and $\Sigma$, respectively.

We are interested in solving multiple instances of this problem for different configurations of the low-dimensional structure, including, for example, changes in the topology or density of the 1D graph $\Lambda$. To formalize this fact, it is useful to define a suitable \textit{parameter space} $\mathcal{P}$, which collects all configurations of interest. There are many ways to define this parameterization, the choice depending on the surrogate model architecture, as will be discussed later on.

The first step toward an efficient solution is to reformulate the coupled problem \eqref{eq:general-problem} into a \emph{weakly coupled algorithm}, where the 3D and 1D problems are solved separately.
This algorithm reads as follows: given an initial guess $u_\Lambda^0$, for $k=1,2\ldots$ solve sequentially the following two sub-problems until a suitable convergence criterion is satisfied:

\begin{equation}\label{eq:3d_subprob}
    \mathfrak{p}_\Omega^\mu(u^{(k)}) :=
    \begin{cases}
      {K}^\mu_{\Omega}(u^{(k)}) + {M}^\mu_{\Omega\Omega}(u^{(k)}) = {M}^\mu_{\Omega\Lambda}(u_\Lambda^{(k-1)}) & \text{ in } \Omega, \\[4pt]
      B^\mu_{\Omega}(u^{(k)}) = g_\Omega & \text{ on } \partial \Omega ,
    \end{cases}
\end{equation} 
and
\begin{equation}\label{glob_1d}
    \mathfrak{p}^\mu_\Lambda(u_\Lambda^{(k)}) :=
    \begin{cases}
      K^\mu_{\Lambda}(u_\Lambda^{(k)}) + M_{\Lambda\Lambda}(u_\Lambda^{(k)}) = M_{\Lambda\Omega}(u^{(k)}) & \text{ in } \Lambda, \\[4pt]
      B_{\Lambda}(u_\Lambda^{(k)}) = g_\Lambda & \text{ on } \partial \Lambda .
    \end{cases}
\end{equation}

In the present work, the DD reduction is applied only to the three-dimensional bulk problem \(\mathfrak{p}_\Omega^\mu\), which is the computationally dominant component of the coupled system. The one-dimensional problem \(\mathfrak{p}_\Lambda^\mu\) is retained as a global solve within the staggered iteration. For simplicity of notation, we omit the subscript $\Omega$, since it is now understood without ambiguity that $\Omega$ is the reference domain. In the following section, we detail the formulation of this solver, explaining how the global problem is decomposed, how the local problems are specified, and how the interface conditions ensure consistency between subdomains.

% \begin{figure}[h!]
%     \centering
%     \includegraphics[width=0.85\linewidth]{Figures/algo_scheme.pdf}
%     \caption{Schematization of the weakly coupled algorithm.
%     \textbf{Top:} conceptual representation of the decoupling step between the coupled 3D--1D problem, illustrating the separation of the 3D domain $\Omega$ and the 1D network $\Lambda$. The dotted arrows refer to staggered (iterative) solution strategy. 
%     \textbf{Bottom:} schematic view of the domain decomposition procedure applied to the 3D subproblem within the decoupled staggered framework.}
%     \label{fig:dd-scheme}
% \end{figure}

\subsection{A non-overlapping domain decomposition algorithm}

We now proceed to a detailed presentation of the domain decomposition solver. 
We adopt a non-overlapping domain decomposition formulation, although the localized learning framework is not intrinsically restricted to this setting. Overlapping formulations provide a natural alternative and their comparison with the present approach is left for future investigation. Concerning the transmission conditions, we employ Robin conditions on all internal interfaces, leading to a Robin-Robin (RR) optimized Schwarz method. This choice provides a uniform interface treatment, in which the same type of transmission operator acts on every subdomain interface, and is therefore particularly well suited to the construction of reusable local surrogate models.

We assume that \(\Omega\subset\mathbb{R}^3\) is a tensor-product domain and consider a partition into congruent Cartesian subdomains,
\(\overline{\Omega}
= \cup_{i\in\mathcal I}\overline{\Omega}_i,
\ \Omega_i\cap \Omega_j=\emptyset
\ \text{for }i\neq j\).
The interface between two subdomains $\Omega_i$ and $\Omega_j$ is written as $\Gamma_{ij} := \partial \Omega_i \cap \partial \Omega_j$. For any quantity $x$ defined on $\Omega$, its restriction to $\Omega_i$ is denoted by $x|_{\Omega_i} =: x_i$.

To simplify the exposition, let us focus on a pair of neighboring subdomains $(\Omega_i, \Omega_j)$. Given a global solution $u$ defined on $\Omega$, the following continuity conditions must hold across the common interface $\Gamma_{ij}$:
\begin{equation}\label{eq:dd-cont}
    u_{i}|_{\Gamma_{ij}} = u_{j}|_{\Gamma_{ij}}, \quad
    k_\Omega \nabla u_i \cdot \mathbf{n}_i = - k_\Omega \nabla u_j \cdot \mathbf{n}_j \quad \text{on } \Gamma_{ij},
\end{equation}
where $\mathbf{n}_{i}$ and $\mathbf{n}_{j}$ denote the outward unit normal vectors to $\partial \Omega_i$ and $\partial \Omega_j$, respectively, and for notational simplicity $k_\Omega$ is assumed to be constant.
Let $K^\mu_{\Omega,i},\,M^\mu_{\Omega\Omega,i},\,M^\mu_{\Omega\Lambda,i}$ be the restrictions of the operators of problem \eqref{eq:3d_subprob} on $\Omega_i$ and for notational convenience, we set
\begin{equation}
\label{eq:A_definition}
A^\mu_i := K^\mu_{\Omega,i} + M_{\Omega\Omega,i}, 
\quad 
b^\mu_i := \,M^\mu_{\Omega\Lambda,i}(u_{\Lambda,i}),
\end{equation}
so that the $i$-th local subproblem reads
\begin{equation}\label{eq:gen_loc_i}
    \mathfrak{p}_i^\mu(u_i) :=
    \begin{cases}
    A^\mu_i(u_i) = b_i & \text{in } \Omega_i, \\
    B_{\Omega_i}(u_i) = g_i & \text{on } \partial \Omega \cap \partial \Omega_i, \\
    - k_\Omega \nabla u_i \cdot \mathbf{n}_i + \rho\, u_i|_{\Gamma_{ij}}
    = - k_\Omega  \nabla u_j \cdot \mathbf{n}_j + \rho\, u_j|_{\Gamma_{ij}}
    & \text{on } \Gamma_{ij},
    \end{cases}
\end{equation}
where $\rho > 0$ is an acceleration parameter and it can be tuned to optimize the convergence rate of the algorithm \cite{gander2012best}.

 As already noted, the formulation and parametrization of the general local problem \eqref{eq:gen_loc_i} differ from those of the global problem. Therefore, we denote the collection of local problems by \(\{\mathfrak{p}_{i}^{\hat{\mu}}\}_{i\in \mathcal{I}},\) where \(\hat{\mu} \in \widehat{\mathcal{P}}\) belongs to a modified parameter space with respect to the global one. This space accounts for a larger variability at the microscale and for different configurations of the boundary conditions, so as to represent internal boundaries between subdomains. Consequently, we can recall the notion of \textit{local representability} applied to the mixed-dimensional 3D subproblem: for each \textit{global} 3D problem \(\mathfrak{p}_\Omega^\mu(u)\) defined in \eqref{eq:3d_subprob} there exists a parameter \(\hat{\mu} \in \hat{\mathcal{P}}\) such that the collection of discrete Robin subproblems \(\{\mathfrak{p}_i^{\hat{\mu}}(u_i)\}_{i \in \mathcal{I}}\) defined by \eqref{eq:gen_loc_i} is equivalent to \(\mathfrak{p}_\Omega^\mu(u)\).

\subsection{Numerical discretization by the finite element method}
\label{sec:fem_discretization}

The domain \(\Omega\) is equipped with a uniform conforming mesh
\(\mathcal T_h^\Omega\), aligned with the subdomain interfaces. Hence, the
restriction of \(\mathcal T_h^\Omega\) to each \(\Omega_i\) defines a local
mesh \(\mathcal T_{h,i}\), and the local meshes match across every interface
\(\Gamma_{ij}:=\partial\Omega_i\cap\partial\Omega_j\).
%Each subdomain is the affine image of the unit reference domain
%\(\Omega_\star\) through a bijective affine map
%\(\Phi_i:\Omega_\star\to\Omega_i\).
The uniform partition and mesh are chosen such that
\(\Phi_i^{-1}(\mathcal T_{h,i})
=\mathcal T_{h,\star},
\ \forall i\in\mathcal I,\)
where \(\mathcal T_{h,\star}\) is a common reference mesh on
\(\Omega_\star\). 
In the Cartesian decompositions considered here, $\Phi_i$ is simply an affine transformation. Thus, all local meshes and finite element spaces are affine-equivalent to the same reference discretization.

Let \(\mathcal V_{h,i} \subset H^1(\Omega_i)\)
denote the local finite element space. 
%The pullback
%\(\mathcal T_i:V_{h,i}\to V_{h,\star} \ %\mathcal T_i v_h:=v_h\circ\Phi_i\)
%identifies every local discrete space with the common reference space
%\(V_{h,\star}\). 
We then introduce the broken finite element space
\(\mathcal{V}_h := \mathcal{V}_{h,i} \oplus \mathcal{V}_{h,j}\).

The weak formulation of the problem reads as follows: find $u_h=(u_{h,i}, u_{h,j}) \in \mathcal{V}_h$ such that 
\begin{equation*}
\begin{cases}
\begin{aligned}   
    &a_i(u_{h,i}, v_{h,i})-(k_\Omega \nabla u_{h,i} \cdot { n_i}, v_{h,i})_{\Gamma_{ij}} = b_i(v_{h,i}), \quad \text{for all} \; v_{h,i} \in \mathcal{V}_h(\Omega_i)\\   
    &(k_\Omega \nabla u_{h,i} \cdot {\mathbf{n}_i}, v_{h,i})_{\Gamma_{ij}} +\rho (u_{h,i}, v_{h,i})_{\Gamma_{ij}}  = 
    -(k_\Omega\nabla u_{h,j} \cdot {\mathbf{n}_j}, v_{h,i})_{\Gamma_{ij}} +\rho(u_{h,j}, v_{h,i})_{\Gamma_{ij}} 
\end{aligned} 
\end{cases}
\end{equation*} 
and 
\begin{equation*}
\begin{cases}
\begin{aligned}   
    & a_j(u_{h,j}, v_{h,j})-(k_\Omega\nabla u_{h,j} \cdot { n_j}, v_{h,j})_{\Gamma_{ij}} = b_j(v_{h,j}) \quad \text{for all} \; v_{h,j} \in \mathcal{V}_h(\Omega_j)\\   
    & (k_\Omega\nabla u_{h,j} \cdot {\mathbf{n}_j}, v_{h,j})_{\Gamma_{ij}} +\rho (u_{h,j}, v_{h,j})_{\Gamma_{ij}}  = 
    -(k_\Omega\nabla u_{h,i} \cdot {\mathbf{n}_i}, v_{h,j})_{\Gamma_{ij}} +\rho(u_{h,i}, v_{h,j})_{\Gamma_{ij}}  
\end{aligned} 
\end{cases}
\end{equation*}
where
\begin{equation*}
    a_i(u_{h,i}, v_{h,i}):= (k_\Omega \nabla u_{h,i}, \nabla v_{h,i})_{\Omega_i} + 2\pi \epsilon (\overline{\mathcal{T}}_{\Lambda_i} u_{h,i}, \overline{\mathcal{T}}_{\Lambda_i} v_{h,i})_{\Lambda_i} ,
\end{equation*}
and 
\begin{equation*}
    b_i(v_{h,i}):= 2\pi \epsilon( u_{\Lambda,i}, \overline{\mathcal{T}}_{\Lambda_i} v_{h,i})_{\Lambda_i}.
\end{equation*}

Let $\mathcal{L}_i $ denote the lifting operator from $\mathcal{V}_{h,j}$ to $\mathcal{V}_{h,i}$, defined so that
\((\mathcal{L}_i v_{h,j})|_{\Gamma_{ij}} = v_{h,j}|_{\Gamma_{ij}}\).
%This operator is defined to map the trace of a basis function from $\mathcal{V}_{h,j}$ to the corresponding basis function in $\mathcal{V}_{h,i}$, satisfying the following:
%\[(\mathcal{L}_i \varphi_{h,j})|_{\Gamma_{ij}} = \varphi_{h,j}|_{\Gamma_{ij}}, \quad \mathcal{L}_i \varphi_{h,j}=\varphi_{h,i}.\]
The operator $\mathcal{L}_i$ establishes a correspondence between the basis functions of the two subdomains that share the same degrees of freedom along the interface.
Since $(\cdot,\cdot)_{\Gamma_{ij}}$ depends only on traces, we have
\[(k_\Omega \nabla u_{h,i}\!\cdot \mathbf{n}_i,\, v_{h,j})_{\Gamma_{ij}}
= (k_\Omega \nabla u_{h,i}\!\cdot \mathbf{n}_i,\, \mathcal{L}_i v_{h,j})_{\Gamma_{ij}},\]
therefore,
\[(k_\Omega \nabla u_{h,i}\!\cdot \mathbf{n}_i,\, v_{h,j})_{\Gamma_{ij}}
= a_i\!\big(u_{h,i},\, \mathcal{L}_i v_{h,j}\big)-\,b_i(\mathcal{L}_i v_{h,j}).\]
An analogous relation holds for the flux term $(\nabla u_{h,j}\cdot \mathbf{n}_j, v_{h,i})_{\Gamma_{ij}}$.

The continuity of Robin traces implies the subspace equation on $\Omega_i$ 
\begin{equation}
\label{eq:var_i}
    a_i(u_{h,i}, v_{h,i}) +\rho(u_{h,i}, v_{h,i})_\Gamma  = b_i(v_{h,i}) +  b_j(\mathcal{L}_j\,v_{h,i})-a_j(u_{h,j}, \mathcal{L}_j\,v_{h,i}) + \rho(u_{h,j}, \mathcal{L}_j\,v_{h,i})_\Gamma ;
\end{equation}
and similarly, for subdomain $\Omega_j$,
\begin{equation}
\label{eq:var_j}
    a_j(u_{h,j}, v_{h,j}) +\rho(u_{h,j}, v_{h,j})_\Gamma  = b_j(v_{h,j}) +  b_i(\mathcal{L}_i\,v_{h,j})-a_i(u_{h,i}, \mathcal{L}_i\,v_{h,j}) + \rho(u_{h,i}, \mathcal{L}_i\,v_{h,j})_\Gamma .
\end{equation}
Here, we refer to
\[f_{h,j}(v_{h,i}) := b_j(\mathcal{L}_j\,v_{h,i})-a_j(u_{h,j}, \mathcal{L}_j\,v_{h,i}) + \rho(u_{h,j}, \mathcal{L}_j\,v_{h,i})_{\Gamma_{ij}} \]
as the discrete Robin transmission data from domain $\Omega_j$ to domain $\Omega_i$ (the definition is entirely analogous to that of $f_j$).
As a result, the variational form of the local problem in $\Omega_i$ reads as follows,
\begin{equation}
\label{eq:gen_var_i}
    a_i(u_{h,i}, v_{h,i}) +\rho(u_{h,i}, v_{h,i})_\Gamma  = b_i(v_{h,i}) +    f_{h,j}(v_{h,i}) \quad \forall v_{h,i} \in \mathcal{V}_{h,i}.
\end{equation}

To define the iterative domain decomposition algorithm, we adopt a block-Jacobi
strategy. Let \(k\geq 0\) denote the iteration index. Given the previous iterate
\(
u_h^{(k)} \in \mathcal V_{h,i} \oplus \mathcal V_{h,j},
\)
the new iterate
\(
u_h^{(k+1)} := u_{h,i}^{(k+1)} \oplus u_{h,j}^{(k+1)}
\)
is obtained by solving, in parallel, the local subproblems 
\begin{equation}
\label{eq:fucitonalDD}
\begin{cases}
    \vspace{10pt} a_i(u^{(k+1)}_{h,i}, v_{h,i}) +\rho(u_{h,i}^{(k+1)}, v_{h,i})_\Gamma  = b_i(v_{h,i}) +  f^{(k)}_{h,j}(v_{h,i}) ;\\[10pt]
    a_j(u^{(k+1)}_{h,j}, v_{h,j}) +\rho(u^{(k+1)}_{h,j}, v_{h,j})_\Gamma  = b_j(v_{h,j}) + f_{h,i}^{(k)}(v_{h,j}) .\vspace{10pt}
\end{cases}
\end{equation}
where the incoming Robin data from \(\Omega_j\) at iteration \(k\) are defined by
\[
f_{h,j}^{(k)}(v_{h,i})
:=
b_j(\mathcal{L}_j\,v_{h,i})
-
a_j(u_{h,j}^{(k)}, \mathcal{L}_j\,v_{h,i})
+
\rho(u_{h,j}^{(k)}, \mathcal{L}_j\,v_{h,i})_{\Gamma_{ij}},
\]
and \(f_{h,i}^{(k)}\) is defined analogously by exchanging the roles of \(i\) and \(j\).

Under standard assumptions on the finite element discretization, the non-overlapping Robin--Robin iteration satisfies the contraction property assumed in Section~\ref{sec:local_perturbation_theory}. More precisely, let \(\mathcal{W}_h \subset H^1(\Omega)\) denote the global conforming finite element space associated with \(\mathcal{T}_h^\Omega\), whose functions are continuous across the subdomain interfaces, and let \(u_h\in\mathcal{W}_h\) be the solution of the direct finite element discretization of the three-dimensional problem~\eqref{eq:general-problem}. The following proposition holds.

\begin{prop}[Contraction of the discrete Robin--Robin iteration]
Assume that the hypotheses in the detailed formulation of Proposition~\ref{prop:exactRR-OSMconvergence} reported in the Appendix are satisfied.
Let \(u_{h,i}^{(k)}\in \mathcal V_{h,i}\) denote the local Robin--Robin iterate on
\(\Omega_i\) at iteration \(k\geq 1\), and define
\(
e_{h,i}^{(k)} := u_{h,i}^{(k)} - u_h|_{\Omega_i}.
\)
Moreover, having set \(\|v\|_{a,i}^2:=a_i(v,v)\), let
\[
\|e_h\|_{\star}
:=
\left(
\sum_{i\in\mathcal{I}}
\|e_{h,i}\|_{a,i}^2
+
\sum_{i\in\mathcal{I}}
\sum_{j\in\mathcal{N}(i)}
\tfrac{1}{2}\,\tfrac{\gamma}{h}\,
\|e_{h,i}\|_{L^2(\Gamma_{ij})}^2
\right)^{1/2}.
\] 
Then the Robin--Robin iteration is a contraction in the norm \(\|\cdot\|_{\star}\).
More precisely, there exists a constant \(q\in(0,1)\) such that, for every \(k\geq 1\),
\[
\|e_h^{(k)}\|_{\star}
\leq
q\,
\|e_h^{(k-1)}\|_{\star}.
\]
\end{prop}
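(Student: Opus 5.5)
The plan is the classical energy argument for optimized Schwarz methods (Lions, Després), transferred to the discrete setting as in Qin--Xu. The discrete-flux subtlety is absorbed through the lifting identities and an inverse trace inequality, which accounts for the weight $\gamma/h$ in $\|\cdot\|_\star$.

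\emph{Step 1: error equations.} Since $u_h$ solves the conforming discretization, its restrictions satisfy the local Robin problems \eqref{eq:gen_var_i} with transmission data built from $u_h$ itself. Subtracting these from \eqref{eq:fucitonalDD} gives homogeneous error equations:
\[
a_i(e_{h,i}^{(k)},v)+\rho(e_{h,i}^{(k)},v)_{\Gamma_{ij}}
= -a_j(e_{h,j}^{(k-1)},\mathcal L_j v)+\rho(e_{h,j}^{(k-1)},\mathcal L_j v)_{\Gamma_{ij}}
\quad \forall v\in\mathcal V_{h,i}.
\]
The right-hand sides $b_i,b_j$ cancel.

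\emph{Step 2: discrete flux and energy identity.} Define the discrete normal flux $\lambda_{ij}^{(k)}\in\mathcal V_{h,i}|_{\Gamma_{ij}}$ by
\[
(\lambda_{ij}^{(k)},v)_{\Gamma_{ij}} := a_i(e_{h,i}^{(k)},v)\quad \text{for discrete-harmonic liftings }v.
\]
The error equation then reads as a discrete Robin condition:
\[
\lambda_{ij}^{(k)}+\rho e_{h,i}^{(k)} = -\lambda_{ji}^{(k-1)}+\rho e_{h,j}^{(k-1)} \quad\text{on }\Gamma_{ij}.
\]
Taking $v=e_{h,i}^{(k)}$ gives $\|e_{h,i}^{(k)}\|_{a,i}^2=(\lambda_{ij}^{(k)},e_{h,i}^{(k)})_{\Gamma}$ (summed over neighbours). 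Expanding the squares of the outgoing and incoming Robin traces yields the algebraic identity
\[
\|\lambda+\rho e\|^2-\|{-\lambda}+\rho e\|^2 = 4\rho(\lambda,e).
\]
This gives
\[
\sum_{i}\sum_{j}\|{-\lambda_{ij}^{(k)}}+\rho e_{h,i}^{(k)}\|^2_{\Gamma_{ij}} + 4\rho\sum_i\|e_{h,i}^{(k)}\|_{a,i}^2
= \sum_{i}\sum_j\|{-\lambda_{ji}^{(k-1)}}+\rho e_{h,j}^{(k-1)}\|^2_{\Gamma_{ij}}.
\]
So the interface quantity $E^{(k)}:=\sum\|{-\lambda}+\rho e\|^2$ decreases by exactly $4\rho$ times the energy.

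\emph{Step 3: strict contraction (main obstacle).} On its own, Step 2 only gives monotonicity. To get a uniform $q<1$, I would bound $E^{(k)}$ by the energy. Using the discrete inverse trace inequality $\|\lambda_{ij}\|_{L^2(\Gamma_{ij})}\le C_{\rm inv}h^{-1/2}\|e_{h,i}\|_{a,i}$ together with the $L^2(\Gamma)$ term, one obtains
\[
E^{(k)}\le C\Big(\textstyle\sum_i\|e_{h,i}^{(k)}\|_{a,i}^2+\tfrac{\gamma}{h}\sum\|e_{h,i}^{(k)}\|^2_{\Gamma}\Big) = C\|e_h^{(k)}\|_\star^2
\]
when $\rho\sim\gamma/h$, the scaling assumed in the Appendix hypotheses. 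The main obstacle is the reverse control: the energy (including the $\gamma/h$ trace part, which is bounded through a coercivity/Poincaré-type argument using the Robin boundary term $\beta$ on $\partial\Omega$ and the coupling term $2\pi\epsilon$) must dominate a fixed fraction of $E^{(k)}$. Combining the two bounds gives $E^{(k)}\le \theta E^{(k-1)}$ with $\theta=1-4\rho c/C<1$.

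\emph{Step 4: back to the $\star$-norm.} Since $E^{(k)}$ and $\|e_h^{(k)}\|_\star^2$ are mutually bounded, I would either work with the equivalent $E$-norm or, more precisely, use Step 2 directly: $\|e_h^{(k)}\|_\star^2\lesssim E^{(k-1)}-E^{(k)}$ and $E^{(k-1)}\lesssim \|e_h^{(k-1)}\|_\star^2$. The equivalence constants must be absorbed so that $q$ bounds the one-step ratio, which is the form assumed in the Appendix proposition. The resulting $q$ depends on $\rho h/\gamma$, $C_{\rm inv}$ and the coercivity constants; it deteriorates as $h\to0$ but stays strictly below $1$ for fixed $h$.
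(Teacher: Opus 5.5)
Your Steps 1--3 are the classical Lions/Despr\'es interface-energy argument, and they do establish geometric decay of the Robin-data functional $E^{(k)}$. The gap is Step 4. The statement asks for a one-step contraction $\|e_h^{(k)}\|_\star\le q\,\|e_h^{(k-1)}\|_\star$ in the given norm, and equivalence constants cannot simply be ``absorbed'' to produce it.

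Your argument actually yields $\|e_h^{(k)}\|_\star^2\le C_1\bigl(E^{(k-1)}-E^{(k)}\bigr)\le C_1E^{(k-1)}$ and $E^{(k-1)}\le C_2\|e_h^{(k-1)}\|_\star^2$. Hence you only get $\|e_h^{(k)}\|_\star\le\sqrt{C_1C_2}\,\|e_h^{(k-1)}\|_\star$. Evaluate both inequalities on a discrete Steklov eigenfunction, i.e.\ $a_i(e,v)=s\,(e,v)_{\Gamma_{ij}}$ for all $v$, so that $\lambda=s\,e$ and the outgoing Robin datum is $(\rho-s)e$. This shows the best constants satisfy $C_1C_2\ge(\rho-s)^2/(4\rho s)$, which for $s=O(1)$ is of order $\rho=\gamma/h\gg1$.

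Combined with $E^{(k)}\le\theta E^{(k-1)}$, you therefore get only $\|e_h^{(k)}\|_\star\le K\theta^{(k-1)/2}\|e_h^{(0)}\|_\star$ with a possibly large $K$ (R-linear convergence), or a contraction of the Robin data in $L^2(\Gamma)$. Neither is the claimed contraction in $\|\cdot\|_\star$. The ``mutual boundedness'' you invoke is also not established. You bound the iterate at step $k$ by $E^{(k-1)}$, not by $E^{(k)}$, and a bound $\|e\|_\star^2\lesssim E(e)$ fails in general: if $\rho$ is a discrete Steklov eigenvalue, $E$ vanishes on the corresponding eigenfunction.

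The paper never passes through the Robin data; it estimates directly in the two components of $\|\cdot\|_\star$. It tests the error equation with $e_{h,i}^{(k)}$, polarizes $\tfrac{\gamma}{h}\bigl(e_{h,i}^{(k)}-e_{h,j}^{(k-1)},e_{h,i}^{(k)}\bigr)_{\Gamma_{ij}}$, then splits $a_j\bigl(e_{h,j}^{(k-1)},\mathcal L_j e_{h,i}^{(k)}\bigr)$ through $\mathcal L_j e_{h,j}^{(k-1)}$ and polarizes again. It then uses the lifting stability $\|\mathcal L_j v\|_{a,\Omega_j}^2\le\ell_j h^{-1}\|v\|_{L^2(\Gamma_{ij})}^2$ together with $\gamma\ge\ell_j$. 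This absorbs $\tfrac12\|\mathcal L_j(e_{h,j}^{(k-1)}-e_{h,i}^{(k)})\|_{a,\Omega_j}^2$ into the jump term $\tfrac{\gamma}{2h}\|e_{h,i}^{(k)}-e_{h,j}^{(k-1)}\|_{L^2(\Gamma_{ij})}^2$, with Young's inequality on the remaining cross term.

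The result is the local estimate $\|e_{h,i}^{(k)}\|_{a,\Omega_i}^2+\tfrac{\gamma}{2h}\|e_{h,i}^{(k)}\|_{L^2(\Gamma_{ij})}^2\le\tfrac12\|e_{h,j}^{(k-1)}\|_{a,\Omega_j}^2+\tfrac{\gamma}{2h}\|e_{h,j}^{(k-1)}\|_{L^2(\Gamma_{ij})}^2$. Summing gives $\|e_h^{(k)}\|_\star^2\le\tfrac12A+B$, with $A$, $B$ the energy and trace parts of $\|e_h^{(k-1)}\|_\star^2$. A $k$-independent $q<1$ then follows once $B\le CA$, i.e.\ from a trace inequality and the assumed $H^1$-equivalence of $a_l$. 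This is where the hypothesis $\gamma\ge\ell_j$ and the weight $\gamma/(2h)$ in the norm enter. Your argument uses neither, which signals that it proves a different statement: decay of the Robin data, valid for every $\rho>0$.

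Two smaller points. The ``main obstacle'' of Step 3 follows immediately from that same trace inequality and $H^1$-equivalence, with no appeal to $\beta$ or to the coupling term. And from $4\rho\sum_i\|e_{h,i}^{(k)}\|_{a,i}^2=E^{(k-1)}-E^{(k)}$ and $\sum_i\|e_{h,i}^{(k)}\|_{a,i}^2\ge\kappa E^{(k)}$, the factor is $\theta=(1+4\rho\kappa)^{-1}$, not $1-4\rho c/C$.
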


The proof of this result is reported in the Appendix, where a more formal version of Proposition \ref{prop:exactRR-OSMconvergence} is available.
Hence, the discrete DD iteration provides a concrete realization of the contractive map introduced in Section~\ref{sec:local_perturbation_theory}, with \(u_h\) as its unique fixed point. The perturbation analysis developed there therefore applies directly when the exact local finite element solves are replaced by surrogate approximations.

\medskip

\subsection{Matrix Formulation of the domain decomposition algorithm}
Now, let $\mathsf{A_i}$ be the matrix representation of the bilinear form $a_i(\varphi_{h,i},\varphi_{h,i})$, and let $\mathsf{b_i} := b_i(\varphi_{h,i})_{\Omega_i}$ 
be the corresponding load vector with $v_{h,i} \in \mathcal{V}_{h,i}$. 
%Recall that, given the one-dimensional data $u_{\Lambda,h}$, the forcing term is defined as  \(b_i = M_{\Omega\Lambda}(u_{\Lambda,h})|_{\Omega_i}\), we introduce $\mathsf{M_i}$ as the finite element discretization of the coupling operator $M_{\Omega\Lambda}$.  Hence, the discrete forcing vector can be expressed as \(\mathsf{b_i} = \mathsf{M_i}\, \mathsf{u}_{\Lambda,\mathsf{h}}\), where $\mathsf{u}_{\Lambda,\mathsf{h}}$ denotes the discrete representation of the one-dimensional data $u_{\Lambda,h}$.
Analogously, we define $\mathsf{A_j}$ and $\mathsf{b_j}$, and let $\mathsf{u_{i}}$ and $\mathsf{u_{j}}$ denote the vectors of solution coefficients associated with $u_{h,i}$ and $u_{h,j}$, respectively. Finally, let $\mathsf{G}^{\Gamma_{ij}}$ denote the matrix representation of the inner product $(\cdot,\,\cdot)_{\Gamma_{ij}}$ along the virtual interface $\Gamma_{ij}$. Note that from now on we use the typesetting style $\mathsf A, \mathsf v$ to denote algebraic operators and the vector representation of discrete functions.

Consider a partition of local basis functions into two sets: the basis
functions associated with the interior degrees of freedom of $\Omega_i$,
indexed by $I$, and those associated with the degrees of freedom lying on
the internal interfaces of $\partial\Omega_i$, indexed by $\Gamma$. Here,
$\Gamma$ denotes the union of possibly multiple interfaces shared by
$\Omega_i$ with its neighboring subdomains. This partition induces the
corresponding block decomposition of the local matrices and vectors.

At the algebraic level, the local Robin problem can then be written as
\begin{equation}\label{eq:gen_alg_i}
\mathfrak{p}_i^\mu(\mathsf{u}_i) :=
 \left[
    \begin{array}{cc}
        \mathsf{A}_{\mathsf{i}}^{II} & \mathsf{A}_{\mathsf{i}}^{I\Gamma} \\[7pt]
        \mathsf{A}_{\mathsf{i}}^{\Gamma I} & \mathsf{A}_{\mathsf{i}}^{\Gamma \Gamma} + \rho \, \mathsf{G}_\mathsf{i}^{\Gamma}
    \end{array} \right]
    \left[\begin{array}{c}
         \mathsf{u}_{\mathsf{i}}^{I}  \\[7pt]
         \mathsf{u}_{\mathsf{i}}^{\Gamma} 
    \end{array}\right]
    =
    \left[\begin{array}{c}
         \mathsf{b}_{\mathsf{i}}^{I}  \\[7pt]
         \mathsf{b}_{\mathsf{i}}^{\Gamma} +  \sum_{j \neq i}\mathsf{L}_{\mathsf{i}} \, \mathsf{L}_{\mathsf{j}}^\top \, \mathsf{f}_{\mathsf{j}}
    \end{array}\right] ,
\end{equation}
where
\(\mathsf{f}_{\mathsf{j}}
:= \mathsf{b}_{\mathsf{j}}
- \mathsf{A}_{\mathsf{j}} \, \mathsf{u}_{\mathsf{j}}
 + \rho \mathsf{G}_{\mathsf{j}}^{\Gamma_{ij}} \, \mathsf{u}_{\mathsf{j}}^{\Gamma_{j}}\)
is the discrete Robin transmission vector generated by subdomain
$\Omega_j$. The sum in~\eqref{eq:gen_alg_i} collects the Robin
data transferred from all neighboring subdomains to $\Omega_i$.

Due to the assumptions of uniformity and conformity in the computational mesh, the \textit{restriction matrix} $\mathsf{L_i}$ maps the global degrees of freedom of $\mathcal{T}_h^{\Omega}$ to the local degrees of freedom of $\mathcal{T}_h^{\Omega_i}$. It is a Boolean matrix of size $N_i \times N$, with entries \(\mathsf{L_i}(m,n) = 1\) if the $m$-th local and $n$-th global degrees of freedom coincide, and \(\mathsf{L_i}(m,n) = 0\) otherwise. Thus, $\mathsf{L_i}$ restricts global coefficient vectors to $\Omega_i$, while $\mathsf{L_i}^\top$ extends local vectors to the global space by zero-padding. Consequently, $\mathsf{L_i}\mathsf{L_j}^\top$ transfers the degrees of freedom shared by $\Omega_j$ and $\Omega_i$ and represents the discrete counterpart of the continuous lifting operator $\mathcal{L}_i$. The discrete Robin trace transferred from $\Omega_j$ to $\Omega_i$ is therefore given by $\mathsf{L_i}\mathsf{L_j}^\top \mathsf{f_j}$.

To define the iterative DD solver,
%we adopt a block-Jacobi strategy, in
%which all local subproblems are updated using Robin data computed from the
%previous iteration.
%Accordingly, for $k\geq 1$, the discrete local problem
%on $\Omega_i$ is
accordingly to equation~\ref{eq:fucitonalDD}, for $k\geq 1$, the discrete local problem on $\Omega_i$ reads

\begin{equation}\label{eq:gen_alg_ik}
\mathfrak{p}_i^\mu(\mathsf{u}_i^{(k)}) :=
 \left[
    \begin{array}{cc}
        \mathsf{A}_{\mathsf{i}}^{II} & \mathsf{A}_{\mathsf{i}}^{I\Gamma} \\[7pt]
        \mathsf{A}_{\mathsf{i}}^{\Gamma I} & \mathsf{A}_{\mathsf{i}}^{\Gamma \Gamma} + \rho \, \mathsf{G}_\mathsf{i}^{\Gamma}
    \end{array} \right]
    \left[\begin{array}{c}
         \mathsf{u}_{\mathsf{i}}^{I, (k)}  \\[7pt]
         \mathsf{u}_{\mathsf{i}}^{\Gamma, (k)} 
    \end{array}\right]
    =
    \left[\begin{array}{c}
         \mathsf{b}_{\mathsf{i}}^{I}  \\[7pt]
         \mathsf{b}_{\mathsf{i}}^{\Gamma} +  \sum_{j \neq i}\mathsf{L}_{\mathsf{i}} \, \mathsf{L}_{\mathsf{j}}^\top \, \mathsf{f}_{\mathsf{j}}^{(k-1)}
    \end{array}\right].
\end{equation}

This choice yields mutually independent subdomain solves at each DD iteration and is therefore naturally parallelizable. A block Gauss--Seidel strategy, in which the most recently updated Robin data are used as soon as they become available, could be adopted within the same formulation.

\subsection{A two-level domain decomposition method}

The convergence of one-level DD methods generally deteriorates as the number
of subdomains increases, since global error components are not efficiently
reduced by local iterations. We therefore introduce a coarse correction in
the conforming finite element space
\(\mathcal{V}_c\subset\mathcal W_h\subset \mathcal{V}_h\).
Let \(\mathcal T_H^\Omega\) be a coarse triangulation conforming with the
subdomain partition and nested in \(\mathcal T_h^\Omega\). We define
\[
\mathcal{V}_c
:=
\left\{
v\in C^0(\overline\Omega):
v|_K\in\mathbb P_1(K),
\quad
K\in\mathcal T_H^\Omega
\right\}.
\]
More sophisticated coarse spaces could be considered \cite{klawonn2016comparison}, but the present
geometric coarse space is sufficient for the purpose of this work.

Let \(\mathsf P:\mathcal V_c\to\mathcal W_h\) denote the canonical coarse-to-fine prolongation. In algebraic form, the Galerkin coarse operator is
\(\mathsf A_c := \mathsf P^\top\mathsf A\mathsf P\).
Given the current iterate \(\mathsf u^{(k)}\), the coarse correction is
obtained from
\[
\mathsf A_c\mathsf e_c^{(k)}
=
\mathsf P^\top\mathsf r^{(k)},
\qquad
\mathsf r^{(k)}
:=
\mathsf b-\mathsf A\mathsf u^{(k)},
\]
and the corrected iterate is
\(\mathsf u_{\rm corr}^{(k)} = \mathsf u^{(k)} + \mathsf P\mathsf e_c^{(k)}\).
Thus, the coarse solve removes the component of the current error represented
in the prolonged coarse space.

Note that the global notation above is only expository. Neither the global fine matrix \(\mathsf A\) nor the global residual \(\mathsf r^{(k)}\) needs to be assembled. The coarse matrix \(\mathsf A_c\) is assembled directly on
\(\mathcal{V}_c\), while the residual is evaluated from local contributions. If \(\mathsf P_i\) denotes the local coarse-to-fine prolongation on \(\Omega_i\) and the diagonal weighting matrices \(\mathsf D_i\) satisfy
\[
\sum_{i\in\mathcal I}\mathsf L_i^\top\mathsf D_i\mathsf L_i=\mathsf I,
\qquad\text{then}\qquad
\mathsf P^\top\mathsf r^{(k)}
=
\sum_{i\in\mathcal I}\mathsf P_i^\top\mathsf D_i\mathsf r_i^{(k)}.
\]
Hence, the coarse residual is assembled entirely from local residual
contributions and local fine-to-coarse transfers, without assembling the
global fine-scale operator. This local decomposition of the coarse residual
will provide the basis for the coarse surrogate introduced in the following.

\subsection{The 3D-1D coupled domain decomposition algorithm}

The resulting domain decomposition algorithm combines the weakly coupled 3D--1D iteration with the two-level non-overlapping DD solver for the three-dimensional problem. The corresponding high-fidelity formulation, hereafter denoted as MS-DD-FOM, is summarized in Algorithm~\ref{alg:fullweaklyDD}.

\begin{algorithm}
\caption{Two-level DD for the coupled 3D--1D problem}
\label{alg:fullweaklyDD}
%\footnotesize
\begin{algorithmic}[1]

%\State \textbf{Input:} subdomain decompositions
%\(\{\Omega_i\}_{i\in\mathcal I}\), \(\{\Lambda_i\}_{i\in\mathcal I}\),
%restriction matrices \(\{\mathsf L_i\}_{i\in\mathcal I}\), weights
%\(\{\mathsf D_i\}_{i\in\mathcal I}\), local prolongations
%\(\{\mathsf P_i\}_{i\in\mathcal I}\), Robin parameter \(\rho\), boundary data
%\(g_\Lambda\)

%\Statex

%\State \textbf{Output:} discrete solutions
%\((\mathsf u^{(k)},\mathsf u_\Lambda^{(k)})\)

\Statex

\State Choose an initial guess
\((\mathsf u^{(0)},\mathsf u_\Lambda^{(0)})\)

\State Assemble the global 1D problem, 
\(\mathsf A_c\) and
\(\{\mathsf A_i,\mathsf G_i^\Gamma\}_{i\in\mathcal I}\)

\Statex

\For{\(k=1,2,\ldots\) until convergence}

    \Statex
    \Statex \(\triangleright\) \textbf{Coarse correction}

    \State Compute local residuals
    \(\{\mathsf r_i^{(k-1)}\}_{i\in\mathcal I}\)
    \Comment{local residual evaluation}

    \State \(\displaystyle
    \mathsf r_c^{(k-1)}
    \gets
    \sum_{i\in\mathcal I}
    \mathsf P_i^\top \mathsf D_i \mathsf r_i^{(k-1)}
    \)
    \Comment{coarse residual}

    \State \(\displaystyle
    \mathsf e_c^{(k-1)}
    \gets
    \mathsf A_c^{-1}\mathsf r_c^{(k-1)}
    \)
    \Comment{coarse solve}

    \State \(\displaystyle
    \mathsf u^{(k-1)}
    \gets
    \mathsf u^{(k-1)}+\mathsf P\mathsf e_c^{(k-1)}
    \)
    \Comment{coarse update}

    \Statex
    \Statex \(\triangleright\) \textbf{Local DD iteration}

    \For{\(i\in\mathcal I\)}

        \State \(\displaystyle
        \mathsf u_i^{(k-1)}
        \gets
        \mathsf L_i\mathsf u^{(k-1)}
        \)
        \Comment{restriction to \(\Omega_i\)}

        \State \(\displaystyle
        \mathsf f_i^{(k-1)}
        \gets
        \mathsf b_i^{(k-1)}
        -
        \mathsf A_i\mathsf u_i^{(k-1)}
        +
        \rho\,\mathsf G_i^\Gamma
        \mathsf u_i^{\Gamma_i,(k-1)}
        \)
        \Comment{Robin residual evaluation}

        \State \(\displaystyle
        \mathsf f_{\mathrm{agg},i}^{(k-1)}
        \gets
        \sum_{j\neq i}
        \mathsf L_i\mathsf L_j^\top
        \mathsf f_j^{(k-1)}
        \)
        \Comment{interface aggregation}

        \State Set
        \(\hat\mu_i^{(k-1)} := (\mu_i,\mathsf f_{\mathrm{agg},i}^{(k-1)})\)
        \Comment{localized parameter with Robin interface data}

        \State \(\displaystyle
        \mathsf u_i^{(k)}
        \gets
        \mathfrak s_{h,i}(\hat\mu_i^{(k-1)})
        \)
        \Comment{local 3D solve}

    \EndFor

    \Statex
    \Statex \(\triangleright\) \textbf{Global reconstruction and 1D update}

    \State Reconstruct
    \(\mathsf u^{(k)}\) from
    \(\{\mathsf u_i^{(k)}\}_{i\in\mathcal I}\)
    \Comment{global 3D field}

    \State \(\displaystyle
    \mathsf u_\Lambda^{(k)}
    \gets
    \mathfrak s_{\Lambda}(\mathsf{u}^{(k)}, \mu)
    \)
    \Comment{global 1D solve}

\EndFor

\Statex

\State \Return \((\mathsf u^{(k)},\mathsf u_\Lambda^{(k)})\)

\end{algorithmic}
\end{algorithm}
%\clearpage

%%%%%% ROM SECTION: work in progress

%titolo della sezione ?%
%Model order reduction strategy of the domain decomposition method%

\section{Neural model order reduction integrated with domain decomposition}
\label{sec:neural-dd-rom}

The localized learning framework of Section~\ref{sec:local_representability} relies on the assumption that, after identification of the subdomains with the common reference configuration $\Omega_\star$, the local solution maps belong to a common parametrized family. The domain decomposition formulation of Section~\ref{sec:fem_discretization} provides a concrete realization of this setting for the mixed-dimensional problem considered here. We now exploit this local representability property to construct neural surrogates acting on the common reference discretization.

The surrogate architecture is determined by two structural features of the localized DD operators. First, their inputs and outputs are spatially distributed finite element quantities defined on the reference mesh. Second, the localized physical parameter contains fine-scale geometric information associated with the embedded one-dimensional structure. These observations motivate the use of mesh-informed neural architectures and, for the local solution map, a reduced representation specifically designed to retain microstructure-induced solution components.

In what follows, we will introduce three surrogate models to replace key steps of the MS-DD-FOM algorithm. First, the local solution
surrogate $S_{\mathrm{rom}}$ plays a central role, as it directly
approximates the reference local solution map $\mathfrak s_\star$ introduced in
Section~\ref{sec:local_representability}, which is the main object of the localized learning framework.
However, replacing the local solver alone would still require fine-scale
operator evaluations to compute the Robin transmission data and the local
contributions to the coarse residual. For this reason, $S_{\mathrm{rom}}$
is complemented by transmission and coarse-residual surrogates
$T_{\mathrm{rom}}$ and $C_{\mathrm{rom}}$, respectively. 

Denoting by \(\widehat\mu_i^{(k)}\in\widehat{\mathcal P}\) the localized parameter associated with the \(i\)-th DD subproblem at iteration \(k\), that collects the local geometric information together with the external data induced by the DD iteration, we define below the three surrogate models that characterize the MS-DD-ROM algorithm.

\begin{description}
\item[\(S_{\rm rom}\):] the local solution surrogate is denoted by \(S_{\rm rom}\), and approximates the solution of the local DD subproblem, provided the localized DD data and the boundary and interface data, through the operator:
\[
    S_{\rm rom}:
    \widehat{\mathcal P}
    \longrightarrow
    \mathcal V_{h,i},
    \qquad
    S_{\rm rom}(\widehat\mu_i^{(k-1)})
    \approx
    \mathsf u_i^{(k)}.
\]
\item[\(T_{\rm rom}\):] we define the transmission surrogate as a non-intrusive operator approximating the state-to-transmission map:
\[
    T_{\rm rom}:
    \widehat{\mathcal P}\times \mathcal V_{h,i}
    \longrightarrow
    \mathcal V_{h,i}' ,
    \qquad
    T_{\rm rom}
    \left(
        \widehat\mu_i^{(k)},
        \mathsf u_i^{(k)}
    \right)
    \approx
    \mathsf f_i^{(k)}.
    \label{eq:Trom-definition}
\]

\item[\(C_{\rm rom}\):] the coarse surrogate is associated with the two-level correction. Let \(\mathsf r_c^{(k)}\in\mathcal V_c'\) denote the coarse residual entering the Galerkin problem and let \(\mathsf r_c^{(k)}=\sum_{i\in\mathcal I}\mathsf r_{c,i}^{(k)}\) be its decomposition into local contributions. The surrogate
\(C_{\rm rom}\) replaces the fine-scale operations required to evaluate and
project each local residual onto the coarse space. More precisely,
\[
C_{\rm rom}:\widehat{\mathcal P}\times\mathcal V_{h,i}\longrightarrow\mathcal V_c',
\qquad
C_{\rm rom}\!\left(\widehat\mu_i^{(k)},\mathsf u_i^{(k)}\right)\approx \mathsf r_{c,i}^{(k)},
\qquad
\sum_{i\in\mathcal I}C_{\rm rom}\!\left(\widehat\mu_i^{(k)},\mathsf u_i^{(k)}\right)\approx \mathsf r_c^{(k)}.
\label{eq:Crom-definition}
\]
The deterministic coarse problem on \(\mathcal V_c\) is retained in the online algorithm.
\end{description}

The combined use 
of the three surrogate operators yields a fully non-intrusive online
MS-DD-ROM: high-fidelity local operators are queried only offline to generate
training data, whereas the online DD iteration is advanced entirely through
neural evaluations and the deterministic solution of the low-dimensional
coarse Galerkin problem on $\mathcal V_c$. Thus, the proposed MS-DD-ROM is not
merely a surrogate of the local subproblem solver, but replaces all
fine-scale operator evaluations entering the online DD iteration.

\subsection{Neural architectures of the localized DD surrogates operators}
\label{subsec:ann-minn-podminn}

Since the localized DD quantities are represented as finite element fields on the common reference mesh, we employ mesh-informed neural networks (MINNs) \cite{franco2023mesh}, whose connectivity is constrained by the spatial organization of the underlying degrees of freedom. For the local solution map, we combine this local representation with a POD-based reduced space through the POD-MINN+ construction \cite{vitullo2024nonlinear}. The relevant architectural choices are specified below for each surrogate operator.

\subsubsection{Parametrization of the discrete local problems}
Here, we make explicit the algebraic representation of the parameter encoding the localized DD data  \(\widehat\mu_i^{(k)}\in\widehat{\mathcal P}\). After fixing a finite element reference space in \(\Omega_\star\), the components of \(\widehat\mu_i^{(k)}\) used by neural surrogates are represented by coefficient vectors. 

In particular, local geometric information is obtained from the embedded one-dimensional structure \(\Lambda_i\). From a computational standpoint, \(\Lambda_i\) is represented as a metric graph whose vertices are points in the three-dimensional physical space. Since this graph representation is not directly suitable as an input for finite-element-based neural architectures, we transform it into a spatial field defined on the local three-dimensional mesh. To this purpose we use the function mapping each point in the 3D domain $\Omega_\star$ to its distance from the closest point of the 1D graph. We represent this function as a linear and piecewise continuous finite element function in $\mathcal{V}_{h,i}$. The corresponding vector of degrees of freedom is denoted by $\mathsf d_i\in\mathbb R^{N_h}$.

The local embedded-dimensional state is transferred to the three-dimensional finite element mesh through the interpolation operator
\[
    \mathsf u_{\Lambda\to h,i}^{(k)}
    =
    \mathsf E^\Lambda_{h,i}\mathsf u_{\Lambda,i}^{(k)}
    \in \mathbb R^{N_h},
\]
where $ \mathsf E^\Lambda_{h,i} \in \mathbb R^{N_h\times N_{\Lambda,i}}$ maps the local embedded-dimensional degrees of freedom onto the finite element mesh of the three-dimensional subdomain.
Moreover, we define the aggregated contributions of Robin transmission data as
\[
    \mathsf f_{\mathrm{agg},i}^{(k)}
    =
    \sum_{j\in\mathcal N(i)}
    \mathsf L_i \mathsf L_j^\top
    \mathsf f_j^{(k)}
    \in \mathbb R^{N_h}.
\]

\subsubsection{Local solution surrogate: a microstructure-informed reduced representation}
\label{sec:local_solution_surrogate}

Among the localized operators introduced above, the local solution map poses the main model-reduction challenge. Indeed, for a localized parameter
\(\widehat{\boldsymbol\mu}_i^{(k)} = (\mathsf d_i, \mathsf u_{\Lambda\to h,i}^{(k)}, \mathsf f_{\mathrm{agg},i}^{(k)})\), 
the corresponding Robin subproblem defines a map with values in the high-dimensional finite element space. Using the notation introduced in Section~\ref{sec:local_representability}, we denote the discrete reference parameter-to-solution map by
\[
    \mathfrak{s}_\star:
    \widehat{\mathcal P}
    \longrightarrow
    \mathbb R^{N_h^\star},
    \qquad
    \mathfrak{s}_\star
    \left(
        \widehat{\boldsymbol\mu}_i^{(k)}
    \right)
    =
    \mathsf u_i^{(k+1)},
\]
with \(N_h^\star\) the number of degrees of freedom of the local finite element space. The local solution surrogate \(S_{\rm rom}\) is designed to approximate this reference solution map. We start from standard projection-based reduction seeking a fixed low-dimensional linear space capable of approximating the complete family of local solutions. Let
\(\mathsf V = [ \boldsymbol\varphi_1,\ldots,\boldsymbol\varphi_{N_{rb}}] \in \mathbb R^{N_h^\star\times N_{rb}}\),
denote the proper orthogonal decomposition (POD) basis obtained from local high-fidelity solution snapshots.
The orthogonal projection of a local solution onto \(\mathcal{V}_{rb}:={\rm range}(\mathsf V)\) is \(\Pi_{rb}\mathsf u_i = \mathsf V\mathsf V^\top\mathsf u_i\),
with reduced coordinates
\(\mathsf a_i := \mathsf V^\top\mathsf u_i \in \mathbb R^{N_{rb}}\).
A non-intrusive reduced model such as the POD-NN method presented in \cite{hesthaven2018non} then approximates the parameter-to-coefficient map by a neural operator
\[
    \alpha_{\theta_S^{\rm rb}}:
    \widehat{\mathcal P}
    \longrightarrow
    \mathbb R^{N_{rb}},
\]
yielding the POD reconstruction
\(\mathsf u_{{\rm rb},i} (\widehat{\boldsymbol\mu}_i)
= \mathsf V \alpha_{\theta_S^{\rm rb}}
(\widehat{\boldsymbol\mu}_i)\).
Since the POD basis is Euclidean-orthonormal, the approximation error admits the orthogonal decomposition
\begin{equation*}
    \|
        \mathsf u_i
        -
        \mathsf V
        \alpha_{\theta_S^{\rm rb}}
        (
            \widehat{\boldsymbol\mu}_i
        )
    \|_2^2
    =
    \|
        \mathsf u_i
        -
        \mathsf V\mathsf V^\top\mathsf u_i
    \|_2^2
    +
    \|
        \mathsf V^\top\mathsf u_i
        -
        \alpha_{\theta_S^{\rm rb}}
        (
            \widehat{\boldsymbol\mu}_i
        )
    \|_2^2 .
\end{equation*}
Thus, increasing the expressive power of the reduced-coordinate regression can only reduce the second contribution. The first term is the irreducible projection error associated with the fixed space \(V_{rb}\).

This distinction is particularly important for the multiscale problems considered here. The embedded structure \(\Lambda_i\) induces localized solution features whose position and spatial organization vary with the local geometry. A fixed linear reduced space must represent all such configurations simultaneously. Consequently, microstructure-induced components may require a comparatively large number of POD modes even when, for each fixed geometry, the corresponding local feature is spatially localized and highly structured. In this regime, the limitation is not only the approximation of the reduced coefficients, but the use of a single geometry-independent approximation space.

To address this issue, we enrich the POD reconstruction by a correction explicitly driven by the local microstructure descriptor,
\[
    c_{\theta_S^{\rm c}}:
    \mathbb R^{N_h}
    \longrightarrow
    \mathbb R^{N_h},
    \qquad
    \mathsf d_i
    \longmapsto
    c_{\theta_S^{\rm c}}(\mathsf d_i).
\]
Denoting the trainable parameters as \(\theta_S= (\theta_S^{\rm rb},\theta_S^{\rm c})\), the resulting local solution surrogate is
\begin{equation}\label{eq:Srom-podminnplus}
    S_{\rm rom}^{\theta_S}
    \left(
        \mathsf d_i,
        \mathsf u_{\Lambda\to h,i}^{(k)},
        \mathsf f_{\mathrm{agg},i}^{(k)}
    \right)
    =
    \mathsf V\,
    \alpha_{\theta_S^{\rm rb}}
    \left(
        \mathsf d_i,
        \mathsf u_{\Lambda\to h,i}^{(k)},
        \mathsf f_{\mathrm{agg},i}^{(k)}
    \right)
    +
    c_{\theta_S^{\rm c}}(\mathsf d_i).
\end{equation}

The representation in \eqref{eq:Srom-podminnplus} admits a useful multiscale interpretation. The fixed POD space \(\mathcal{V}_{rb}\) captures solution components that are recurrent across the localized parameter family and can therefore be efficiently represented through a common linear basis. The closure term instead introduces a fine-scale correction whose spatial structure depends explicitly on the embedded geometry. Accordingly, for each admissible descriptor \(\mathsf d\), we may associate the enriched affine space
\[
    \mathcal V^+_{rb}(\mathsf d)
    :=
    \mathcal V_{rb}
    +
    c_{\theta_S^{\rm c}}(\mathsf d)
    =
    \left\{
        \mathsf V\mathsf a
        +
        c_{\theta_S^{\rm c}}(\mathsf d)
        :
        \mathsf a\in\mathbb R^{N_{rb}}
    \right\}.
\]
As the microstructure varies, the correction displaces the fixed POD space and generates the nonlinear approximation manifold
\[
    \mathcal C_{\theta_S^{\rm c}}
    :=
    \bigcup_{\mathsf d\in\mathcal D}
    \mathcal V_{rb}(\mathsf d)
    =
    \left\{
        \mathsf V\mathsf a
        +
        c_{\theta_S^{\rm c}}(\mathsf d)
        :
        \mathsf a\in\mathbb R^{N_{rb}},
        \ \mathsf d\in\mathcal D
    \right\}
    \subset
    \mathbb R^{N_h},
\]
where \(\mathcal D\) denotes the set of admissible local geometric descriptors. The nonlinear manifold \(\mathcal C_{\theta_S^{\rm c}}\) therefore provides a microstructure-dependent enrichment of the fixed POD space.

This viewpoint creates a conceptual connection with multiscale numerical methods. In many classical multiscale constructions, unresolved local heterogeneities modify or enrich a coarse approximation through basis functions or correctors that depend on the fine-scale medium. Here, the role of the microstructure-dependent correction is learned non-intrusively from local high-fidelity data: the map
\(\mathsf d \longmapsto c_{\theta_S^{\rm c}}(\mathsf d)\)
associates the local geometric configuration with a fine-scale solution component not efficiently represented by the common POD space. We do not claim an equivalence with classical multiscale basis or corrector constructions; rather, the analogy provides an interpretation of the learned closure as a data-driven, microstructure-informed enrichment of a coarse reduced representation.

The two components of \(S_{\rm rom}\) are implemented using mesh-informed neural networks. A mesh-informed layer can be interpreted as a trainable local operator between finite element coefficient spaces, with a strong inductive bias based on the connectivity of the underlying computational mesh. The interested reader is referred to \cite{franco2023mesh} for additional details. The reduced coefficients are predicted through a three-branch POD-MINN architecture. Each branch acts on one of the finite element fields parametrizing the local Robin problem. More precisely, we introduce three mesh-informed encoders
\(\phi_1,\phi_2,\phi_3:
    \mathbb R^{N_h}
    \longrightarrow
    \mathbb R^{N_{rb}}\),
with architecture
\[
    \phi_\ell:
    \mathbb R^{N_h}
    \xrightarrow{\;{\rm MINN}(r_1^S)\;}
    \mathbb R^{N_{\rm aux}}
    \xrightarrow{\;{\rm MINN}(r_2^S)\;}
    \mathbb R^{N_{\rm aux}}
    \xrightarrow{\;{\rm Dense}\;}
    \mathbb R^{N_{rb}},
    \qquad
    \ell=1,2,3.
\]
Here \(N_{\rm aux}\) denotes the number of degrees of freedom of an auxiliary coarse mesh used internally by the mesh-informed layers and \(r_1^S,r_2^S\) denote the interaction radii defining the sparsity pattern of the mesh-informed layers. In the numerical implementation, both local encoder layers use radius \(r_1^S=r_2^S=0.3\). The three encoders are applied as
\(\phi_1(\mathsf d_i), \ \phi_2(\mathsf u_{\Lambda\to h,i}^{(k)}), \ \phi_3(\mathsf f_{\mathrm{agg},i}^{(k)})\),
and their outputs are concatenated and passed to a dense decoder,
\begin{equation*}
    \alpha_{\theta_S^{\rm rb}}
    \left(
        \mathsf d_i,
        \mathsf u_{\Lambda\to h,i}^{(k)},
        \mathsf f_{\mathrm{agg},i}^{(k)}
    \right)
    :=
    {\rm decoder}
    \big(
        \phi_1(\mathsf d_i),
        \phi_2(\mathsf u_{\Lambda\to h,i}^{(k)}),
        \phi_3(\mathsf f_{\mathrm{agg},i}^{(k)})
    \big),
\end{equation*}
where
\({\rm decoder}:
    \mathbb R^{3N_{rb}}
    \longrightarrow
    \mathbb R^{N_{rb}}\)
has the architecture
\[
    \mathbb R^{3N_{rb}}
    \xrightarrow{\;{\rm Dense}\;}
    \mathbb R^{2N_{rb}}
    \xrightarrow{\;{\rm Dense}\;}
    \mathbb R^{2N_{rb}}
    \xrightarrow{\;{\rm Dense}\;}
    \mathbb R^{N_{rb}}.
\]
In the implementation considered here, the local encoder layers use hyperbolic tangent activations and are followed by linear maps onto the POD-coordinate space. The decoder uses \(0.1\)-leaky ReLU activations in the hidden layers and a linear output layer. The encoder weights are initialized according to the normalized initialization of \cite{GlorotBengio}, whereas the decoder weights are initialized at zero.

The microstructure-informed closure is implemented as a local mesh-informed correction,
\[
    c_{\theta_S^{\rm c}}:
    \mathbb R^{N_h}
    \xrightarrow{\;{\rm MINN(r_c^S)}}
    \mathbb R^{N_h},
\]
where the locality constraint is chosen such that changes in the descriptor of the embedded structure generate spatially localized corrections on the reference mesh. The radius \(r_c^S\) controls the spatial support of the geometry-driven correction. In the numerical implementation, we use \(r_c^S=0.2\).
Finally, the closure is initialized at zero. Hence, the initial approximation manifold coincides with the fixed POD space,
\(\mathcal C_{\theta_S^{\rm c}} = V_{rb} \ \text{when} \ c_{\theta_S^{\rm c}}=0\).
The reduced POD-MINN component is trained first and subsequently frozen. The closure is then trained to recover the solution component left unexplained by the learned POD reconstruction. In this sense, the training procedure progressively enriches the fixed reduced space with microstructure-dependent fine-scale information.

\subsubsection{Transmission surrogate architecture}
This surrogate model approximates the Robin transmission vector associated with the local state. \(T_{\rm rom}^{\theta_T}\) is implemented as the sum of a learned residual map and an explicit Robin mass term:
\[
    T_{\rm rom}^{\theta_T}
    \left(
        \mathsf u_i^{(k)}
    \right)
    :=
    T_{\rm res}^{\theta_T}
    \left(
        \mathsf u_i^{(k)}
    \right)
    +
    \rho \mathsf G_i^\Gamma
    \mathsf u_i^{\Gamma_i,(k)},
    \qquad 
    T_{\rm rom}^{\theta_T}
    \left(
        \mathsf u_i^{(k)}
    \right)
    \approx
    \mathsf f_i^{(k)}.
\]
where $\theta_T$ refers to the set of trainable parameters of the neural architectures. The learned component satisfies
\[
    T_{\rm res}^{\theta_T}:
    \mathbb R^{N_h}
    \longrightarrow
    \mathbb R^{N_h},
\]
and is implemented as a mesh-informed neural network, with architecture
\[
    \mathbb R^{N_h}
    \xrightarrow{\;{\rm MINN(r_1^T)}\;}
    \mathbb R^{N_{\rm aux}}
    \xrightarrow{\;{\rm MINN(r_2^T)}\;}
    \mathbb R^{N_{\rm aux}}
    \xrightarrow{\;{\rm MINN(r_3^T)}\;}
    \mathbb R^{N_h},
\]
where \(N_h=\dim\mathcal V_{h,i}\), \(N_{\rm aux}\) is the number of degrees of freedom of an auxiliary mesh and  \(r_1^T,r_2^T,r_3^T\) denote the interaction radii defining the sparsity pattern of the transmission surrogate layers. In the numerical implementation, we consider \(r_1^T=0.3\), \(r_2^T=0.2\), and \(r_3^T=0.15\). The local layers use hyperbolic tangent activations, while the last layer is linear so that the network can represent signed residual values \cite{Kutnyok}. The trainable parameters are collected in \(\theta_T\), and are initialized following the normalized initialization proposed in \cite{GlorotBengio}.
Although the operator-level map depends on the localized parameter \(\widehat\mu_i^{(k)}\), in the present algebraic implementation the network is queried as a state-to-residual map. The dependence on the local microstructure and on the embedded-dimensional forcing is therefore encoded through the MS-DD-FOM trajectories used to generate the training distribution.

\subsubsection{Coarse residual surrogate architecture}
The third surrogate approximates the algebraic coarse residual contribution entering the two-level correction. We recall that the exact local contribution is given by
\[
    \mathsf r_{c,i}^{(k)}
    =
    \mathsf P_i^\top
    \mathsf D_i
    (\mathsf b_i^{(k)}
    -
    \mathsf A_i\mathsf u_i^{(k)}).
\]
Thus, as previously defined, the coarse surrogate is trained to approximate the composed map from the local state to the local coarse residual contribution and the global coarse right-hand side is then assembled by summing the predicted local contributions,
\[
    C_{\rm rom}^{\theta_C}:
    \mathbb R^{N_h}
    \longrightarrow
    \mathbb R^{N_c},
    \qquad
    C_{\rm rom}^{\theta_C}
    \left(
        \mathsf u_i^{(k)}
    \right)
    \approx
    \mathsf r_{c,i}^{(k)},
    \qquad
    \sum_{i\in\mathcal I}
    C_{\rm rom}^{\theta_C}
    \left(
        \mathsf u_i^{(k)}
    \right)
    \approx
    \mathsf r_c^{(k)},
\]
where \(N_c=\dim\mathcal V_c\) is the dimension of the coarse residual coefficient vector. 

Denoting by $\theta_C$ the set of trainable parameters, the map \(C_{\rm rom}^{\theta_C}\) is implemented as a MINN encoder followed by dense layers. In abstract form,
\[
    \mathbb R^{N_h}
    \xrightarrow{\;{\rm MINN(r_1^C)}\;}
    \mathbb R^{N_{\rm aux}}
    \xrightarrow{\;{\rm MINN(r_2^C)}\;}
    \mathbb R^{N_{\rm aux}}
    \xrightarrow{\;{\rm Dense}\;}
    \mathbb R^{m_1}
    \xrightarrow{\;{\rm Dense}\;}
    \mathbb R^{m_2}
    \xrightarrow{\;{\rm Dense}\;}
    \mathbb R^{N_c},
\]
where \(r_1^C\) and \(r_2^C\) define the supports of the local mesh-informed encoder layers. In the numerical implementation, these are set to \(r_1^C=r_2^C=0.3\). As for the transmission map, the coarse surrogate is queried as a state-to-residual map. The dependence on the local geometry and on the embedded-dimensional forcing is therefore inherited through the MS-DD-ROM
training trajectories.

The input-output structure of the three surrogates is summarized in Table~\ref{tab:surrogate-tasks}.

\begin{table}[htb!]
\centering
\begin{tabular}{lll}
\toprule
Surrogate & Inputs & Outputs \\
\midrule
\(T_{\rm rom}\) &
\(\mathsf u_{i}^{(k)}\) &
local Robin transmission datum \(\mathsf f_{i}^{(k)}\) \\
\(S_{\rm rom}\) &
\((\mathsf d_i,\mathsf u_{\Lambda\to h,i}^{(k)},\mathsf f_{{\rm agg},i}^{(k)})\) &
next local state \(\mathsf u_{i}^{(k+1)}\) \\
\(C_{\rm rom}\) &
\(\mathsf u_{i}^{(k)}\) &
local coarse residual \( \mathsf r_{c,i}^{(k)}\) \\
\bottomrule
\end{tabular}
\caption{Input-output structure of the three surrogate tasks.}
\label{tab:surrogate-tasks}
\end{table}

The definition of the neural architectures alone is not sufficient to guarantee a reliable MS-DD-ROM iteration. The surrogate maps are data-driven approximations and should not be regarded as distribution-invariant operators: their accuracy is tied to the distribution of the local states, residuals, and interface data used during training. Since these quantities are themselves generated by an iterative DD algorithm, replacing one high-fidelity component by a surrogate changes the input distribution seen by the remaining components. The training procedure must therefore be designed at the algorithmic level, so that each surrogate is trained on data representative of the MS-DD-ROM regime in which it will be queried online, to reduce the offline-online distribution shift between the offline training data and the online MS-DD-ROM trajectories. This motivates the cascaded workflow described next.

\subsection{Cascaded training strategy for the neural surrogates} 
\label{sec:cascade_training}
The three neural surrogate operators are not trained independently on arbitrary local samples. They are trained in cascade, following the order in which the corresponding quantities are generated and used within the two-level DD iteration. This choice is motivated by the perturbation viewpoint introduced in Section~\ref{sec:local_perturbation_theory}. Once a surrogate is inserted into the DD loop, the subsequent states, interface data, and residual quantities are no longer those produced by the exact MS-DD-FOM, but by a perturbed iteration. Training the next surrogate on exact MS-DD-FOM data only would therefore control its error on the wrong input regime and would increase the effective surrogate-induced perturbation entering the local DD update. Therefore, each training set must be generated from the algorithmic configuration that matches the online use of the corresponding surrogate.

%The organization of the cascade training procedure, compltely detailed in the present section and in Subsection~\ref{subsec:Training_losses}, is  summarized in Figure~\ref{fig:cascade}. 
The cascade training procedure is summarized in Figure~\ref{fig:cascade} and detailed below, including the definition of training losses (see Subsection~\ref{subsec:Training_losses}).
\begin{figure}
    \centering
    \includegraphics[width=0.9\linewidth]{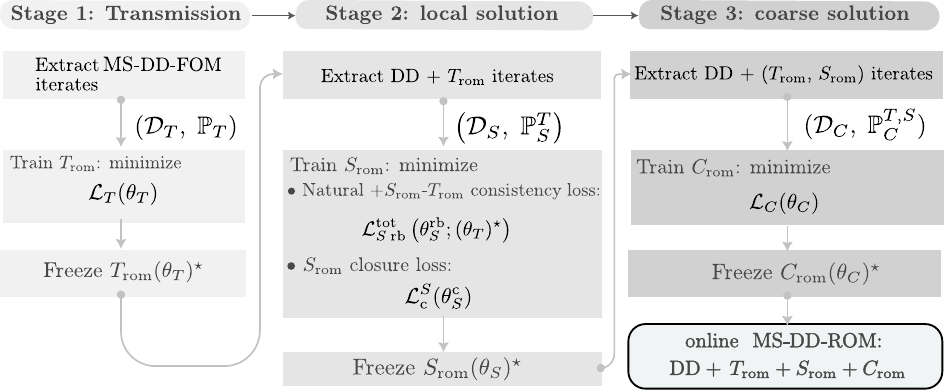}
    \caption{Cascaded training workflow. The transmission surrogate is trained on MS-DD-FOM trajectories; the local solution surrogate is trained with the transmission surrogate already deployed; the coarse-residual surrogate is trained on MS-DD-ROM trajectories generated with both transmission and solution surrogates active.}
    \label{fig:cascade}
\end{figure}
The ordering is not meant to be unique; it is an incremental training choice in which the high-fidelity components are replaced one at a time and each surrogate is trained under the input regime induced by the components already deployed. Alternative training orderings, or even joint fine-tuning strategies, are in principle compatible with the same convergence argument, but are not investigated in this work. First, \(T_{\rm rom}\) is trained on MS-DD-FOM data to approximate the Robin transmission datum \(\mathsf f_i^{(k)}\), by learning its Neumann/DtN residual component and retaining the Robin trace term explicitly as
\[
    \widetilde{\mathsf f}_i^{(k)}
    =
    T_{\rm rom}^{\theta_T}
    \left(
        \mathsf u_i^{(k)}
    \right).
\]
Operationally, the deployed transmission map outputs the Robin datum \(\mathsf f_i^{(k)}\). Since the Robin trace contribution is deterministic, the trainable component is equivalently trained on the residual contribution \(\mathsf r_i^{(k)}\). Taking into account data distributions induced by the corresponding DD trajectories, let \(\mathbb P_T\) denote the distribution of the transmission training pairs generated by the MS-DD-FOM, such that
\[
    \left(
        \mathsf u_i^{(k)};
        \mathsf f_i^{(k)}
    \right)
    \sim
    \mathbb P_T .
\]
Second, \(S_{\rm rom}^{\theta_S}\) is trained on data generated by a MS-DD-FOM in which \(T_{\rm rom}^{\theta_T}\) is already active, reconstructing the Robin transmission data as
\[
    \widetilde{\mathsf f}_{\mathrm{agg},i}^{(k)}
    =
    \sum_{j\in\mathcal N(i)}
    \mathsf L_i\mathsf L_j^\top \widetilde{\mathsf f}_j^{(k)}.
\]
Thus, the local solver surrogate \(S_{\rm rom}^{\theta_S}\) is exposed during training to aggregated Robin data already perturbed by the learned transmission map $T_{\rm rom}^{\theta_T}$. The corresponding training pairs are then induced by the resulting distribution \(\mathbb P_S^T\),
\[
    \left(
        \mathsf d_i,
        \mathsf u_{\Lambda\to h,i}^{(k)},
        \widetilde{\mathsf f}_{\mathrm{agg},i}^{(k)};
        \mathsf u_i^{(k+1)}
    \right)
    \sim
    \mathbb P_S^T.
\]

Finally, \(C_{\rm rom}\) is trained after both \(T_{\rm rom}\) and
\(S_{\rm rom}\) have been deployed, because the coarse residual is evaluated on MS-DD-ROM states produced by the reduced transmission and local solution steps. This entails
\[
    \mathsf r_{c,i}^{(k)}
    =
    \mathsf P_i^\top
    \mathsf D_i
    \widetilde{\mathsf r}_i^{(k)},
    \qquad
    \widetilde{\mathsf r}_i^{(k)}
    =
    \mathsf b_i^{(k)}
    -
    \mathsf A_i\widetilde{\mathsf u}_i^{(k)} .
\]
The MS-DD-ROM trajectory induces the coarse-residual distribution
\(\mathbb P_C^{T,S}\), so that the corresponding training pairs are
\[
    \left(
        \widetilde{\mathsf u}_{i}^{(k)};
        \mathsf P_i^\top \mathsf D_i
        \widetilde{\mathsf r}_{i}^{(k)}
    \right)
    \sim
    \mathbb P_C^{T,S}.
\]
The empirical losses approximate risks associated with the distributions actually visited online. Training \(S_{\rm rom}\) or \(C_{\rm rom}\) only on exact MS-DD-FOM trajectories would control the error on distributions that are not necessarily those encountered by the deployed MS-DD-ROM.
The resulting cascade
%—summarized in Table~\ref{tab:training-cascade}—
reduces the offline-online distribution shift between the offline training data and the online MS-DD-ROM trajectories. Equivalently, it aims at controlling the effective surrogate-induced perturbation entering the DD iteration, rather than only minimizing isolated regression errors on states produced by the fully assembled MS-DD-FOM.

\begin{comment}
\begin{table}[htb!]
\centering
\begin{tabular}{llll}
\toprule
Stage & Algorithm generating data & Induced law & Training pairs \\
\midrule
1 &
DD-FOM &
\(\mathbb P_T\) &
\(\mathsf u_i^{(k)}\mapsto \mathsf f_i^{(k)}\) \\
2 &
DD-FOM with \(T_{\rm rom}\) &
\(\mathbb P_S^T\) &
\((\mathsf d_i,\mathsf u_{\Lambda\to h,i}^{(k)},\widetilde{\mathsf f}_{\mathrm{agg},i}^{(k)})\mapsto \mathsf u_i^{(k+1)}\) \\
3 &
DD-ROM with \(T_{\rm rom},S_{\rm rom}\) &
\(\mathbb P_C^{T,S}\) &
\(\widetilde{\mathsf u}_i^{(k)}
\mapsto
\mathsf P_i^\top \mathsf D_i\widetilde{\mathsf r}_i^{(k)}\) \\
\bottomrule
\end{tabular}
\caption{Cascaded training workflow for the DD-ROM operators. Each surrogate is trained on data generated by the algorithmic configuration in which it is used online.}
\label{tab:training-cascade}
\end{table}
\end{comment}

\begin{comment}
\begin{figure}[h!]
    \centering
    \begingroup
    \def\svgwidth{\textwidth}
    \input{Figures/cascade_training.pdf_tex}
    \endgroup
    \caption{\color{red} v0, link alle equazionin da aggiungere; troppo piccolo, da riorganizzare\color{black}}
    \label{fig:cascade-training}
\end{figure}

\begin{figure}[h!]
    \centering
    \begingroup
    \def\svgwidth{\textwidth}
    \input{Figures/cascade_training_v1.pdf_tex}
    \endgroup
    \caption{\color{red} versione 1, link alle equazionin da aggiustare...forse invece di riportare l'intera espressione della loss, posso fare riferimento all'argim problem o aequazioni affini\color{black}}
    \label{fig:cascade-training}
\end{figure}
\end{comment}
\subsubsection{Training losses of the neural surrogates}
\label{subsec:Training_losses}
We now describe the loss functions used to train the neural surrogates. In the following, errors between finite element fields defined on the local subdomain are measured in a discrete \(L^2(\Omega_\star)\)-norm,
\[
    \|\mathsf v\|_{L^2(\Omega_\star),h}^2
    :=
    \mathsf v^\top \mathsf G \mathsf v,
\]
where \(\mathsf G\) is the local mass matrix. This convention is used for local solution fields, residual contributions, and Robin transmission vectors represented on the local finite element mesh. Euclidean norms are retained for reduced POD coefficients.
The datasets are collected over local subdomains, DD iterations, and
geometric realizations. We use \(i\in\mathcal I\) to index the local subdomain, \(k\in\mathcal K_g\) to index the DD iteration along the trajectory associated with a given geometry, and \(g\in\mathcal G\) to index the sampled embedded graph configuration. Accordingly, each sample is associated with a local graph \(\Lambda_i^{(g)}\), its descriptor \(\mathsf d_i^{(g)}\), and the corresponding DD trajectory generated for that geometry.

\paragraph{Task 1: learning the transmission surrogate}
\label{subsec:task-Trom}

Although the deployed transmission surrogate outputs the Robin datum \(\mathsf f_i^{(k)}\), only its residual component is learned. The deterministic Robin trace contribution is added after neural inference, as described in the architecture of \(T_{\rm rom}\). The corresponding training set is
\[
    \mathcal D_T
    =
    \left\{
    \left(
        \mathsf u_i^{(k)};
        \mathsf f_i^{(k)}
    \right)
    \right\}_{i\in\mathcal I,\; k\in\mathcal K_g,\; g\in\mathcal G}.
\]
The empirical loss is
\begin{equation}
    \mathcal L_T(\theta_T)
    =
    \frac{1}{|\mathcal D_T|}
    \sum_{\mathcal D_T}
    \left\|
    T_{\rm rom}^{\theta_T}
    \left(
        \mathsf u_i^{(k)}
    \right)
    -
    \mathsf f_i^{(k)}
    \right\|_{L^2(\Omega_\star),h}^2,
    \label{eq:loss-Trom}
\end{equation}
recalling that
\[
    T_{\rm rom}^{(\theta_T)}
    \left(
        \mathsf u_i^{(k)}
    \right)
    =
    T_{\rm res}^{(\theta_T)}
    \left(
        \mathsf u_i^{(k)}
    \right)
    +
    \rho \mathsf G_i^\Gamma
    \mathsf u_i^{\Gamma_i,(k)}.
    \label{eq:Trom-deployed}
\]
The trained parameters are defined by
\[
    (\theta_T)^\star
    \in
    \arg\min_{\theta_T}
    \mathcal L_T(\theta_T).
    \label{eq:Trom-optimal-parameters}
\]

\paragraph{Task 2: learning the local solution operator}
\label{subsec:task-Srom}

The local solution surrogate is trained on data generated by the cascaded DD
configuration in which the transmission surrogate is already active:
\[
    \mathcal D_S
    =
    \left\{
    \left(
        \mathsf d_i,
        \mathsf u_{\Lambda\to h,i}^{(k)},
        \widetilde{\mathsf f}_{\mathrm{agg},i}^{(k)};
        \mathsf u_i^{(k+1)}
    \right)
    \right\}_{i\in\mathcal I,\; k\in\mathcal K_g,\; g\in\mathcal G}.
\]
The training of \(S_{\rm rom}\) is performed in two steps. First, the POD-MINN component is trained while the closure branch is inactive, namely
\(\mathsf c_{\theta_S^{\rm c}}=0\). Here
\(
    \mathsf a_i^{(k+1)}
    :=
    \mathsf V^\top \mathsf u_i^{(k+1)}
    \in\mathbb R^{N_{rb}}
\)
denotes the POD coefficient vector of the high-fidelity local solution. The data-fidelity term is imposed on the
reduced coordinates:
\[
    \mathcal L_{\rm rb}^S(\theta_S^{\rm rb})
    =
    \frac{1}{|\mathcal D_S|}
    \sum_{\mathcal D_S}
    \left\|
    \alpha_{\theta_S^{\rm rb}}
    \left(
        \mathsf d_i,
        \mathsf u_{\Lambda\to h,i}^{(k)},
        \widetilde{\mathsf f}_{\mathrm{agg},i}^{(k)}
    \right)
    -
    \mathsf a_i^{(k+1)}
    \right\|_2^2 .
    \label{eq:loss-rb-Srom}
\]
This reduced data-fidelity term is complemented by two local consistency losses involving the pretrained transmission surrogate \(T_{\rm rom}^{(\theta_T)^\star}\).
During this stage, \((\theta_T)^\star\) is kept fixed and only the parameters \(\theta_S^{\rm rb}\) of the POD-MINN component are updated.

Let
\[
    \widehat{\mathsf u}_{i,{\rm rb}}^{(k+1)}
    :=
    \mathsf V
    \alpha_{\theta_S^{\rm rb}}
    \left(
        \mathsf d_i,
        \mathsf u_{\Lambda\to h,i}^{(k)},
        \widetilde{\mathsf f}_{\mathrm{agg},i}^{(k)}
    \right)
\]
be the reconstructed POD-MINN prediction. The associated Robin transmission
datum is evaluated through the frozen transmission surrogate:
\[
    \widehat{\mathsf f}_{i,{\rm rb}}^{(k+1)}
    =
    T_{\rm rom}^{(\theta_T)^\star}
    \left(
        \widehat{\mathsf u}_{i,{\rm rb}}^{(k+1)}
    \right).
\]
The first consistency term enforces compatibility in the direction
\[
    \widetilde{\mathsf f}_{\mathrm{agg},i}^{(k)}
    \xrightarrow{\;S_{\rm rom}^{\theta_S^{\rm rb}}\;}
    \widehat{\mathsf u}_{i,{\rm rb}}^{(k+1)}
    \xrightarrow{\;T_{\rm rom}^{(\theta_T)^\star}\;}
    \widehat{\mathsf f}_{i,{\rm rb}}^{(k+1)} .
\]
It is defined as
\[
    \mathcal L_{TS}(\theta_S^{\rm rb};(\theta_T)^\star)
    =
    \frac{1}{|\mathcal D_S|}
    \sum_{\mathcal D_S}
    \left\|
    \widehat{\mathsf f}_{i,{\rm rb}}^{(k+1)}
    -
    \mathsf f_i^{(k+1)}
    \right\|_{L^2(\Omega_\star),h}^2.
    \label{eq:loss-TS}
\]
The second consistency term acts in the reverse direction. Starting from the
high-fidelity local state \(\mathsf u_i^{(k)}\), we evaluate the frozen
transmission surrogate
\[
    \overline{\mathsf f}_i^{(k)}
    =
    T_{\rm rom}^{(\theta_T)^\star}
    \left(
        \mathsf u_i^{(k)}
    \right).
\]
After aggregation,
\[
    \overline{\mathsf f}_{\mathrm{agg},i}^{(k)}
    =
    \sum_{j\in\mathcal N(i)}
    \mathsf L_i\mathsf L_j^\top
    \overline{\mathsf f}_j^{(k)} ,
\]
the reverse consistency loss is defined as
\[
    \mathcal L_{ST}(\theta_S^{\rm rb};(\theta_T)^\star)
    =
    \frac{1}{|\mathcal D_S|}
    \sum_{\mathcal D_S}
    \left\|
    \mathsf V
    \alpha_{\theta_S^{\rm rb}}
    \left(
        \mathsf d_i,
        \mathsf u_{\Lambda\to h,i}^{(k)},
        \overline{\mathsf f}_{\mathrm{agg},i}^{(k)}
    \right)
    -
    \mathsf u_i^{(k+1)}
    \right\|_{L^2(\Omega_\star),h}^2 .
    \label{eq:loss-ST}
\]
The first training step is therefore defined by
\[
    \left(\theta_S^{\rm rb}\right)^\star
    \in
    \arg\min_{\theta_S^{\rm rb}}
    \mathcal L_{S,{\rm rb}}^{\rm tot}
    \left(\theta_S^{\rm rb};(\theta_T)^\star\right),
\]
with
\begin{equation}
    \mathcal L_{S,{\rm rb}}^{\rm tot}
    \left(\theta_S^{\rm rb};(\theta_T)^\star\right)
    =
    \omega_{\rm rb}\mathcal L_{\rm rb}^S(\theta_S^{\rm rb})
    +
    \omega_{TS}\mathcal L_{TS}
    \left(\theta_S^{\rm rb};(\theta_T)^\star\right)
    +
    \omega_{ST}\mathcal L_{ST}
    \left(\theta_S^{\rm rb};(\theta_T)^\star\right).
    \label{eq:loss-Srom-rb-total}
\end{equation}
The two consistency contributions act as local compatibility constraints between the learned reduced solution operator and the pretrained transmission surrogate, rather than as a joint training of the two maps.
Once the POD-MINN component has been trained, its parameters are frozen and a
second training step is performed for the closure branch. The closure is trained
to recover the component of the high-fidelity local solution not represented by
the learned POD reconstruction, using the local geometric descriptor as input.
For each training sample, we define the closure target as
\[
    \mathsf c_i^{(k+1)}
    :=
    \mathsf u_i^{(k+1)}
    -
    \mathsf V
    \alpha_{(\theta_S^{\rm rb})^\star}
    \left(
        \mathsf d_i,
        \mathsf u_{\Lambda\to h,i}^{(k)},
        \widetilde{\mathsf f}_{\mathrm{agg},i}^{(k)}
    \right).
\]
The closure loss is
\begin{equation}
    \mathcal L_{\rm c}^S(\theta_S^{\rm c})
    =
    \frac{1}{|\mathcal D_S|}
    \sum_{\mathcal D_S}
    \left\|
    c_{\theta_S^{\rm c}}
    \left(
        \mathsf d_i
    \right)
    -
    \mathsf c_i^{(k+1)}
    \right\|_{L^2(\Omega_\star),h}^2 .
    \label{eq:loss-closure-Srom}
\end{equation}
The closure parameters are defined by
\[
    \left(\theta_S^{\rm c}\right)^\star
    \in
    \arg\min_{\theta_S^{\rm c}}
    \mathcal L_{\rm c}^S(\theta_S^{\rm c}),
\]
with \(\left(\theta_S^{\rm rb}\right)^\star\) kept fixed.
After the two training steps, the deployed local solver surrogate is
\[
    S_{\rm rom}^{\theta_S^\star}
    \left(
        \mathsf d_i,
        \mathsf u_{\Lambda\to h,i}^{(k)},
        \widetilde{\mathsf f}_{\mathrm{agg},i}^{(k)}
    \right)
    =
    \mathsf V
    \alpha_{(\theta_S^{\rm rb})^\star}
    \left(
        \mathsf d_i,
        \mathsf u_{\Lambda\to h,i}^{(k)},
        \widetilde{\mathsf f}_{\mathrm{agg},i}^{(k)}
    \right)
    +
    c_{(\theta_S^{\rm c})^\star}
    \left(
        \mathsf d_i
    \right),
\]
where
\[
    \theta_S^\star
    :=
    \left(
        (\theta_S^{\rm rb})^\star,
        (\theta_S^{\rm c})^\star
    \right).
\]
\paragraph{Task 3: learning the coarse residual operator}
\label{subsec:task-Crom}

The third surrogate approximates the local contribution to the algebraic coarse residual entering the two-level correction. The training set is
\[
    \mathcal D_C
    =
    \left\{
    \left(
        \widetilde{\mathsf u}_i^{(k)};
        \mathsf r_{c,i}^{(k)}
    \right)
    \right\}_{i\in\mathcal I,\; k\in\mathcal K_g,\; g\in\mathcal G},
\]
with
\[
    \widetilde{\mathsf r}_i^{(k)}
    =
    \mathsf b_i^{(k)}
    -
    \mathsf A_i\widetilde{\mathsf u}_i^{(k)},
    \qquad
    \mathsf r_{c,i}^{(k)}
    =
    \mathsf P_i^\top
    \mathsf D_i
    \widetilde{\mathsf r}_i^{(k)}.
\]
Here \(\widetilde{\mathsf u}_i^{(k)}\) denotes a local state generated by the MS-DD-ROM trajectory with the trained \(T_{\rm rom}\) and \(S_{\rm rom}\) already deployed. The corresponding target in the training set is computed offline by applying the exact local residual evaluation and the exact coarse projection.
The empirical loss is imposed on the local coarse residual contributions:
\begin{equation}
    \mathcal L_C(\theta_C)
    =
    \frac{1}{|\mathcal D_C|}
    \sum_{\mathcal D_C}
    \left\|
    C_{\rm rom}^{\theta_C}
    \left(
        \widetilde{\mathsf u}_i^{(k)}
    \right)
    -
    \mathsf r_{c,i}^{(k)}
    \right\|_2^2 .
    \label{eq:loss-Crom}
\end{equation}
The training of the coarse residual surrogate is therefore defined by
\[
    (\theta_C)^\star
    \in
    \arg\min_{\theta_C}
    \mathcal L_C(\theta_C).
    \label{eq:Crom-optimal-parameters}
\]
After training, the deployed coarse residual surrogate is denoted by \(C_{\rm rom}^{(\theta_C)^\star}\). When no ambiguity arises, we keep the lighter notation \(C_{\rm rom}^{\theta_C}\).
During the online stage, the coarse right-hand side is assembled from the predicted local contributions,
\[
    \widetilde{\mathsf r}_c^{(k)}
    =
    \sum_{i\in\mathcal I}
    C_{\rm rom}^{(\theta_C)^\star}
    \left(
        \widetilde{\mathsf u}_i^{(k)}
    \right),
\]
and the coarse correction is computed as
\[
    \mathsf A_c\widetilde{\mathsf e}_c^{(k)}
    =
    \widetilde{\mathsf r}_c^{(k)},
    \qquad
    \widetilde{\mathsf u}^{(k)}
    \leftarrow
    \widetilde{\mathsf u}^{(k)}
    +
    \mathsf P\widetilde{\mathsf e}_c^{(k)}.
\]
Thus, \(C_{\rm rom}\) avoids both the fine-scale residual evaluation and the fine-scale residual projection required to form the coarse right-hand side, while retaining the deterministic coarse solve on \(\mathcal V_c\).
The three optimized parameter sets
\[
    (\theta_T)^\star,\qquad
    \theta_S^\star :=
    \left((\theta_S^{\rm rb})^\star,(\theta_S^{\rm c})^\star\right),
    \qquad
    (\theta_C)^\star
\]
define the deployed MS-DD-ROM operators used in the online iteration described
below.

\subsection{Online MS-DD-ROM algorithm}
\label{subsec:online-ddrom}

The online MS-DD-ROM algorithm is obtained by replacing the exact local Neumann/DtN residual evaluation, the local subdomain solve, and the local coarse-residual projection by \(T_{\rm rom}\), \(S_{\rm rom}\), and \(C_{\rm rom}\), respectively. The fine-scale local matrices \(\mathsf A_i\) are used only offline to generate training data. During the online phase, local residuals and local solutions are produced by neural inference, while the coarse solve is retained as a small deterministic algebraic correction.

\begin{algorithm}
\caption{MS-DD-ROM framework for the coupled 3D--1D problem}
\label{alg:SURROGATE_DD}
\begin{algorithmic}[1]
\Require \(T_{\rm rom}\), \(S_{\rm rom}\), \(C_{\rm rom}\);
\(\{\mathsf d_i\}_{i\in\mathcal I}\);
initial states \((\widetilde{\mathsf u}^{(0)},\mathsf u_\Lambda^{(0)})\);
coarse matrix \(\mathsf A_c\).

\For{\(k=1,2,\ldots\) until convergence}

    \Statex
    \Statex \(\triangleright\) \textbf{Coarse correction}

    \For{\(i\in\mathcal I\)}
        \State \(\displaystyle
        \widetilde{\mathsf u}_i^{(k-1)}
        \gets
        \mathsf L_i\widetilde{\mathsf u}^{(k-1)}
        \)
        \Comment{restriction to \(\Omega_i\)}

        \State \(\displaystyle
        \widetilde{\mathsf r}_{c,i}^{(k-1)}
        \gets
        C_{\rm rom}
        \left(
            \widetilde{\mathsf u}_i^{(k-1)}
        \right)
        \)
        \Comment{surrogate coarse-residual contribution}
    \EndFor

    \State \(\displaystyle
    \widetilde{\mathsf r}_c^{(k-1)}
    \gets
    \sum_{i\in\mathcal I}
    \widetilde{\mathsf r}_{c,i}^{(k-1)}
    \)
    \Comment{coarse residual}

    \State \(\displaystyle
    \widetilde{\mathsf e}_c^{(k-1)}
    \gets
    \mathsf A_c^{-1}
    \widetilde{\mathsf r}_c^{(k-1)}
    \)
    \Comment{coarse solve}

    \State \(\displaystyle
    \widetilde{\mathsf u}^{(k-1)}
    \gets
    \widetilde{\mathsf u}^{(k-1)}
    +
    \mathsf P\widetilde{\mathsf e}_c^{(k-1)}
    \)
    \Comment{coarse update}

    \Statex
    \Statex \(\triangleright\) \textbf{Local DD iteration}

    \For{\(i\in\mathcal I\)}
        \State \(\displaystyle
        \widetilde{\mathsf u}_i^{(k-1)}
        \gets
        \mathsf L_i\widetilde{\mathsf u}^{(k-1)}
        \)
        \Comment{restriction to \(\Omega_i\)}

        \State \(\displaystyle
        \widetilde{\mathsf f}_i^{(k-1)}
        \gets
        T_{\rm rom}
        \left(
            \widetilde{\mathsf u}_i^{(k-1)}
        \right)
        \)
        \Comment{surrogate Robin datum}
    \EndFor

    \For{\(i\in\mathcal I\)}
        \State \(\displaystyle
        \widetilde{\mathsf f}_{{\rm agg},i}^{(k-1)}
        \gets
        \sum_{j\in\mathcal N(i)}
        \mathsf L_i\mathsf L_j^\top
        \widetilde{\mathsf f}_j^{(k-1)}
        \)
        \Comment{interface aggregation}

        \State \(\displaystyle
        \mathsf u_{\Lambda\to h,i}^{(k-1)}
        \gets
        \mathsf E^\Lambda_{h,i}
        \mathsf u_{\Lambda,i}^{(k-1)}
        \)
        \Comment{embedded-to-bulk interpolation}

        \State \(\displaystyle
        \widetilde{\mathsf u}_i^{(k)}
        \gets
        S_{\rm rom}
        \left(
            \mathsf d_i,
            \mathsf u_{\Lambda\to h,i}^{(k-1)},
            \widetilde{\mathsf f}_{{\rm agg},i}^{(k-1)}
        \right)
        \)
        \Comment{surrogate local 3D solve}
    \EndFor

    \Statex
    \Statex \(\triangleright\) \textbf{Global reconstruction and 1D update}

    \State Reconstruct
    \(\widetilde{\mathsf u}^{(k)}\) from
    \(\{\widetilde{\mathsf u}_i^{(k)}\}_{i\in\mathcal I}\)
    \Comment{global 3D field}

    \State \(\displaystyle
    \mathsf u_\Lambda^{(k)}
    \gets
    \mathfrak s_\Lambda
    \left(
        \widetilde{\mathsf u}^{(k)},\mu
    \right)
    \)
    \Comment{global 1D solve}

\EndFor

\State \Return
\((\widetilde{\mathsf u}^{(k)},\mathsf u_\Lambda^{(k)})\)

\end{algorithmic}
\end{algorithm}

The computational advantage of the presented algorithm is that all fine-scale matrix-dependent operations are replaced by neural inference. The only algebraic solve retained online is the coarse correction, whose dimension is \(\dim(\mathcal V_c)\) and is therefore independent of the local fine-scale discretization.

The resulting MS-DD-ROM framework is therefore designed to be fast, accurate, and generalizable: fast, because online local solves and residual evaluations are replaced by neural evaluations; accurate, because the learned maps approximate the local high-fidelity operators in norms aligned with the DD stability estimate; and generalizable, because the same reference surrogate is trained on a localized parameter space representative of all subdomain configurations encountered by the global algorithm.

\section{Validation of the MS-DD-ROM framework}
\label{sec:oxygen_model}

We now assess the MS-DD-ROM framework on the mixed-dimensional oxygen-perfusion model introduced in Section~\ref{sec:3D1D_problem_localization}. The purpose of this section is not only to evaluate the accuracy of the surrogate approximation, but also to verify the main methodological properties developed in the previous sections: reuse of a common local surrogate across unseen microstructures, stability of the surrogate-perturbed DD iteration, effectiveness of the two-level correction under increasing problem size, and reduction of the online computational cost.
%We apply MS-DD-ROM to a challenging problem describing oxygen-perfusion using the mixed-dimensional formulation introduced in Section~\ref{sec:3D1D_problem_localization}.
The 1D manifold $\Lambda$ models the blood vessels, while the surrounding 3D region $\Omega$ represents the interstitial tissue through which oxygen diffuses. We use the problem definition previously proposed by Vidotto \textit{et al}~\cite{vidotto2019hybrid}:
\begin{equation}
    \begin{cases}
        -\nabla \cdot \left( \rho_{\text{int}} \dfrac{K_{t}}{\mu_{\text{int}}} \nabla p^{t} \right)
        = (2\pi \epsilon)L_{\text{cap}} \rho_{\text{int}}
        \left( p^V - \overline{\mathcal{T}}_\Lambda p^{t} - (\pi_{p} - \pi_{\text{int}}) \right) \delta_{\Lambda}
        & \text{in}\ \Omega, \\[10pt]
        -\dfrac{d}{ds} \left( \rho_{\text{bl}} (\pi \epsilon^{2}) \dfrac{K_{V}}{\mu_{\text{bl}}} \dfrac{dp^{V}}{ds} \right)
        = (2\pi \epsilon) L_{\text{cap}} \rho_{\text{int}}
        \left( \overline{\mathcal{T}}_\Lambda p^{t} - p^V + (\pi_{p} - \pi_{\text{int}}) \right)
        & \text{in}\ \Lambda, \\[10pt]
        -\rho_{\text{int}} \dfrac{K_{t}}{\mu_{\text{int}}} \nabla p^{t} \cdot \mathbf{n} = \beta(p^{t} - p_0)
        & \text{on}\ \partial\Omega, \\[10pt]
        p^{V} = p_\text{dir}^{V} & \text{on}\ \partial\Lambda,
    \end{cases}
\label{eq:OxModelVid}
\end{equation}
where the unknowns are $(p^t, p^V)$, denoting respectively the \textit{tissue} and \textit{vascular} pressures.  
The physical constants involved in the model are summarized in Table~\ref{tab:parameters}. A visualization of the solution of the problem can be found in Fig~\ref{fig:show_solution}.
\begin{figure}[h!]
    \centering
    \includegraphics[width=0.80\linewidth]{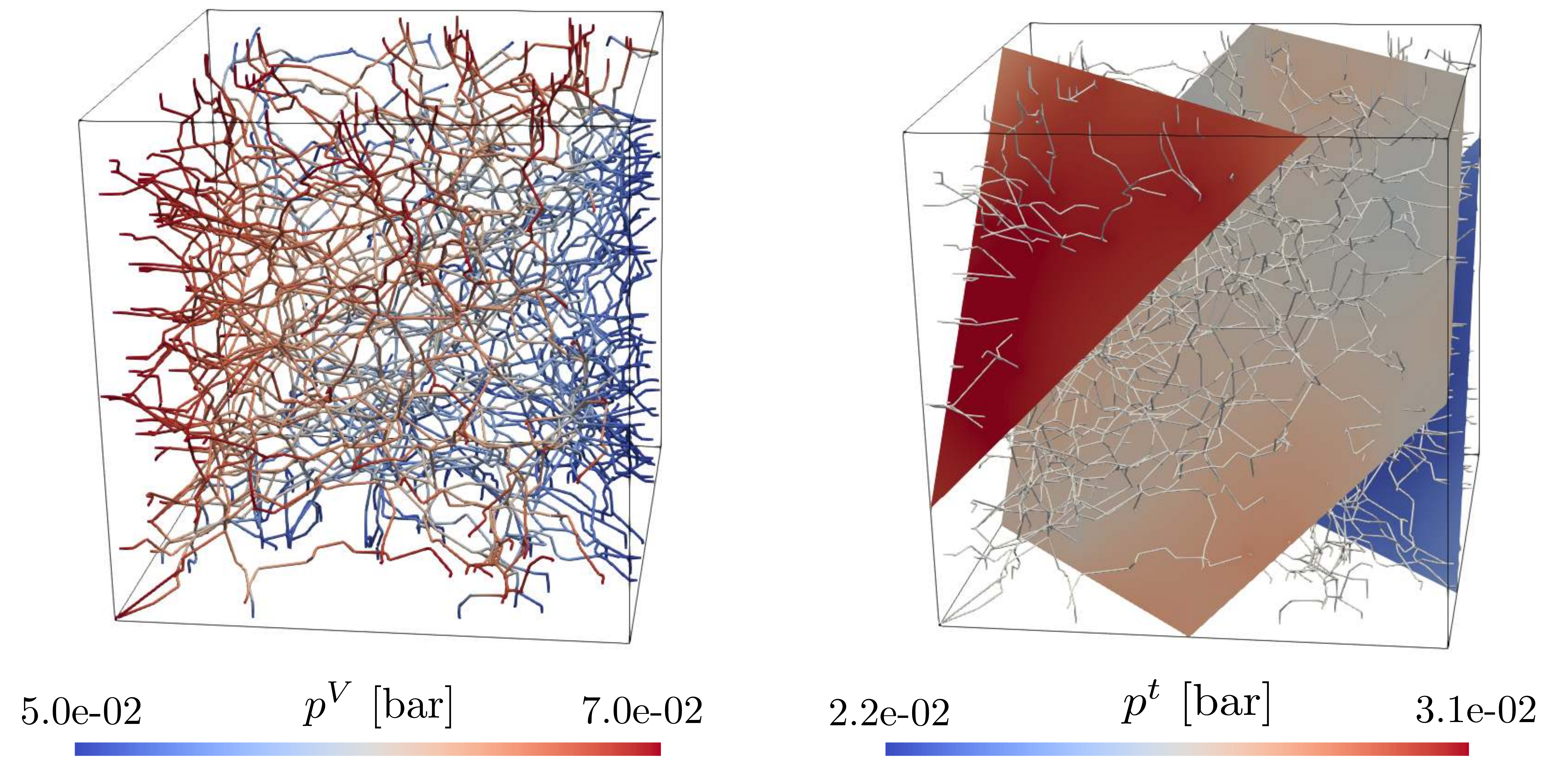}
\caption{Visualization of the pressure fields for the coupled 3D--1D perfusion problem. 
\textbf{Left:} Vascular pressure $p^V$. \textbf{Right:} Corresponding slices of the tissue pressure field, $p^t$, shown together with the embedded vasculature. Both pressure fields are expressed in bar.}
    \label{fig:show_solution}
\end{figure}
\begin{table}[h!]
    \centering
    \small
    \begin{tabular}{|l|l|l|}
        \hline
        \textbf{Parameter} & \textbf{Value} & \textbf{Description} \\ \hline
        $\epsilon$ & $1 \times 10^{-2}$ & Vessel radius (mm) \\ \hline
        $\mu_{\text{int}}$ & $1.3 \times 10^{-8}$ & Plasma viscosity (bar·s) \\ \hline
        $\mu_{\text{bl}}$ & $5 \times 10^{-9}$ & Blood viscosity (bar·s) \\ \hline
        $K_t$ & $1 \times 10^{-12}$ & Tissue permeability (mm\(^2\)) \\ \hline
        $K_V$ & $\epsilon^2/8 = 1.25 \times 10^{-5}$ & Vessel permeability (mm\(^2\)) \\ \hline
        $\rho_{\text{int}}$ & $1 \times 10^{-6}$ & Tissue fluid density (kg/mm\(^3\)) \\ \hline
        $\rho_{\text{bl}}$ & $1.03 \times 10^{-6}$ & Blood density (kg/mm\(^3\)) \\ \hline
        $L_{\text{cap}}$ & $1 \times 10^{-4}$ & Capillary wall conductivity (mm/(bar·s)) \\ \hline
        $\pi_p - \pi_{\text{int}}$ & $0.03334$ & Osmotic pressure difference (bar) \\ \hline
        $\beta$ & $0$ & Boundary exchange coefficient (kg/(bar·mm\(^2\)·s)) \\ \hline
        $p_{D,\text{in}}^V$ & $0.07$ & Inlet pressure (bar) \\ \hline
        $p_{D,\text{out}}^V$ & $0.05$ & Outlet pressure (bar) \\ \hline
    \end{tabular}
    \caption{Physical parameters for the 3D--1D oxygen perfusion model. }
    \label{tab:parameters}
\end{table}

\subsection{Computational setting and training data}
\label{sec:computational_setting}

The high-fidelity reference configuration is defined on
\(
\Omega=[0,3]^3\ {\rm mm}^3,
\)
partitioned into \(3\times3\times3\) congruent subdomains of side length \(1\,{\rm mm}\). Each subdomain is identified with the reference configuration
\(
\Omega_\star=[0,1]^3\ {\rm mm}^3.
\)
The local three-dimensional problem is discretized using continuous piecewise linear finite elements on a structured tetrahedral mesh obtained from a \(20\times20\times20\) Cartesian partition. This corresponds to a mesh size
\(
h=50\,\mu{\rm m}
\)
and approximately \(9.3\times10^3\) degrees of freedom per local problem.

The geometric variability is represented by a collection of \(75\) synthetic vascular networks
\(
\{\Lambda^{(g)}\}_{g=1}^{75},
\)
generated from the same stochastic construction based on three-dimensional Voronoi tessellations and shortest-path extraction. The dataset is divided into \(65\) training, \(5\) validation, and \(5\) test geometries. Each global network contains approximately \(2.0\times10^4\) one-dimensional degrees of freedom and \(2.5\times10^4\) edges on average. The test geometries are never used during training and are employed to assess generalization with respect to unseen embedded microstructures.

The training samples are generated according to the cascaded procedure of Section~\ref{sec:cascade_training}. Data are collected over graph realizations, subdomains, and selected DD iterations. For each surrogate, the resulting dataset contains
\(
65\times27\times5=8775
\)
local samples. The recorded quantities are
\[
\left(
\mathsf d_i,
\mathsf u_i^{(k)},
\mathsf u_{\Lambda\to h,i}^{(k-1)},
\mathsf f_{\mathrm{agg},i}^{(k-1)}
\right),
\qquad
k\in\{1,\ldots,5\}.
\]
The three training sets are not generated from the same algorithmic distribution. The transmission surrogate is trained on MS-DD-FOM trajectories, the local solution surrogate on trajectories with the trained transmission surrogate already active, and the coarse-residual surrogate on trajectories generated with both transmission and local-solution surrogates deployed.

To account for the non-stationary character of the DD trajectories, iteration-dependent surrogate instances are used. A single transmission model is trained on samples from all selected iterations. For \(S_{\rm rom}\), separate models are used for the first DD iteration and for subsequent iterations, while \(C_{\rm rom}\) uses three regimes corresponding to the first, second, and later iterations. During online execution, the surrogate associated with the current iteration regime is selected.

All surrogate models are implemented in \texttt{PyTorch}~\cite{paszke2019pytorch} and trained using the \texttt{L-BFGS} optimizer~\cite{liu1989limited} with learning rate equal to \(1.0\). The network architectures and the corresponding training parameters are summarized in Table~\ref{tab:rom_arch_summary}. Training was performed on a Lenovo Legion i9 laptop equipped with an Intel Core i9 processor and an NVIDIA GeForce RTX 4090 Laptop GPU with \(16\,\mathrm{GB}\) of GDDR6 memory. The largest individual training run required approximately \(600\,\mathrm{s}\).

The predictive performance of the three surrogate operators is reported in Table~\ref{tab:rom_performance}. For the transmission and local solution surrogates, \(T_{\rm rom}\) and \(S_{\rm rom}\), the error is measured as the relative discrete \(L^2(\Omega_\star)\)-error averaged over the corresponding dataset split. Since the output of \(C_{\rm rom}\) is a coarse residual vector rather than a finite element field, its accuracy is quantified by the relative Euclidean mean-square error.

\begin{table}[h!]
\centering
\small
\renewcommand{\arraystretch}{1.10}
\setlength{\tabcolsep}{5pt}
\begin{tabular}{lll}
\hline
\textbf{Surrogate} & \textbf{Quantity} & \textbf{Description} \\
\hline
\(T_{\rm rom}\)
& Architecture
& \( {\rm MINN}_{0.3}+{\rm MINN}_{0.2}+{\rm MINN}_{0.15}\) \\
& Trainable parameters
& \(1{,}389{,}143\) \\
& Training setup
& 200 epochs, L-BFGS optimizer, learning rate \(1.0\) \\
\hline
\(S_{\rm rom}\) (POD-MINN)
& Architecture
& \(3\) MINN encoders \(+\) dense decoder, \(100\) POD modes \\
& Trainable parameters
& \(3{,}255{,}806\)
\\
& Training setup
& 100 epochs, L-BFGS optimizer, learning rate \(1.0\) \\
\hline
\(S_{\rm rom}\) (closure)
& Architecture
& \( {\rm MINN}_{0.2}\) \\
& Trainable parameters
& \(1{,}908{,}576\) \\
& Training setup
& 25 epochs, L-BFGS optimizer, learning rate \(1.0\) \\
\hline
\(C_{\rm rom}\)
& Architecture
& \( {\rm MINN}_{0.3}+{\rm MINN}_{0.3}+{\rm Dense}_{256}+{\rm Dense}_{64}+{\rm Dense}_{8}\) \\
& Trainable parameters
& \(1{,}952{,}434\) \\
& Training setup
& 50 epochs, L-BFGS optimizer, learning rate \(1.0\) \\
\hline
\end{tabular}
\caption{Summary of the surrogate architectures and training configurations. Here,
\({\rm Local}_{r}\) denotes a mesh-informed local layer with interaction radius
\(r\), and \({\rm Dense}_{m}\) denotes a dense layer with output dimension \(m\).
The local solution surrogate \(S_{\rm rom}\) is implemented as a POD-MINN+
architecture with a closure correction.}
\label{tab:rom_arch_summary}
\end{table}

\begin{table}[h!]
\centering
\small
\renewcommand{\arraystretch}{1.2}
\setlength{\tabcolsep}{10pt}
\begin{tabular}{lccc}
\hline
\textbf{Model} & \textbf{Training [\%]} & \textbf{Validation [\%]} & \textbf{Test [\%]} \\
\hline
\(T_{\rm rom}\) (Transmission) &
1.03 & 1.02 & 1.04 \\
\(S_{\rm rom}\) (Local solution) &
4.55 & 5.06 & 5.58 \\
\(C_{\rm rom}\) (Coarse residual) &
7.33 & 8.82 & 9.47 \\
\hline
\end{tabular}
\caption{Training, validation, and test errors of the three surrogate maps. Errors for \(T_{\rm rom}\) and \(S_{\rm rom}\) are relative discrete \(L^2(\Omega_\star)\)-errors, while errors for \(C_{\rm rom}\) are relative Euclidean mean-square errors on the coarse residual vectors.}
\label{tab:rom_performance}
\end{table}

\subsection{Global accuracy and stability of the MS-DD-ROM iteration}
\label{sec:global_accuracy}

Having assessed the predictive accuracy of the individual surrogate operators, we now evaluate their coupled deployment within the complete MS-DD-ROM iteration. The purpose of this experiment is to determine whether the learned transmission, local-solution, and coarse-residual maps preserve the stability and interface consistency of the underlying two-level MS-DD-FOM on vascular geometries not used during training. The reported errors concern the three-dimensional component, which is the computationally dominant part of the coupled problem and the only component replaced by the MS-DD-ROM. The one-dimensional vascular problem remains an exact global solve within the staggered iteration. Its convergence was monitored in all experiments and remained consistent with that of the coupled three-dimensional iteration.

We consider ten unseen vascular networks and compare the MS-DD-ROM iterates with both the corresponding MS-DD-FOM iterates and a reference finite element solution \(u^\star\). The local linear systems of the MS-DD-FOM are solved by preconditioned conjugate gradients using PETSc/Hypre AMG, with relative tolerance \(10^{-15}\), so that the reported discrepancies are dominated by the surrogate approximation rather than by the local iterative solver.

The comparison is based on three complementary quantities. The global field accuracy is measured by the relative \(L^2\)-error
\[
E_{L^2}^{(k)}
:=
\frac{\|u_h^{(k)}-u^\star\|_{L^2(\Omega)}}
{\|u^\star\|_{L^2(\Omega)}}.
\]
Since the MS-DD-ROM reconstructs the global field from independently predicted local states, interface consistency is monitored through
\[
J_\Gamma^{(k)}
:=
\left(
\sum_{\Gamma_{ij}}
\left\|
u_{h,i}^{(k)}-u_{h,j}^{(k)}
\right\|_{L^2(\Gamma_{ij})}^2
\right)^{1/2}.
\]
To measure local gradient errors together with the loss of inter-subdomain continuity, we also introduce the relative broken energy-type error
\[
E_{\rm br}^{(k)}
:=
\frac{
\left(
\sum_{i\in\mathcal I}
\|u_{h,i}^{(k)}-u^\star\|_{H^1(\Omega_i)}^2
+
\sum_{\Gamma_{ij}}
h_{ij}^{-1}
\|u_{h,i}^{(k)}-u_{h,j}^{(k)}\|_{L^2(\Gamma_{ij})}^2
\right)^{1/2}
}{
\|u^\star\|_{H^1(\Omega)}
}.
\]
This broken norm is commonly used to combine elementwise energy errors and interface mismatches in nonconforming discretizations.

\begin{figure}[h!]
    \centering
    \includegraphics[width=0.99\linewidth]
    {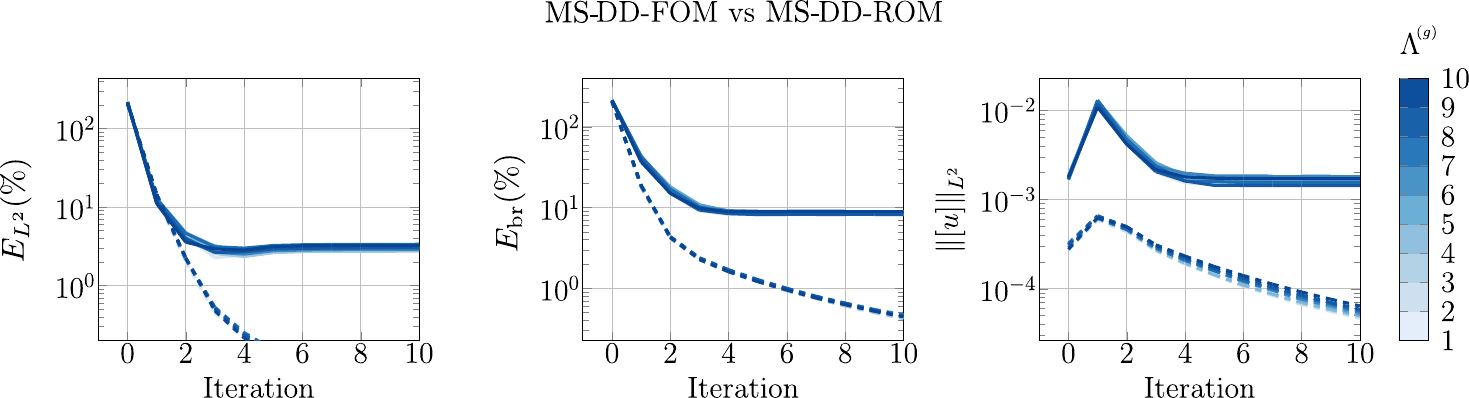}
\caption{Convergence curves of the MS-DD-FOM and MS-DD-ROM algorithms for the oxygen perfusion problem~\eqref{eq:OxModelVid}. Dashed lines denote the convergence histories obtained with the exact RR local solves for ten unseen one-dimensional geometries \(\Lambda^{(g)}\). Solid lines denote the corresponding convergence histories obtained by replacing the high-fidelity local operations with the surrogate RR maps. The reported errors include the relative \(L^2\)-error, the relative broken-energy error, and the interface jump norm.}
   \label{fig:DD_iteer_images}
\end{figure}

\begin{table}[h!]
\centering
\small
\renewcommand{\arraystretch}{1.2}
\setlength{\tabcolsep}{5.5pt}
\resizebox{\linewidth}{!}{%
\begin{tabular}{c|
cccc|cccc}
\hline
\multirow{2}{*}{$k$}
& \multicolumn{4}{c|}{\textbf{MS-DD-FOM}}
& \multicolumn{4}{c}{\textbf{MS-DD-ROM}} \\
\cline{2-9}
& $E_{L^2}$~[\%] & $E_{\mathrm{br}}$~[\%] & $\|[\![u]\!]\|_{L^2(\Gamma)}$ & $\Delta$~[\%]
& $E_{L^2}$~[\%] & $E_{\mathrm{br}}$~[\%] & $\|[\![u]\!]\|_{L^2(\Gamma)}$ & $\Delta$~[\%] \\
\hline
0
& $2.09\mathrm{E}{+2}$ & $2.07\mathrm{E}{+2}$ & $3.03\mathrm{E}{-4}$ & $1.14\mathrm{E}{+0}$
& $2.16\mathrm{E}{+2}$ & $2.14\mathrm{E}{+2}$ & $1.78\mathrm{E}{-3}$ & $3.81\mathrm{E}{+1}$ \\
2
& $2.05\mathrm{E}{+0}$ & $4.12\mathrm{E}{+0}$ & $4.61\mathrm{E}{-4}$ & $8.00\mathrm{E}{-1}$
& $4.38\mathrm{E}{+0}$ & $1.63\mathrm{E}{+1}$ & $4.54\mathrm{E}{-3}$ & $1.17\mathrm{E}{+1}$ \\
4
& $2.39\mathrm{E}{-1}$ & $1.63\mathrm{E}{+0}$ & $2.08\mathrm{E}{-4}$ & $1.59\mathrm{E}{-1}$
& $2.72\mathrm{E}{+0}$ & $8.85\mathrm{E}{+0}$ & $1.76\mathrm{E}{-3}$ & $1.92\mathrm{E}{+0}$ \\
6
& $1.01\mathrm{E}{-1}$ & $9.63\mathrm{E}{-1}$ & $1.23\mathrm{E}{-4}$ & $8.34\mathrm{E}{-2}$
& $3.08\mathrm{E}{+0}$ & $8.84\mathrm{E}{+0}$ & $1.68\mathrm{E}{-3}$ & $1.24\mathrm{E}{+0}$ \\
8
& $5.15\mathrm{E}{-2}$ & $6.30\mathrm{E}{-1}$ & $7.96\mathrm{E}{-5}$ & $5.07\mathrm{E}{-2}$
& $3.13\mathrm{E}{+0}$ & $8.85\mathrm{E}{+0}$ & $1.68\mathrm{E}{-3}$ & $1.22\mathrm{E}{+0}$ \\
\hline
\end{tabular}
} % end resizebox
\caption{\small Convergence history comparing the three-dimensional MS-DD-FOM and MS-DD-ROM solutions for the representative global one-dimensional graph \(\Lambda_1\). At each iteration \(k\), the relative \(L^2\)-error \(E_{L^2}\), the broken-energy error \(E_{\mathrm{br}}\), and the interface jump norm \(\|[\![u]\!]\|_{L^2(\Gamma)}\) are reported for the three-dimensional solution. The corresponding relative increments \(\Delta\) are defined between two consecutive iterations as \( \Delta = {\left\|u^{(k)}-u^{(k-1)}\right\|}/{\|u^{(k-1)}}\| \) and are reported in percentage.} \label{tab:convergence_1D_3D_surrogate}
\end{table}

Figure~\ref{fig:DD_iteer_images} reports the convergence histories over the ten unseen vascular geometries, while Table~\ref{tab:convergence_1D_3D_surrogate} provides detailed values for one representative configuration.

The exact MS-DD-FOM converges toward the conforming reference solution, whereas the MS-DD-ROM enters a stable error plateau after approximately four iterations. Importantly, the surrogate iteration does not exhibit error growth or loss of interface stability. This behavior is consistent with Proposition~\ref{prop:perturbed_dd}: the learned local operators perturb the exact contraction, so that the iterates approach a neighborhood of the fixed point rather than the fixed point itself. The observed plateau therefore provides numerical evidence of the error-floor mechanism predicted by the abstract analysis.
For the representative test geometry, the plateau corresponds to a relative \(L^2\)-error of approximately \(3.1\%\), a broken energy-type error of approximately \(8.8\%\), and an interface jump of order \(10^{-3}\).

The spatial comparison in Figure~\ref{fig:solution_comparison} shows that the MS-DD-ROM reproduces the global pressure pattern and the localized variations induced by the embedded vascular network. The largest discrepancies are concentrated near the artificial subdomain interfaces, where pointwise relative errors reach approximately \(5\%\). This localized behavior aligns with the structure of \(S_{\rm rom}\): Robin data are supplied as input fields, but interface continuity and flux balance are not enforced exactly in its output. The result therefore identifies interface-aware constraints as a natural direction for improving the surrogate while preserving the present non-intrusive formulation.

\begin{figure}[h!]
    \centering
    \includegraphics[width=0.95\linewidth]{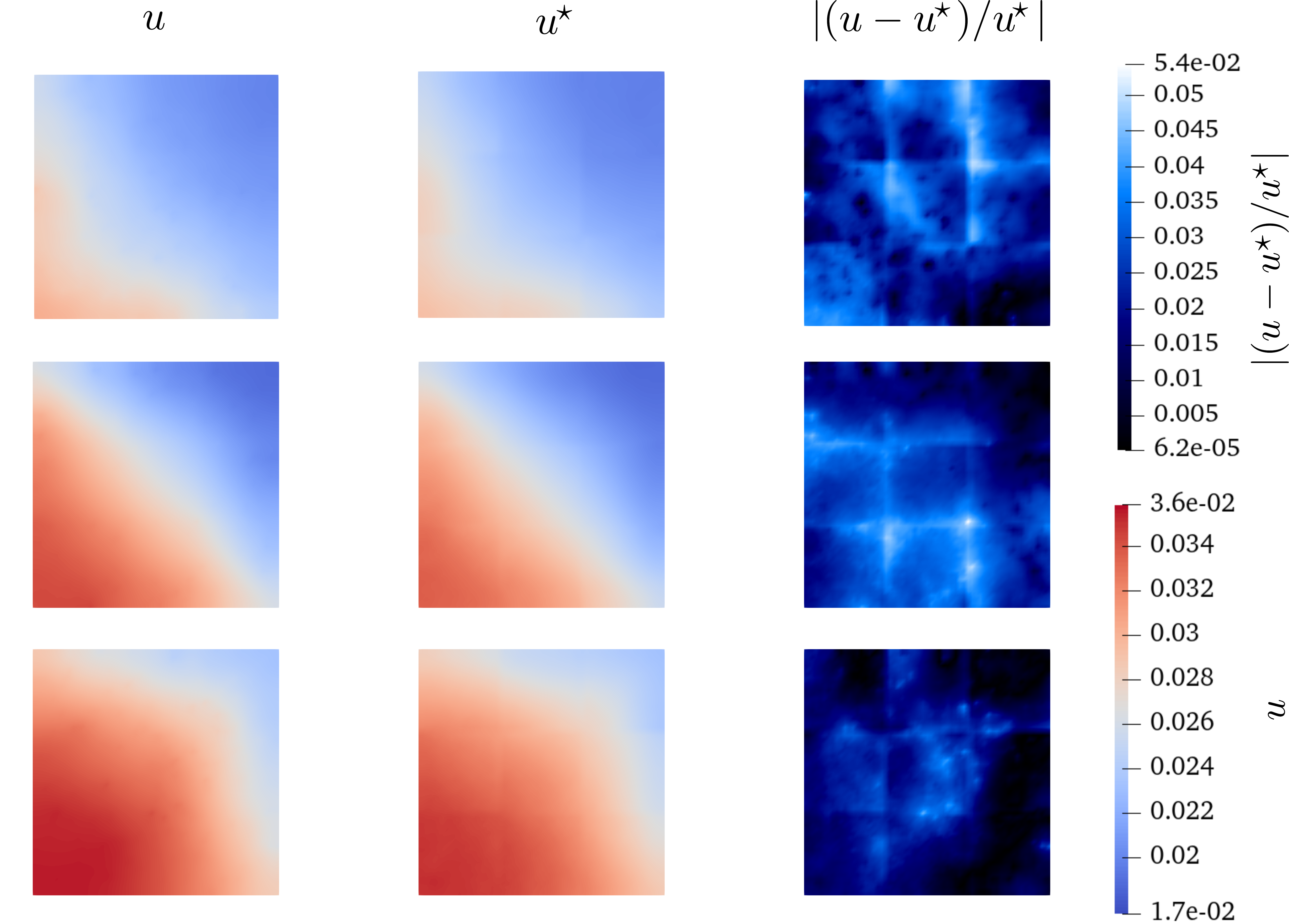}
\caption{Tissue pressure solution \(u \equiv p^t \) on the domain \(\Omega=[0\,\mathrm{mm},3\,\mathrm{mm}]^3\). From top to bottom, the rows correspond to slices taken in the upper, central, and lower regions of the domain along the \(z\)-direction. \textbf{Left column:} reference solution \(u^\star\). \textbf{Center column:} MS-DD-ROM solution at stagnation. \textbf{Right column:} pointwise relative error \(\left|(u-u^\star)/u^\star\right|\) between the MS-DD-ROM and reference solutions.}
    \label{fig:solution_comparison}
\end{figure}

\subsubsection{Weak scalability of the MS-DD-ROM algorithm}

We next investigate whether the MS-DD-ROM preserves the weak-scaling behavior of the underlying two-level domain decomposition method. The key question is whether surrogates trained on a fixed reference subdomain can be reused without retraining when the global domain is enlarged by increasing the number of subdomains.
To this end, the physical size and fine-scale discretization of each local subdomain are kept fixed, while the global domain is enlarged. Each subdomain has volume
\(
|\Omega_i|=1\,\mathrm{mm}^3
\)
and is discretized exactly as in the training configuration. For
\(
n\in\{3,4,5\}
\),
we consider Cartesian decompositions with
\(
N_{\rm sub}=n^3\in\{27,64,125\},
\)
corresponding to approximately
\(
2.5\times10^5,\
5.9\times10^5,\
1.16\times10^6
\)
three-dimensional degrees of freedom, respectively.
The local surrogate architectures, trained parameters, reference mesh, and coarse-space construction are unchanged across the three configurations. Only the number of instantiated subdomains and the size of the global vascular network increase. The embedded vascular networks are sampled from the same generative model used for the training set, but the resulting global graphs and decompositions are not present in the offline data

\begin{figure}[t]
    \centering
    \includegraphics[width=0.95\linewidth]{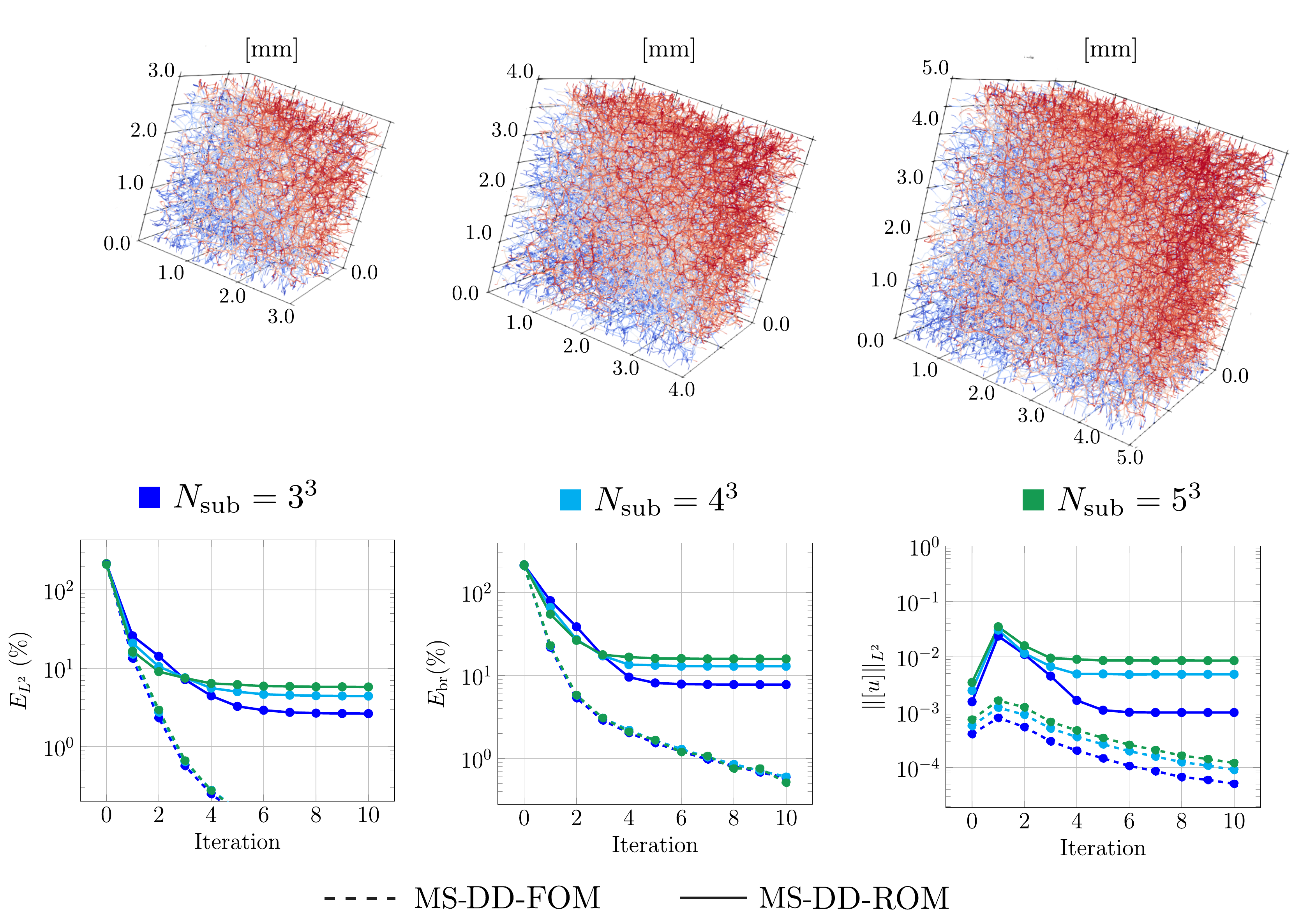}
 \caption{Weak-scaling study for the MS-DD-ROM algorithm applied to the oxygen perfusion problem~\eqref{eq:OxModelVid}. From left to right, both the top and bottom rows refer to the \(3^3\), \(4^3\), and \(5^3\) subdomain configurations, respectively. \textbf{Top row:} scaled representations, in millimeters, of the embedded one-dimensional geometries inside the three-dimensional domains, corresponding to global volumes of \(27\,\mathrm{mm}^3\), \(64\,\mathrm{mm}^3\), and \(125\,\mathrm{mm}^3\). \textbf{Bottom row:} convergence curves of the MS-DD-FOM and MS-DD-ROM algorithms. Dashed lines denote the convergence histories obtained with the exact RR local solves for ten different one-dimensional geometries \(\Lambda_i\), while solid lines denote the corresponding histories obtained by replacing the high-fidelity local operations with the surrogate RR maps. The reported quantities are the relative \(L^2\)-error, the relative broken-energy error, and the interface jump norm.} \label{fig:ddromscaling}
\end{figure}

Here, weak scalability is understood in an algorithmic sense: the local computational problem remains fixed and we assess whether the convergence behavior and global accuracy remain controlled as the number of subdomains increases. Parallel wall-clock scalability is not considered in this experiment.

Figure~\ref{fig:ddromscaling} reports the convergence histories of the MS-DD-FOM and MS-DD-ROM for the three decomposition sizes, while Table~\ref{tab:convergence_3d_dofs} provides representative quantitative values at selected iterations. The nearly overlapping MS-DD-FOM curves confirm that the two-level Robin--Robin method preserves its contraction behavior as the global domain grows. This demonstrates the effectiveness of the coarse correction in transmitting global information across increasingly large decompositions.

The MS-DD-ROM exhibits the same qualitative stability mechanism. In all three configurations, the surrogate iteration decreases rapidly during the initial iterations and subsequently reaches a bounded error plateau, without evidence of instability or deterioration of the convergence rate. As expected, we see that the two-level DD mechanism controls the iterative propagation of the error, while the surrogate approximation determines the asymptotic neighborhood of the exact fixed point.

The final accuracy nevertheless deteriorates moderately with the number of subdomains. The relative \(L^2\)-error increases from approximately \(2.7\%\) for \(27\) subdomains to \(5.8\%\) for \(125\) subdomains, while the broken energy-type error increases from approximately \(7.8\%\) to \(15.8\%\). This degradation may result both from the accumulation of local surrogate errors over a larger decomposition and from localized DD states that are progressively less represented in the offline training distribution.

Overall, the experiment demonstrates algorithmic weak scalability of the MS-DD-ROM over the tested range: the same reference surrogates can be deployed on global problems exceeding one million degrees of freedom without loss of iterative stability. The growth of the error plateau shows, however, that scalability in accuracy ultimately depends on the coverage of the localized parameter space. This observation suggests adaptive enrichment strategies in which additional high-fidelity local samples are generated only for the subdomains and DD states responsible for the largest residual or interface errors.

\begin{table}[h!]
\centering
\small
\renewcommand{\arraystretch}{1.05}
\setlength{\tabcolsep}{6pt}
\resizebox{0.75\linewidth}{!}{%
\begin{tabular}{c|c|cccc}
\hline\small
\textbf{Problem size} & \(\boldsymbol{k}\)
& \(E_{L^2}\)~[\%]
& \(E_{\mathrm{br}}\)~[\%]
& \(\|[u]\|_{L^2(\Gamma)}\)
& \(\Delta\)~[\%] \\
\hline

\multirow{5}{*}{\begin{tabular}{c}$N_\mathrm{sub}=27$; \\ \(2.50\mathrm{E}{+5}\) DoFs\end{tabular}}
& 0 & \(2.17\mathrm{E}{+2}\) & \(2.14\mathrm{E}{+2}\) & \(1.54\mathrm{E}{-3}\) & \(2.54\mathrm{E}{+1}\) \\
& 2 & \(1.42\mathrm{E}{+1}\) & \(3.84\mathrm{E}{+1}\) & \(1.10\mathrm{E}{-2}\) & \(2.17\mathrm{E}{+1}\) \\
& 4 & \(4.44\mathrm{E}{+0}\) & \(9.52\mathrm{E}{+0}\) & \(1.63\mathrm{E}{-3}\) & \(4.12\mathrm{E}{+0}\) \\
& 6 & \(2.91\mathrm{E}{+0}\) & \(7.85\mathrm{E}{+0}\) & \(9.95\mathrm{E}{-4}\) & \(9.75\mathrm{E}{-1}\) \\
& 8 & \(2.67\mathrm{E}{+0}\) & \(7.76\mathrm{E}{+0}\) & \(9.87\mathrm{E}{-4}\) & \(6.00\mathrm{E}{-1}\) \\
\hline

\multirow{5}{*}{\begin{tabular}{c}$N_\mathrm{sub}=64$; \\ \(5.93\mathrm{E}{+5}\) DoFs\end{tabular}}
& 0 & \(2.16\mathrm{E}{+2}\) & \(2.13\mathrm{E}{+2}\) & \(2.46\mathrm{E}{-3}\) & \(2.45\mathrm{E}{+1}\) \\
& 2 & \(1.05\mathrm{E}{+1}\) & \(2.70\mathrm{E}{+1}\) & \(1.16\mathrm{E}{-2}\) & \(1.37\mathrm{E}{+1}\) \\
& 4 & \(5.57\mathrm{E}{+0}\) & \(1.35\mathrm{E}{+1}\) & \(4.88\mathrm{E}{-3}\) & \(4.31\mathrm{E}{+0}\) \\
& 6 & \(4.65\mathrm{E}{+0}\) & \(1.29\mathrm{E}{+1}\) & \(4.79\mathrm{E}{-3}\) & \(3.16\mathrm{E}{+0}\) \\
& 8 & \(4.46\mathrm{E}{+0}\) & \(1.29\mathrm{E}{+1}\) & \(4.83\mathrm{E}{-3}\) & \(3.01\mathrm{E}{+0}\) \\
\hline

\multirow{5}{*}{\begin{tabular}{c}$N_\mathrm{sub}=125$; \\ \(1.16\mathrm{E}{+6}\) DoFs\end{tabular}}
& 0 & \(2.16\mathrm{E}{+2}\) & \(2.14\mathrm{E}{+2}\) & \(3.45\mathrm{E}{-3}\) & \(2.14\mathrm{E}{+1}\) \\
& 2 & \(9.06\mathrm{E}{+0}\) & \(2.64\mathrm{E}{+1}\) & \(1.58\mathrm{E}{-2}\) & \(1.10\mathrm{E}{+1}\) \\
& 4 & \(6.40\mathrm{E}{+0}\) & \(1.66\mathrm{E}{+1}\) & \(9.02\mathrm{E}{-3}\) & \(4.69\mathrm{E}{+0}\) \\
& 6 & \(5.91\mathrm{E}{+0}\) & \(1.59\mathrm{E}{+1}\) & \(8.61\mathrm{E}{-3}\) & \(4.50\mathrm{E}{+0}\) \\
& 8 & \(5.79\mathrm{E}{+0}\) & \(1.58\mathrm{E}{+1}\) & \(8.55\mathrm{E}{-3}\) & \(4.46\mathrm{E}{+0}\) \\
\hline
\end{tabular}%
}
\caption{\small Convergence history for the three-dimensional MS-DD-ROM solutions with \(3^3\), \(4^3\), and \(5^3\) subdomain configurations. At each iteration \(k\), the relative \(L^2\)-error \(E_{L^2}\), the broken-energy error \(E_{\mathrm{br}}\), and the interface jump norm \(\|[\![u]\!]\|_{L^2(\Gamma)}\) are reported for the three-dimensional solution. The corresponding relative increments \(\Delta\) are defined between two consecutive iterations as \( \Delta = {\left\|u^{(k)}-u^{(k-1)}\right\|}/{\left\|u^{(k-1)}\right\|} \) and are reported in percentage.}
\label{tab:convergence_3d_dofs}
\end{table}

\subsubsection{Online computational performance}
\label{sec:online_performance}

We finally assess the online computational cost of the MS-DD-ROM relative to MS-DD-FOM. Since the total execution time of a domain decomposition solver depends strongly on the parallel implementation and hardware architecture, we focus here on the principal operations performed on each subdomain. This comparison isolates the computational savings produced by replacing the fine-scale local operators with neural surrogates.

The measured operations comprise two parameter-dependent setup costs and three iteration-dependent operations. The former consist of \textit{(i)} local matrix assembly and \textit{(ii)} preconditioner construction; the latter are \textit{(iii)} evaluation of the local coarse-residual contribution, \textit{(iv)} computation of the Robin transmission datum, and \textit{(v)} solution of the local subproblem. Their execution frequencies and average costs are reported in Table~\ref{tab:execution_frequency}, while Figure~\ref{fig:DD_time_convergence} provides a logarithmic comparison.

The main computational advantage of the MS-DD-ROM is the elimination of the parameter-dependent fine-scale setup. In the MS-DD-FOM, assembling the local operator requires approximately \(646\,\mathrm{ms}\) per subdomain and dominates the measured cost, while preconditioner setup requires an additional \(49\,\mathrm{ms}\). Neither operation is required online by the MS-DD-ROM, because the local matrices and preconditioners are used only during offline data generation.

The elimination of assembly is particularly important in many-query settings involving changes in the embedded vascular geometry. In the MS-DD-FOM, each new parametrization generally requires reconstructing the local coupling operators and updating the associated solver setup. Once trained, the MS-DD-ROM replaces these repeated setup costs by evaluations of the same reusable reference surrogates.
\begin{figure}[h!]
    \centering
    \includegraphics[width=0.65\linewidth]{ 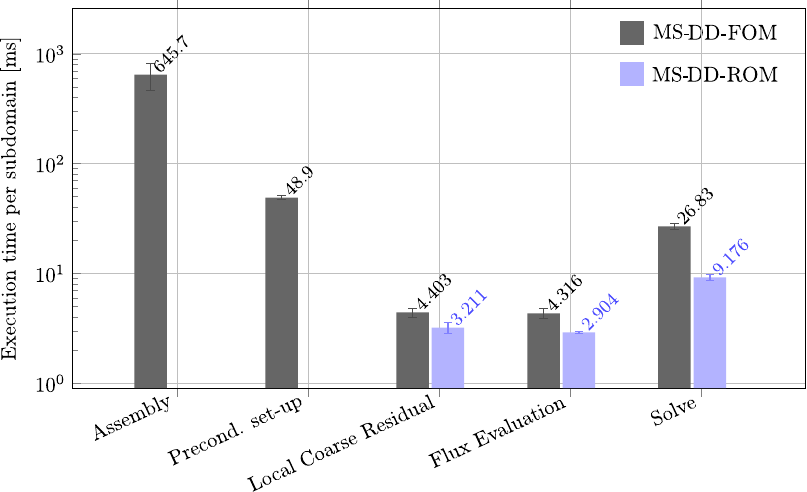}
\caption{Average execution time per subdomain, reported in logarithmic scale, for the main computational operations: matrix assembly, preconditioner setup, flux evaluation, and subdomain solve. Dark bars correspond to the MS-DD-FOM algorithm, while light blue bars indicate the surrogate MS-DD-ROM algorithm. }
\label{fig:DD_time_convergence}
\end{figure}

\begin{table}[h!]
\centering
\small
\renewcommand{\arraystretch}{1.2}
\setlength{\tabcolsep}{8pt}
\resizebox{\linewidth}{!}{%
\begin{tabular}{l|cc|cc}
\hline
\multirow{2}{*}{\textbf{Operation}} 
& \multicolumn{2}{c|}{\textbf{MS-DD-FOM}} 
& \multicolumn{2}{c}{\textbf{MS-DD-ROM}} \\
\cline{2-5}
& \textbf{Frequency} & \textbf{Time  [ms]} 
& \textbf{Frequency} & \textbf{Time [ms]} \\
\hline
Assembly              & once per param. change & 645.7 \(\pm (190, \;180)\)  & not required & - \\
Prec. set-up & once per param. change & 48.90 \(\pm (2.60, \; 2.00)\) & not required & - \\
Coarse Res.       & once per iter. & 4.403 \(\pm (0.90, \; 0.40)\) & once per iter. & 3.211 \(\pm (0.57, \; 0.40)\) \\
Robin Flux       & once per iter. & 4.316 \(\pm (0.52, \; 0.46)\) & once per iter. & 2.904 \(\pm (0.50, \; 0.06)\) \\
Solve                 & once per iter. & 26.83 \(\pm (2.21, \; 1.77)\) & once per iter. &  9.176 \(\pm (6.60, \; 0.61)\)\\
\hline
\end{tabular}
}% end resizebox
\caption{\small Execution time and frequency of the main operations in the MS-DD-FOM and MS-DD-ROM algorithms. Here, ``param.'' denotes the parametric configuration, ``iter.'' denotes one MS-DD-ROM iteration, ``Prec. set-up'' denotes preconditioner set-up, and ``Coarse Res.'' denotes coarse-residual evaluation. Times are reported in milliseconds.} \label{tab:execution_frequency}
\end{table}

Among the iteration-dependent operations, neural inference reduces the local solution time from approximately \(26.8\,\mathrm{ms}\) for the AMG-preconditioned conjugate-gradient solve to \(9.2\,\mathrm{ms}\), corresponding to a speedup of about \(2.9\). The surrogate transmission and coarse-residual evaluations are also faster than their exact counterparts, although the reductions are more modest because these operations are already inexpensive at the high-fidelity level.

Summing the recurring operations yields an average cost of approximately
\(35.5\,\mathrm{ms}\) per subdomain and iteration for the MS-DD-FOM and
\(15.3\,\mathrm{ms}\) for the MS-DD-ROM, corresponding to a local online
speedup of about \(2.3\). This comparison excludes the additional
parameter-dependent assembly and preconditioner costs avoided by the
surrogate method. These savings are obtained at the price of the surrogate error quantified previously.

The present comparison is intentionally performed at the subdomain level and does not constitute an end-to-end parallel benchmark. The MS-DD-FOM and MS-DD-ROM also favor different execution models: conventional local assembly and iterative solves are naturally distributed across CPU processes, whereas the neural maps can evaluate batches of subdomains concurrently on a GPU \cite{paszke2019pytorch,raina2009large}. The reported timings do not yet exploit this batched inference strategy. Consequently, a fair architecture-level comparison will require parallel implementations of both algorithms and is left for future work. In addition, the timings reported here concern the online stage and exclude data generation and network training. The value of the surrogate therefore depends on amortizing the offline cost over a sufficiently large number of parameter queries. A complete break-even analysis would additionally require accounting for high-fidelity dataset generation and for the parallel implementation of both solvers.

\section{Conclusion and Perspectives}

In this work, we introduced a computational framework that combines domain decomposition (DD) techniques with non-intrusive non-linear model order reduction for the development of scalable and computationally efficient solvers for multiscale and mixed-dimensional problems. 
We exploited localization and the definition of a reference local subproblem, defined by a \emph{local representability} assumption, to confine at the local level the computationally intensive offline part of the solver and to accelerate the execution of the iterative method used to reconstruct the global solution. In addition, the orchestration of the three surrogate models introduced to achieve full non-intrusiveness, is implemented through a \emph{cascade-training} procedure designed to generate reduced models compatible with iterative DD algorithms. 
Our analysis showed that reduced-order surrogates can be embedded within iterative DD schemes without altering the underlying convergence and stability mechanism, provided that appropriate uniform accuracy and stability properties are maintained throughout the iterative process. The numerical results for a 3D--1D blood perfusion problem illustrated the effectiveness of the proposed methodology in terms of accuracy, computational efficiency, and weak scalability.

Several questions remain open and motivate further investigation. On the theoretical side, a detailed characterization of the error introduced by the surrogate and propagated through the DD iteration is still required. This analysis should clarify the impact of the reduced approximation on transmission conditions, coarse-space corrections, and local solutions. It should also support the design of corrective mechanisms based, for instance, on residual-based error indicators or online adaptive enrichment strategies.
A further development in progress is the extension of this work beyond tensor product domains and uniform subdomain decompositions. Addressing geometrically heterogeneous subdomains would require surrogate models capable of explicit geometric generalization. This direction will follow recent advances in geometry-aware reduced-order modeling and learning-based approximation methods.

%In summary, the present study develops a principled computational approach to combining domain decomposition and reduced-order modeling in a realistic application setting. The results illustrate the feasibility of hybrid learning-augmented DD solvers and highlight key analytical and algorithmic challenges toward improving their robustness, generality, and scalability.

%Additional improvements may be obtained by incorporating structure-preserving and constraint-aware reduced-order architectures. These models could enhance robustness by enforcing conservation laws, compatibility conditions, and interfacial flux balance at the reduced level.

%From an algorithmic standpoint, the DD Robin-Robin formulation provides a natural starting point for extensions to more advanced DD strategies, including FETI and BDDC methods. Such developments would facilitate the application of the proposed framework to large-scale parallel architectures and multilevel computational settings.

\section*{Acknowledgments}
 PV and PZ acknowledge support from the MUR PRIN 2022 project 2022WKWZA8 \textit{Immersed methods for multiscale and multiphysics problems} (IMMEDIATE), part of the Next Generation EU program Mission 4, Component 2, CUP D53D23006010006; the European Union’s Euratom research and training programme 2021–2027 under grant agreement No. 101166699 \textit{Radiation safety through biological extended models and digital twins} (TETRIS);
 PZ acknowledges support from the \textit{Mathematical Challenges in Brain Mechanics} project of the Center for Advanced Study (CAS), The Norwegian Academy of Science and Letters, Oslo, Norway; ND and PZ acknowledges support from the EP PerMed JTC2024 project \textit{Targeting hypoxia with heavy ions to gain control of radioresistant cancers} (Hi-ROC), funded for Politecnico di Milano by the Ministero dell'Università e della Ricerca (MUR).
All authors acknowledge support from \textit{Dipartimento di Eccellenza} 2023--2027, Department of Mathematics, Politecnico di Milano. All authors are members of the Gruppo Nazionale per il Calcolo Scientifico (GNCS) of the Istituto Nazionale di Alta Matematica (INdAM).

%% If you have bib database file and want bibtex to generate the
%% bibitems, please use
%%
\bibliographystyle{elsarticle-num-names} 
\bibliography{mybiblio,mybiblio0,mybiblio1,mybiblio2,mybiblio3}

\section{Appendix}

\setcounter{prop}{0}
\begin{prop}[Stability under surrogate perturbations]
\label{prop:appendix_perturbed_contraction}
Let $(\mathcal V,\|\cdot\|_{\star})$ be a normed space and let
$\mathcal{DD}:\mathcal V\to\mathcal V$ be a contraction with constant
$q\in[0,1)$, that is,
\[
\left\|
 \mathcal{DD}(v)- \mathcal{DD}(w)
\right\|_{\star}
\leq
q\|v-w\|_{\star}
\qquad
\forall v,w\in\mathcal V .
\]
Assume that $u \in\mathcal V$ is the unique fixed point of
$\mathcal{DD}$, namely
\[
u =\mathcal{DD}(u).
\]
Let $(\widetilde u^{(k)})_{k\geq0}$ be the surrogate-perturbed DD iteration
defined by
\[
\widetilde u^{(k+1)}
=
 \mathcal{DD}(\widetilde u^{(k)})
+
\delta^{(k)},
\qquad k\geq0,
\]
where the perturbations satisfy
\[
\|\delta^{(k)}\|_{\star}
\leq
C_{\mathrm{DD}} \, \varepsilon
\qquad
\forall k\geq0 .
\]
Then, for every $k\geq0$,
\[
\left\|
u-\widetilde u^{(k)}
\right\|_{\star}
\leq
q^k
\left\|
u-\widetilde u^{(0)}
\right\|_{\star}
+
\frac{1-q^k}{1-q}C_{\mathrm{DD}} \, \varepsilon.
\]
In particular,
\[
\limsup_{k\to\infty}
\left\|
u-\widetilde u^{(k)}
\right\|_{\star}
\leq
\frac{C_{\mathrm{DD}}}{1-q} \, \varepsilon .
\]
\end{prop}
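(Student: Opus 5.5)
The plan is to derive a one-step error recursion from the fixed-point identity and the contraction property, and then unroll it by induction. I set $e^{(k)} := u-\widetilde u^{(k)}$. Since $u=\mathcal{DD}(u)$, the perturbed update gives
\[
e^{(k+1)} = \mathcal{DD}(u)-\mathcal{DD}(\widetilde u^{(k)}) - \delta^{(k)}.
\]
The triangle inequality, the contraction hypothesis and the uniform bound on $\delta^{(k)}$ then yield the scalar recursion
\[
\|e^{(k+1)}\|_{\star}\leq q\|e^{(k)}\|_{\star} + C_{\mathrm{DD}}\,\varepsilon .
\]

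Next I prove the claimed bound $\|e^{(k)}\|_\star\le q^k\|e^{(0)}\|_\star+\frac{1-q^k}{1-q}C_{\mathrm{DD}}\varepsilon$ by induction on $k$. At $k=0$ it holds trivially, since the second term vanishes. For the inductive step, I insert the hypothesis into the recursion. This gives $q^{k+1}\|e^{(0)}\|_\star$ plus $\bigl(q\frac{1-q^k}{1-q}+1\bigr)C_{\mathrm{DD}}\varepsilon$, and the bracket equals $\frac{1-q^{k+1}}{1-q}$, i.e. the geometric sum $\sum_{j=0}^{k}q^j$.

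For the limsup statement, I use $q\in[0,1)$. Then $q^k\|e^{(0)}\|_\star\to0$ and $\frac{1-q^k}{1-q}\le\frac{1}{1-q}$, which gives the asymptotic bound directly.

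To connect this with Proposition~\ref{prop:perturbed_dd}, I still need the uniform bound $\|\delta^{(k)}\|_\star\le C_{\mathrm{DD}}\varepsilon$. It follows from Assumption~\ref{asm:cdd} combined with the uniform approximation property:
\[
\max_i\|\mathfrak e_\star(\widehat\mu_i^{(k)})\|\le\sup_{\widehat\mu\in\widehat{\mathcal P}}\|\mathfrak e_\star(\widehat\mu)\|\le\varepsilon .
\]
This step requires the localized parameters $\widehat\mu_i^{(k)}$ generated along the perturbed trajectory to lie in $\widehat{\mathcal P}$, which is exactly what local representability supplies.

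No step is a genuine obstacle. The main point of care is this last one: the supremum bound must cover the states actually visited by the perturbed iteration and not only those of the exact one.
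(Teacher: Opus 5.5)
Your proof is correct and follows the paper's argument: you derive the same one-step recursion $\|e^{(k+1)}\|_{\star}\leq q\|e^{(k)}\|_{\star}+C_{\mathrm{DD}}\varepsilon$ from the fixed-point identity, the contraction property and the triangle inequality, and your explicit induction simply formalizes the paper's ``iterating this recurrence'' into the geometric sum. Your closing paragraph linking the bound on $\delta^{(k)}$ to the assumptions of Section~2 is not needed here, since this statement takes $\|\delta^{(k)}\|_{\star}\leq C_{\mathrm{DD}}\varepsilon$ as a hypothesis.
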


\bigskip

\begin{proof}
Define
\[
e^{(k)}
:=
u-\widetilde u^{(k)}.
\]
Since $u$ is the fixed point of $ \mathcal{DD}$, we have
\[
u= \mathcal{DD}(u).
\]
Using the perturbed iteration, we obtain
\[
\begin{aligned}
e^{(k+1)}
&=
u-\widetilde u^{(k+1)}
\\
&=
 \mathcal{DD}(u)
-
 \mathcal{DD}(\widetilde u^{(k)})
-
\delta^{(k)} .
\end{aligned}
\]
Hence, by the triangle inequality and the contraction property,
\[
\begin{aligned}
\|e^{(k+1)}\|_{\star}
&\leq
\left\|
 \mathcal{DD}(u)
-
 \mathcal{DD}(\widetilde u^{(k)})
\right\|_{\star}
+
\|\delta^{(k)}\|_{\star}
\\
&\leq
q\|u-\widetilde u^{(k)}\|_{\star}
+
C_{\mathrm{DD}} \, \varepsilon
\\
&=
q\|e^{(k)}\|_{\star}
+
C_{\mathrm{DD}} \, \varepsilon .
\end{aligned}
\]
Iterating this recurrence gives
\[
\|e^{(k)}\|_{\star}
\leq
q^k\|e^{(0)}\|_{\star}
+
C_{\mathrm{DD}} \, \varepsilon
\sum_{\ell=0}^{k-1}q^\ell .
\]
Since
\[
\sum_{\ell=0}^{k-1}q^\ell
=
\frac{1-q^k}{1-q},
\]
we obtain
\[
\|e^{(k)}\|_{\star}
\leq
q^k\|e^{(0)}\|_{\star}
+
\frac{1-q^k}{1-q}C_{\mathrm{DD}} \, \varepsilon.
\]
Taking the limit superior as $k\to\infty$ and using $q^k\to0$ yields
\[
\limsup_{k\to\infty}
\|e^{(k)}\|_{\star}
\leq
\frac{C_{\mathrm{DD}}}{1-q} \, \varepsilon .
\]
This concludes the proof.
\end{proof}

\bigskip

\vspace{50pt} \begin{prop}[Contraction of the discrete Robin--Robin iteration]
\label{prop:exactRR-OSMconvergence}
Let $u_h$ be the global finite element solution of the discrete problem, and let
$u_{h,l}^{(k)}$ be the exact Robin--Robin iterate on $\Omega_l$, with
$l\in\{i,j\}$, at iteration $k\geq 1$.

Assume that, for each $l\in\{i,j\}$, the bilinear form
$a_l(\cdot,\cdot)$ is symmetric, continuous, and coercive on
$\mathcal V_{h,l}$. Moreover $a_l(\cdot,\cdot)$ induces the local energy norm
\[
\|v_l\|_{a,\Omega_l}^2 := a_l(v_l,v_l),
\qquad v_l\in \mathcal V_{h,l},
\]
which is uniformly equivalent to the $H^1(\Omega_l)$-norm on
$\mathcal V_{h,l}$.

Assume moreover that the discrete lifting operators
\(
\mathcal L_l := \mathcal E_l\circ \operatorname{Tr}_{\Gamma_{ij}},
\; l\in\{i,j\},
\)
are uniformly stable with respect to the interface trace. More precisely,
there exists constants \(\ell_i\,, \, \ell_j>0\), independent of \(h\), such that
for every adjacent pair \((\Omega_i,\Omega_j)\),
\[
\|\mathcal L_j v_i\|_{a,\Omega_j}^2
\leq
\ell_j h^{-1}
\|v_i\|_{L^2(\Gamma_{ij})}^2,
\qquad
\|\mathcal L_i v_j\|_{a,\Omega_i}^2
\leq
\ell_i h^{-1}
\|v_j\|_{L^2(\Gamma_{ij})}^2,
\]
for all \(v_i\in\mathcal V_{h,i}\) and \(v_j\in\mathcal V_{h,j}\).

Define the local errors
\[
e_{h,l}^{(k)}
:=
u_{h,l}^{(k)} - u_h|_{\Omega_l},
\qquad l\in\{i,j\}.
\]
and let the Robin parameter be defined by \(\rho := \gamma h^{-1}\). If \(\gamma>0\) satisfies
\(
\gamma \geq \ell_j ,
\)
then the Robin--Robin iteration is a contraction in the norm
\[
\|e_h\|_{\star}
:=
\left(
\sum_{l\in\{i,j\}}
\|e_{h,l}\|_{a,\Omega_l}^2
+
\sum_{l\in\{i,j\}}
\frac{\gamma}{2h}
\|e_{h,l}\|_{L^2(\Gamma_{ij})}^2
\right)^{1/2}.
\]
That is, there exists $q<1$ such that
\[
\|e_h^{(k)}\|_{\star}
\leq
q\,\|e_h^{(k-1)}\|_{\star}
\qquad \forall k\geq 1.
\]
Consequently,
\[
\lim_{k\to\infty} e_{h,l}^{(k)}=0,
\qquad l\in\{i,j\},
\]
and the Robin--Robin iterates converge to the global finite element solution.
\end{prop}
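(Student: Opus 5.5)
The plan is to derive the error equations of the block-Jacobi iteration \eqref{eq:fucitonalDD}, test them with the new error, and close a one-step energy inequality. The error recursion is linear and finite-dimensional, so a non-expansive bound that is strict on every nonzero error already yields a uniform factor $q<1$. The constant $q$ may then depend on $h$, which the statement permits.

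\emph{Error equations.} The global solution $u_h\in\mathcal W_h$ satisfies the local problems \eqref{eq:gen_var_i} with its own Robin data. This is exactly the consistency argument leading to \eqref{eq:var_i}--\eqref{eq:var_j}: the flux identity $(k_\Omega\nabla u_{h,j}\cdot\mathbf n_j,v)_{\Gamma_{ij}}=a_j(u_{h,j},\mathcal L_jv)-b_j(\mathcal L_jv)$ holds for the restrictions of $u_h$. Subtracting this from \eqref{eq:fucitonalDD} removes all the $b$ terms and gives, for all $v\in\mathcal V_{h,i}$,
\[
a_i(e_{h,i}^{(k)},v)+\rho(e_{h,i}^{(k)},v)_{\Gamma_{ij}}
=-a_j(e_{h,j}^{(k-1)},\mathcal L_jv)+\rho(e_{h,j}^{(k-1)},\mathcal L_jv)_{\Gamma_{ij}}.
\]
The symmetric equation holds on $\Omega_j$. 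Since $\mathcal L_jv|_{\Gamma_{ij}}=v|_{\Gamma_{ij}}$, the last term equals $\rho(e_{h,j}^{(k-1)},v)_{\Gamma_{ij}}$.

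\emph{Energy estimate.} Take $v=e_{h,i}^{(k)}$ and set $\rho=\gamma/h$. For the $a_j$ term I use Cauchy--Schwarz in the $a_j$-inner product, then the lifting bound $\|\mathcal L_jv\|_{a,\Omega_j}^2\le \ell_jh^{-1}\|v\|^2_{L^2(\Gamma_{ij})}$, then Young's inequality:
\[
|a_j(e_{h,j}^{(k-1)},\mathcal L_je_{h,i}^{(k)})|\le \tfrac12\|e_{h,j}^{(k-1)}\|_{a,\Omega_j}^2+\tfrac{\ell_j}{2h}\|e_{h,i}^{(k)}\|_{L^2(\Gamma_{ij})}^2 .
\]
For the Robin term, Young's inequality gives the bound $\tfrac{\gamma}{2h}\bigl(\|e_{h,j}^{(k-1)}\|^2_{\Gamma_{ij}}+\|e_{h,i}^{(k)}\|^2_{\Gamma_{ij}}\bigr)$. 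Collecting terms,
\[
\|e_{h,i}^{(k)}\|_{a,\Omega_i}^2+\tfrac{\gamma-\ell_j}{2h}\|e_{h,i}^{(k)}\|_{L^2(\Gamma_{ij})}^2
\le \tfrac12\|e_{h,j}^{(k-1)}\|_{a,\Omega_j}^2+\tfrac{\gamma}{2h}\|e_{h,j}^{(k-1)}\|_{L^2(\Gamma_{ij})}^2 .
\]
The hypothesis $\gamma\ge\ell_j$ is used exactly here, to keep the interface coefficient on the left nonnegative. Adding the symmetric inequality for $\Omega_j$ bounds $\sum_l\|e^{(k)}_{h,l}\|^2_{a,\Omega_l}$ plus interface terms by at most $\|e_h^{(k-1)}\|_\star^2$. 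The right-hand side carries only the factor $\tfrac12$ on the energy part, which is where the strict gain comes from.

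\emph{Main obstacle.} The difficulty is recovering the full weight $\tfrac{\gamma}{2h}$ on the new interface terms on the left, since the naive Young splitting leaves only $\tfrac{\gamma-\ell_j}{2h}$. I would first rebalance the Young weights, e.g.\ $ab\le \tfrac{\theta}{2}a^2+\tfrac{1}{2\theta}b^2$ with $\theta$ chosen against the slack $\gamma-\ell_j$ and the factor $\tfrac12$ on the energy part. If that is not enough, I would absorb the missing interface mass into the energy term. This uses the discrete trace inequality $\|v\|^2_{L^2(\Gamma_{ij})}\le C_{\rm tr}h^{-1}\|v\|^2_{L^2(\Omega_i)}$ together with coercivity of $a_i$, at the cost of an $h$-dependent constant. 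Either route gives $\|e_h^{(k)}\|_\star\le\|e_h^{(k-1)}\|_\star$, with equality only if the energy parts vanish. Equality then forces the interface errors to vanish too, through the error equations and coercivity. The error map is linear on a finite-dimensional space and compactness of its unit sphere turns this strict inequality into $\|e_h^{(k)}\|_\star\le q\|e_h^{(k-1)}\|_\star$ with $q<1$. Convergence $e^{(k)}_{h,l}\to0$ then follows by iteration.
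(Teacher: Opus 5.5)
Your error equation and the per-subdomain estimate
\[
\|e_{h,i}^{(k)}\|_{a,\Omega_i}^2+\tfrac{\gamma-\ell_j}{2h}\|e_{h,i}^{(k)}\|_{L^2(\Gamma_{ij})}^2\le\tfrac12\|e_{h,j}^{(k-1)}\|_{a,\Omega_j}^2+\tfrac{\gamma}{2h}\|e_{h,j}^{(k-1)}\|_{L^2(\Gamma_{ij})}^2
\]
are correct. The gap is exactly the ``main obstacle'' you flag, and neither of your repairs closes it. So the non-expansive bound $\|e_h^{(k)}\|_\star\le\|e_h^{(k-1)}\|_\star$, on which your strictness and compactness argument rests, is never established.

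Rebalancing Young's weights cannot work. Use $\theta$ on the $a_j$ term, and split $\tfrac{\gamma}{h}(e_{h,j}^{(k-1)},e_{h,i}^{(k)})_{\Gamma_{ij}}$ with weights $\eta,\eta^{-1}$. The left coefficient of $\|e_{h,i}^{(k)}\|^2_{L^2(\Gamma_{ij})}$ becomes $\tfrac{\gamma}{2h}(2-\eta^{-1})-\tfrac{\ell_j}{2\theta h}$, while the right coefficient of $\|e_{h,j}^{(k-1)}\|^2_{L^2(\Gamma_{ij})}$ is $\tfrac{\gamma}{2h}\eta$. Since $\eta+\eta^{-1}\ge2$, the new interface weight is strictly smaller than the old one for every choice. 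Letting $\theta\to\infty$ only inflates the energy coefficient $\tfrac{\theta}{2}$.

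Absorbing the deficit through a trace inequality fails as well. It bounds $\tfrac{\ell_j}{2h}\|e_{h,i}^{(k)}\|^2_{L^2(\Gamma_{ij})}$ by $C_h\|e_{h,i}^{(k)}\|^2_{a,\Omega_i}$ with $C_h\sim\ell_j h^{-1}$, or even $h^{-2}$ with the discrete trace inequality you quote. You therefore only get $\|e_h^{(k)}\|_\star^2\le(1+C_h)\bigl(\tfrac12 A^{(k-1)}+B^{(k-1)}\bigr)$, where $A,B$ are the energy and interface parts of $\|\cdot\|_\star^2$. Since $\tfrac12 A+B\ge\tfrac12\|e_h\|_\star^2$, this is not even non-expansive once $C_h>1$. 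In the admissible borderline case $\gamma=\ell_j$, your left-hand side contains no interface term at all.

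The missing idea is to spend $\gamma\ge\ell_j$ on the \emph{jump} $e_{h,i}^{(k)}-e_{h,j}^{(k-1)}$, not on the new trace $e_{h,i}^{(k)}$. The paper writes the Robin term via the polarization identity,
\[
\tfrac{\gamma}{h}\bigl(e_{h,i}^{(k)}-e_{h,j}^{(k-1)},e_{h,i}^{(k)}\bigr)_{\Gamma_{ij}}=\tfrac{\gamma}{2h}\Bigl(\|e_{h,i}^{(k)}\|^2_{L^2(\Gamma_{ij})}-\|e_{h,j}^{(k-1)}\|^2_{L^2(\Gamma_{ij})}+\|e_{h,i}^{(k)}-e_{h,j}^{(k-1)}\|^2_{L^2(\Gamma_{ij})}\Bigr).
\]
It then splits $e_{h,j}^{(k-1)}=\mathcal L_j e_{h,j}^{(k-1)}+(I-\mathcal L_j)e_{h,j}^{(k-1)}$ and polarizes $a_j(\mathcal L_j e_{h,j}^{(k-1)},\mathcal L_j e_{h,i}^{(k)})$ as well. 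The lifting bound is applied only to $\mathcal L_j(e_{h,j}^{(k-1)}-e_{h,i}^{(k)})$. This gives $\tfrac{\ell_j}{2h}\|e_{h,i}^{(k)}-e_{h,j}^{(k-1)}\|^2_{L^2(\Gamma_{ij})}$, which the Robin jump term absorbs. The term $\tfrac12\|\mathcal L_j e_{h,i}^{(k)}\|^2_{a,\Omega_j}$ produced by polarization cancels the one from Young's inequality on the remaining cross term $a_j\bigl((I-\mathcal L_j)e_{h,j}^{(k-1)},\mathcal L_j e_{h,i}^{(k)}\bigr)$. The result keeps the full interface weight:
\[
\|e_{h,i}^{(k)}\|^2_{a,\Omega_i}+\tfrac{\gamma}{2h}\|e_{h,i}^{(k)}\|^2_{L^2(\Gamma_{ij})}\le\tfrac12\|e_{h,j}^{(k-1)}\|^2_{a,\Omega_j}+\tfrac{\gamma}{2h}\|e_{h,j}^{(k-1)}\|^2_{L^2(\Gamma_{ij})}.
\]

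Note that this step also uses $\|(I-\mathcal L_j)e_{h,j}^{(k-1)}\|_{a,\Omega_j}\le\|e_{h,j}^{(k-1)}\|_{a,\Omega_j}$. That bound is not among the listed hypotheses, but it holds when $\mathcal E_j$ is the discrete $a_j$-harmonic extension.

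Once this inequality is available, your closing step is the right way to get a $k$-independent $q<1$: strict decrease on nonzero errors plus compactness, or directly $B\le Ch^{-1}A$ from a trace inequality. That is more careful than the paper's iteration-dependent factor $\sqrt{1-\theta_{k-1}/2}$.
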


\bigskip

\textit{Proof}.  We start noting that the exact solution of the global problem, $u_h$, is such that 
\begin{equation}
\begin{split}
    &a_i(u_{h}|_{\Omega_i}, v_{h,i}) +\gamma h^{-1}(u_{h}|_{\Omega_i}, v_{h,i})_{\Gamma_{ij}} =\\ &b_i(v_{h,i}) +  b_j(\mathcal{L}_j\,v_{h,i})-a_j(u_{h}|_{\Omega_j}, \mathcal{L}_j\,v_{h,i}) + \gamma h^{-1}(u_{h}|_{\Omega_j}, \mathcal{L}_j\,v_{h,i})_{\Gamma_{ij}} 
\end{split}
\end{equation}
and that a completely analogous relation holds for $\Omega_j$. Here, the Robin parameter
has been written as \(\rho := \gamma/h\).
Now, subtract the previous identity from the variational equation on the subdomain $\Omega_i$ induced by the RR iteration, namely
\begin{equation}
\begin{split}
    &a_i(u_{h,i}^{(k)}, v_{h,i}) +\gamma h^{-1}(u_{h,i}^{(k)}, v_{h,i})_{\Gamma_{ij}} =\\ &b_i(v_{h,i}) +  b_j(\mathcal{L}_j\,v_{h,i})-a_j(u_{h,j}^{(k-1)}, \mathcal{L}_j\,v_{h,i}) + \gamma h^{-1}(u_{h,j}^{(k-1)}, \mathcal{L}_j\,v_{h,i})_{\Gamma_{ij}} 
\end{split}
\end{equation}
yielding
\begin{equation}
\begin{split}
    &a_i(e_{h,i}^{(k)} , v_{h,i}) +\gamma h^{-1}(e_{h,i}^{(k)}, v_{h,i})_{\Gamma_{ij}}  =-a_j(e_{h,j}^{(k-1)}, \mathcal{L}_j\,v_{h,i}) + \gamma h^{-1}(e_{h,j}^{(k-1)}, \mathcal{L}_j\,v_{h,i})_{\Gamma_{ij}} 
\end{split}
\end{equation}

We are interested in analyzing the evolution of the error between the iterative and exact solutions, $e_{h,i}^{(k)}={u}_{h,i}^{(k)}-u_h|_{\Omega_i}$, as the iteration index $k$ increases.

By considering $v_{h,i}=e_{h,i}^{(k)}$ as particular test function, we write
\begin{align}
\label{eq:DDproof_starting-point}
a_i\bigl(e_{h,i}^{(k)},e_{h,i}^{(k)}\bigr)
\;&
\;+\;\gamma\,h^{-1}\bigl(e_{h,i}^{(k)}-e_{h,j}^{(k-1)},e_{h,i}^{(k)}\bigr)_{\Gamma_{ij}}
\;
+\,a_j\bigl(e_{h,j}^{(k-1)},\mathcal L_j\,e_{h,i}^{(k)}\bigr)=0\;;
\end{align}
an analogous relation holds, in a specular manner, for the subdomain $\Omega_j$.

At this stage, we recall the following identity, valid in any real inner product space:
\[
\langle v - w,\, v \rangle
= \frac{1}{2} \|v\|^{2} - \frac{1}{2} \|w\|^{2} + \frac{1}{2} \|v - w\|^{2}, 
\qquad \forall\, v, w \in \mathcal{V}_h.
\]

Subsequently, the second last term of ~\eqref{eq:DDproof_starting-point} can be expressed by
\[
\;\gamma\,h^{-1}\bigl(e_{h,i}^{(k)}-e_{h,j}^{(k-1)},e_{h,i}^{(k)}\bigr)_{\Gamma_{ij}}=\frac12\frac{\gamma} {h}\Bigl(\|e_{h,i}^{(k)}\|^2_{L^2(\Gamma_{ij})}-\|e_{h,j}^{(k-1)}\|^2 _{L^2(\Gamma_{ij})}+\|e_{h,i}^{(k)}-e_{h,j}^{(k-1)}\|^2_{L^2(\Gamma_{ij})} \Bigr).
\]
For the last term of~\eqref{eq:DDproof_starting-point}, we write 
\[
\begin{aligned}
a_j\bigl(e_{h,j}^{(k-1)},\,\mathcal{L}_j e_{h,i}^{(k)}\bigr)
&= a_j\bigl(e_{h,j}^{(k-1)}-\mathcal{L}_j e_{h,i}^{(k)},\,\mathcal{L}_j e_{h,i}^{(k)}\bigr)
   + a_j\bigl(\mathcal{L}_j e_{h,i}^{(k)},\,\mathcal{L}_j e_{h,i}^{(k)}\bigr) \\[10pt]
&= a_j\bigl(\mathcal{L}_j e_{h,j}^{(k-1)}-\mathcal{L}_j e_{h,i}^{(k)},\,\mathcal{L}_j e_{h,i}^{(k)}\bigr)
   + a_j\bigl(\mathcal{L}_je_{h,i}^{(k)},\,\mathcal{L}_j e_{h,i}^{(k)}\bigr)+a_j((I-\mathcal{L}_j)e_{h,j}^{(k-1)}, \mathcal{L}_je_{h,i}^{(k)}) \\[10pt]
&= \tfrac12\,a_j\bigl(\mathcal{L}_j e_{h,j}^{(k-1)},\,\mathcal{L}_j e_{h,j}^{(k-1)}\bigr)
   - \tfrac12\,a_j\bigl(\mathcal{L}_j e_{h,i}^{(k)},\,\mathcal{L}_j e_{h,i}^{(k)}\bigr) 
   \\[4pt]
&\qquad\quad+ \tfrac12\,a_j\bigl(\mathcal{L}_j e_{h,j}^{(k-1)}-\mathcal{L}_j e_{h,i}^{(k)},\,
                       \mathcal{L}_j e_{h,j}^{(k-1)}-\mathcal{L}_j e_{h,i}^{(k)}\bigr)\\[4pt]
&\qquad\quad
   + a_j\bigl(\mathcal{L}_j e_{h,i}^{(k)},\,\mathcal{L}_j e_{h,i}^{(k)}\bigr) +a_j((I-\mathcal{L}_j)e_{h,j}^{(k-1)}, \mathcal{L}_je_{h,i}^{(k)}) \\[10pt]
&= \tfrac12\,a_j\bigl(\mathcal{L}_j e_{h,j}^{(k-1)},\,\mathcal{L}_j e_{h,j}^{(k-1)}\bigr)
   + \tfrac12\,a_j\bigl(\mathcal{L}_j e_{h,i}^{(k)},\,\mathcal{L}_j e_{h,i}^{(k)}\bigr) \\[4pt]
&\qquad\quad
   + \tfrac12\,a_j\bigl(\mathcal{L}_j e_{h,j}^{(k-1)}-\mathcal{L}_j e_{h,i}^{(k)},\,
                       \mathcal{L}_j e_{h,j}^{(k-1)}-\mathcal{L}_j e_{h,i}^{(k)}\bigr)+a_j((I-\mathcal{L}_j)e_{h,j}^{(k-1)}, \mathcal{L}_je_{h,i}^{(k)}) 
\end{aligned}
\]

At this point, equality \eqref{eq:DDproof_starting-point} reduces to  

\begin{align*}
a_i\bigl(e_{h,i}^{(k)},e_{h,i}^{(k)}\bigr)&+    \tfrac{1}{2}\tfrac{\gamma} {h}\Bigl(\|e_{h,i}^{(k)}\|^2_{L^2(\Gamma_{ij})}-\|e_{h,j}^{(k-1)}\|^2 _{L^2(\Gamma_{ij})}+\|e_{h,i}^{(k)}-e_{h,j}^{(k-1)}\|^2_{L^2(\Gamma_{ij})} \Bigr)  \\[7pt]&\tfrac12\,a_j\bigl(\mathcal{L}_j e_{h,j}^{(k-1)},\,\mathcal{L}_j e_{h,j}^{(k-1)}\bigr)
   +\tfrac12 a_j({\mathcal{L}}_je_{h,i}^{(k)},{\mathcal{L}}_je_{h,i}^{(k)}) \\[4pt] &\hspace{0pt}+ \tfrac12\,a_j\bigl(\mathcal{L}_j e_{h,j}^{(k-1)}-\mathcal{L}_j e_{h,i}^{(k)},\,
                       \mathcal{L}_j e_{h,j}^{(k-1)}-\mathcal{L}_j e_{h,i}^{(k)}\bigr) +a_j((I-\mathcal{L}_j)e_{h,j}^{(k-1)}, \mathcal{L}_je_{h,i}^{(k)})=0.
\end{align*}

By the fact that the bilinear form $a_j(\cdot, \cdot)$ is SPD (it defines the inner product $\|\cdot\|_{a, \Omega_j}$) and the properties of the operator $\mathcal{L}_j$, we can write the bounds
\begin{align*}
\Bigl|\tfrac12\,a_j\bigl(\mathcal{L}_j e_{h,j}^{(k-1)}-\mathcal{L}_j e_{h,i}^{(k)},\,
                       \mathcal{L}_j e_{h,j}^{(k-1)}-\mathcal{L}_j e_{h,i}^{(k)}\bigr)\Bigr| &=  \tfrac{1}{2}\|\mathcal{L}_j e_{h,j}^{(k-1)}-\mathcal{L}_j e_{h,i}^{(k)}\|^2_{a,\Omega_j} \\[10pt] &\leq \tfrac{1}{2h}\ell_{j}\|e_{h,j}^{(k-1)}- e_{h,i}^{(k)}\|^2_{L^2(\Gamma_{ij})}   
\end{align*}
and
\begin{align*}
    |a_j((I-\mathcal{L}_j)e_{h,j}^{(k-1)}, \mathcal{L}_je_{h,i}^{(k)})| &\leq \|(I-\mathcal{L}_j)e_{h,j}^{(k-1)}\|_{a,\Omega_j}\,\|\mathcal{L}_je_{h,i}^{(k)}\|_{a,\Omega_j} \\ &\leq \|e_{h,j}^{(k-1)}\|_{a,\Omega_j}\|\mathcal{L}_j e_{h,i}^{(k)}\|_{a,\Omega_j}\leq \tfrac12\|e_{h,j}^{(k-1)}\|^2_{a,\Omega_j} + \tfrac12 \|\mathcal{L}_j e_{h,i}^{(k)}\|^2_{a,\Omega_j}
\end{align*}
then
\begin{align*}
a_i\bigl(e_{h,i}^{(k)},e_{h,i}^{(k)}\bigr)&+    \tfrac12\tfrac{\gamma} {h}\Bigl(\|e_{h,i}^{(k)}\|^2_{L^2(\Gamma_{ij})}-\|e_{h,j}^{(k-1)}\|^2 _{L^2(\Gamma_{ij})}+\|e_{h,i}^{(k)}-e_{h,j}^{(k-1)}\|^2_{L^2(\Gamma_{ij})} \Bigr)  \\[7pt]&+ \tfrac12\,a_j\bigl(\mathcal{L}_j e_{h,j}^{(k-1)},\,\mathcal{L}_j e_{h,j}^{(k-1)}\bigr)
   + \tfrac12\,a_j\bigl({\mathcal{L}}_j e_{h,i}^{(k)},\,{\mathcal{L}}_j e_{h,i}^{(k)}\bigr) 
    \\[7pt] & \leq \tfrac12\tfrac{1}{h}\ell_{j}\|e_{h,j}^{(k-1)}- e_{h,i}^{(k)}\|^2_{L^2(\Gamma_{ij})}+\tfrac12\|e_{h,j}^{(k-1)}\|^2_{a,\Omega_j} + \tfrac12 \|\mathcal{L}_j e_{h,i}^{(k)}\|^2_{a,\Omega_j}.
\end{align*}

Now, if the parameter $\gamma > 0$  is chosen such that 
\(
\gamma \geq \ell_{j} 
\),
then, expressing $a_j(v,v)$ as $\|v\|^2_{a, \Omega_j}$, we obtain
\begin{align*}
\|e_{h,i}^{(k)}\|^2_{a,\Omega_i}+\tfrac12\,\|\mathcal{L}_j e_{h,j}^{(k-1)}\|^2_{a,\Omega_j} 
   &+ \tfrac12\,\|{\mathcal{L}}_j e_{h,i}^{(k)}\|^2_{a,\Omega_j} 
\;+    \tfrac12\tfrac{\gamma} {h}\|e_{h,i}^{(k)}\|^2_{L^2(\Gamma_{ij})}  \\& \leq \tfrac12\tfrac{\gamma} {h}\|e_{h,j}^{(k-1)}\|^2 _{L^2(\Gamma_{ij})}+\tfrac12\|e_{h,j}^{(k-1)}\|^2_{a,\Omega_j} + \tfrac12 \|\mathcal{L}_j e_{h,i}^{(k)}\|^2_{a,\Omega_j}.
\end{align*}
By completely analogous steps, we obtain
\begin{align*}
\|e_{h,j}^{(k)}\|^2_{a,\Omega_j}
+ \tfrac12\,\|\mathcal{L}_i e_{h,i}^{(k-1)}\|^2_{a,\Omega_i}
&+ \tfrac12\,\|\mathcal{L}_i e_{h,j}^{(k)}\|^2_{a,\Omega_i}
+ \tfrac12\,\tfrac{\gamma}{h}\|e_{h,j}^{(k)}\|^2_{L^2(\Gamma_{ij})}
\\
&\le\;
\tfrac12\,\tfrac{\gamma}{h}\|e_{h,i}^{(k-1)}\|^2_{L^2(\Gamma_{ij})}
+ \tfrac12\,\|e_{h,i}^{(k-1)}\|^2_{a,\Omega_i}
+ \tfrac12\,\|\mathcal{L}_i e_{h,j}^{(k)}\|^2_{a,\Omega_i}.
\end{align*}
At this point, estimates above-expressed respectively on $\Omega_i$ and $\Omega_j$—are added together and like terms can be grouped, leading to 
\begin{equation}
\label{eq:start_for_convergence}
\sum_{l \in \{i,j\} }\|e_{h,l}^{(k)}\|^2_{a,\Omega_l}+\sum_{l \in \{i,j\} } \tfrac12\,\tfrac{\gamma}{h}\|e_{h,l}^{(k)}\|^2_{L^2(\Gamma_{ij})} \leq \sum_{l \in \{i,j\} } \tfrac12\,\|e_{h,l}^{(k-1)}\|^2_{a,\Omega_l}+\tfrac12\,\tfrac{\gamma}{h}\|e_{h,l}^{(k-1)}\|^2_{L^2(\Gamma_{ij})}.
\end{equation}
Having defined the norm  
\[
\|v\|_{\star,} := \left( \sum_{l \in \{i,j\}}\|v_l\|^2_{a,\Omega_l} + \sum_{l \in \{i,j\}}\tfrac{1}{2}\,\tfrac{\gamma}{h}\,\|v_l\|^2_{L^2(\Gamma_{ij})} \right)^{1/2},
\]
it is noteworthy that the inequality above can be expresses as a contraction, namely:
\begin{equation}
\label{eq:short_contraction}
\|e_h^{(k)}\|_{\star} \leq q\,\|e_h^{(k-1)}\|_{\star}, \qquad q \in  \left[\tfrac{1}{\sqrt{2}},1 \right)
\end{equation}
Indeed, inequality above can be expressed as
\[
\|e_h^{(k)}\|_* \le \sqrt{\tfrac12\sum_{l \in \{i,j\}}\|e_{h,l}^{(k-1)}\|^2_{a,\Omega_l} + \sum_{l \in \{i,j\}}\tfrac{\gamma}{2h}\|e_{h,l}^{(k-1)}\|^2_{L^2(\Gamma_{ij})}}= \sqrt{\tfrac{1}{2}A+B},
\]
having set
\[
A := \sum_{l \in \{i,j\}}\|e_{h,l}^{(k-1)}\|^2_{a,\Omega_l}, \qquad B := \sum_{l \in \{i,j\}}\tfrac{\gamma}{2h}\|e_{h,l}^{(k-1)}\|^2_{L^2(\Gamma_{ij})}.
\]
Given the identity
\[
\|e_{h}^{k-1}\|_*^2 = A + B,
\]
and introducing the parameter
\[
\theta_{k-1} :=  \frac{A}{A+B} \in (0,1],
\]
then:
\[
\tfrac12 A + B
= \left(\tfrac12\frac{A}{A+B} + \frac{B}{A+B}\right)(A+B)
= \left(1 - \frac{\theta_{k-1}}{2}\right)\|e^{k-1}_h\|_*^2.
\]
Substituting the equality above in into~\eqref{eq:short_contraction} gives
\[
\|e^{(k)}_h\|_*
\;\le\;
\sqrt{1 - \frac{\theta_{k-1}}{2}}\;\|e^{(k-1)}_h\|_*.
\tag{3}
\]
Since \(0 < \theta_{k-1} \leq 1\), we have
\[
\frac{1}{\sqrt{2}} \;\le\; q \;<\; 1.
\]
Having $q < 1$,implies that the error vanishes in the limit \(k \to \infty\), thus concluding the proof. \hfill $\square$

\end{document}